\numberwithin{equation}{section}
\def\R{\mathbb R}
\def\C{\mathbb C}
\def\P{\mathcal P}
\def\d{\rm d}
\def\H{\mathcal H}
\newtheorem{theorem}{Theorem}[section]
\newtheorem{lemma}{Lemma}[section]
\newtheorem{proposition}{Proposition}[section]
\begin{document}

\title{Existence of global solutions to the nonlocal Schr\"{o}dinger equation on the line }
\author{Yi ZHAO$^1$ and Engui FAN$^{1}$\thanks{\ Corresponding author and email address: faneg@fudan.edu.cn } }
\footnotetext[1]{ \  School of Mathematical Sciences, Fudan University, Shanghai 200433, P.R. China. }

\date{ }

\maketitle
\begin{abstract}
\baselineskip=18pt
In this paper, we  address   the existence of global solutions to the Cauchy problem for the integrable nonlocal nonlinear Schr\"{o}dinger (nonlocal NLS)
equation with the initial data $q_0(x)\in H^{1,1}(\R)$ with the $L^1(\R)$  small-norm assumption. 
 We rigorously  show that  the spectral problem for the nonlocal NLS equation  admits no eigenvalues or resonances, as well as Zhou vanishing lemma 
 is effective  under the $L^1(\R)$  small-norm assumption. 
  With inverse scattering theory and the Riemann-Hilbert approach,
 we rigorously establish  the bijectivity  and Lipschitz continuous  of the direct and inverse scattering map  from the initial data to reflection coefficients.
By using reconstruction formula and the   Plemelj  projection  estimates of reflection coefficients,
we  further obtain  the existence of the local solution and the  priori estimates, which assure
   the existence of the global solution to the Cauchy problem for the nonlocal NLS equation.\\[6pt]
{\bf Keywords:}  Nonlocal nonlinear Schr\"{o}dinger equation; Riemann-Hilbert problem; global solution;  Plemelj  projection operator; Lipschitz continuous.
\\[6pt]
\noindent {\bf MSC 2020:} 35Q51; 35Q55; 35Q15; 37K15;  35A01; 35G25.

\end{abstract}

\baselineskip=18pt

\newpage

\tableofcontents

\section{ Introduction}

In this paper, we show  the existence of global solutions to the Cauchy problem for the integrable nonlocal nonlinear Schr\"{o}dinger (nonlocal NLS) equation
\begin{align}
&iq_t(x,t)+q_{xx}(x,t)+2\sigma q^2(x,t)\bar{q}(-x,t)=0,\ \sigma=\pm 1,\  x\in\R,\ t>0\label{nnls}\\
&q(x,0)=q_0(x),\label{initial}
\end{align}
where the subscripts denote partial derivatives and the bar denotes complex conjugation throughout the article, and $q(x,t)$ is a complex valued function of the real variables $x$ and $t$.


 The nonlocal  NLS  equation (\ref{nnls})  was first introduced as an  integrable model  by Ablowitz and Musslimani in 2013 \cite{5}, and
  they further obtain its  Lax pair,  an infinite number of conservation laws and is  PT  symmetry.
 For    rapidly decaying initial data and  initial value with  nonzero boundary conditions, the  soliton solutions to the nonlocal  NLS   equation (\ref{nnls}) were obtained via the inverse scattering transform , respectively \cite{5,16}.
 In fact,  the nonlocal  NLS   equation ever  was   derived in a physical application of magnetics  \cite{10}.
 The  nonlocal  NLS  equation  (\ref{nnls}) also  can  be regarded  as a linear Schr\"{o}dinger equation
\begin{equation}
    iq_t(x;t)=q_{xx}(x,t)+V[q,x,t]q(x,t)
\end{equation}
with a self-induced potential $V[q,x,t]=V^*[q,-x,t]$, thus the  nonlocal  NLS   equation  (\ref{nnls}) is a PT symmetric equation \cite{6,9}.
Since PT symmetric systems allow for loseless-like propagation due to their balance of gain and loss \cite{7},
 they have attracted considerable attention in recent years.  The PT symmetric system is a key model on linear and nonlinear waves \cite{8} and related to the cutting edge research area of modern physics \cite{7,9}.
Besides, the nonlocal  NLS  equation is gauge-equivalent to  the unconventional system of coupled Landau-Lifchitz equations and therefore can be useful in the physics of nanomagnetic
 artificial materials \cite{8}. Possible application of the nonlocal NLS equation  is discussed in the context of Alice-Bob systems \cite{11,12}.

Related to the nonlocal  NLS  equation, Fokas analyzed a (2+1) dimensional integrable nonlocal NLS equation \cite{13}.
 Ablowitz introduced new reverse space-time and reverse time nonlocal nonlinear integrable
equations and their discrete version \cite{14}. They identified new nonlocal symmetry reductions for the general AKNS system and addressed the scattering problem.  Besides, an integrable discrete PT symmetric "discretization" of the nonlocal NLS equation was obtained from a new nonlocal PT symmetric reduction of the Ablowitz-Ladik scattering problem \cite{15}.  Nonlocal versions of some other integrable equations such as  the modified KdV equation and sine-Gordan equation were investigated \cite{14}.

 The long-time behaviour of the solution to the nonlocal NLS equation (\ref{nnls}) with decaying boundary conditions was investigated via the nonlinear
 Deift-Zhou steepest-decent method \cite{Rybalko1}.
  Recently,   the long-time behaviour for  nonlocal NLS equation (\ref{nnls}) with step-like initial data
 was obtained \cite{Rybalko2,Rybalko3}.
  Comparing with the classical NLS equation \cite{NLS1,NLS2,NLS3,NLS4,NLS5,NLS6,NLS7,NLS8},
the   nonlocal  NLS   equation (\ref{nnls})  displays   some  different characteristics and
 interesting properties   both  on  exact solutions and long time asymptotic  behavior. For weighted Sobolev initial data $q_0(x)\in H^{1,1}(\mathbb{R})$,
 we obtained  long time asymptotic  behavior for  the nonlocal NLS  equation (\ref{nnls}) in  solitonic region \cite{LYF}. However,
the global existence of  the   nonlocal  NLS equation (\ref{nnls})   still  has been    unknown.
 It is well-known that the existence of global solution or well-posedness of initial value problem of a partial
 differential equation is the theoretical guarantee to the long time asymptotic analysis. In general,   for  the initial data
in  a Sobolev space,  the solution obtained through the inverse scattering method  exists  in a larger Sobolev space.
Zhou ever established  $L^2$-Sobolev space bijectivity for the
scattering-inverse scattering transforms associated with the ZS-AKNS system \cite{zhouxin}.  Recently Pelinovsky and  Shimabukuro
established  the  existence of global solutions for
the Cauchy problem for derivative NLS equation with inverse scattering method \cite{18}.

In our paper, we  try to   rigorously establish   bijectivity for the
scattering-inverse scattering transforms associated with  nonlocal NLS equation (\ref{nnls}), and  further
show   the global existence for  the initial value problem    (\ref{nnls})-(\ref{initial})
with inverse scattering transform  and Riemann-Hilbert
   (RH)  method.
  Starting from the given initial data, the direct transform gives rise to the scattering data. Then, the inverse scattering transform   goes  back  to the solution to the original Cauchy problem  (\ref{nnls})-(\ref{initial}) based on the associated RH problem.

The structure of the paper is as follows. In Section \ref{sec:section2}, we present the direct scattering
 transform to the initial value problem  (\ref{nnls})-(\ref{initial})  based on its Lax pair.  The analytical, asymptotic and integrability for
  the Jost functions and  the scattering coefficients are analysed in details.  We establish the Lipschitz continuous mapping from the initial data to the reflection coefficients.
 In Section \ref{sec:section3}, we carry out the inverse scattering transform. We set up the RH problem and prove its existence and uniqueness via a general vanishing lemma under the $L^1(\R)$  small-norm. In Section \ref{sec:section4}, we reconstruct  and estimate the potential associated with the solutions of the RH problem and reflection coefficients.
 Further, we establish the Lipschitz continuous mapping from the reflection coefficients to the potential. In Section \ref{sec:section5}, we perform the time evolution
 od reflection coefficients and RH problem  and further prove the existence of the local solution and global solution to the initial value problem  (\ref{nnls})-(\ref{initial}).
\section{ Direct scattering transform}
\label{sec:section2}
\subsection{Some notations}
We first fix some notations used in this paper:
\begin{itemize}
\item{} If $I$ is an interval on the real line $\mathbb{R}$ and $X$ is a  Banach space, then $C (I,X)$ denotes the space of continuous functions on $I$ taking values in $X$. It is equipped with the norm
\begin{equation*}
	\|f\|_{C (I, X)}=\sup _{x \in I}\|f(x)\|_{X}.
\end{equation*}
\item{} $H^m(\R)$ denotes the Sobolev space of distributions with square integrable derivatives up to the order m.
\item{} $L^{2,m}(\R)$ denotes the weighted $L^2(\R)$ space with the norm
$$\| q\|_{L^{2,m}(\R)} =\left(\int_{\R} \langle x\rangle^{2m}|q(x)|^2\d x \right)^{1/2}$$
where $\langle x\rangle=(1+x^2)^{1/2}$.
\item{} $H^{1,1}(\R)$ denotes the weighted Sobolev space
    $$H^{1,1}(\R)=\left\{q|q\in L^{2,1}(\R), \partial_x q\in L^{2,1}(\R)\right\}.$$
\item{} $\mathcal H(\R)$ denotes the function space
$$\H(\R)=\left\{f(x)|f(x)\in H^1(\R)\cap L^{2,1}(\R),xf(x)\in L^{\infty}(\R)\right\}$$
with the norm
$$\|f(x) \|_{\H(\R)}=\| f(x)\|_{H^1(\R)\cap L^{2,1}(\R)}+\| xf(x)\|_{L^{\infty}(\R)}.$$
\end{itemize}

\subsection{Jost functions and Lipschitz continuous}

The nonlocal NLS equation is integrable and admits the Lax pair\cite{5}
\begin{align}
    &\Phi_x+ik\sigma_3\Phi=U\Phi,\label{spectral}\\
    &\Phi_t+2ik^2\sigma_3\Phi=V\Phi,\label{3}
\end{align}
where the function  $\Phi(x,t;k)$ is a matrix-valued function, $k$ is a spectral parameter, and the matrices U and V are given by
\begin{equation}
U=\left(\begin{array}{cc}
 0 & q(x,t)\\
 -\sigma\bar{q}(-x,t) & 0\end{array}\right),\quad V=\left(\begin{array}{cc}
 A & B\\
 C & -A\end{array}\right),
\nonumber
\end{equation}
with
\begin{equation}
\begin{split}
    &A=i\sigma q(x,t)\bar{q}(-x,t), \\
    &B=2kq(x,t)+iq_x(x,t),\\
    &C=-2k\sigma\bar{q}(-x,t)+i\sigma(\bar{q}(-x,t))_x.
\end{split}
\nonumber
\end{equation}
The matrix $\sigma_3$ is the standard Pauli matrix
$$\sigma_3=\left( \begin{array}{cc}
1 & 0\\
0 & -1
\end{array}\right).$$

By a normalized transformation
\begin{equation}
    \varphi_{\pm}(x;k)=[\Phi]_1(x;k)e^{ikx},\quad \phi_{\pm}(x;k)=[\Phi]_2(x;k)e^{-ikx},
    \nonumber
\end{equation}
where $[\Phi]_1$ and $[\Phi]_2$ denote the first  and second column of the matrix $\Phi$, we have
\begin{equation}
\begin{split}
    &\varphi_{\pm}(x;k)\rightarrow e_1\quad as \quad x\rightarrow\pm\infty, \\
    &\phi_{\pm}(x;k)\rightarrow e_2\quad as \quad x\rightarrow\pm\infty,\\
\end{split}
   \label{xasy}
\end{equation}
and the Jost functions $\varphi_{\pm}(x;k)$ and $\phi_{\pm}(x;k)$ satisfy the Volttera's integral equation
    \begin{align}
    &\varphi_{\pm}(x;k)=e_1+\int_{\pm\infty}^x {\rm diag}(1,e^{2ik(x-y)})
    U(q(y))\varphi_{\pm}(y;k)dy,\label{5}\\
     &\phi_{\pm}(x;k)=e_2+\int_{\pm\infty}^x {\rm diag}(e^{-2ik(x-y)},1)
    U(q(y))\phi_{\pm}(y;k)dy.\label{6}
    \end{align}

\begin{lemma}\label{2.1}  If  $q\in L^{1}(\R)$ and $\| q\|_{L^1(\R)}<1$, then or every $k\in\R$, there exist unique solutions $\varphi_{\pm}(\cdot;k)\in L^{\infty}(\R)$ and $\phi_{\pm}(\cdot;k)\in L^{\infty}(\R)$ of the integral equations (\ref{5}) and (\ref{6}), respectively. Moreover, for every $x\in\R$, $\varphi_-(x;\cdot)$ and $\phi_+(x;\cdot)$ are continued analytically in $\C^+$, whereas $\varphi_+(x;\cdot)$ and $\phi_-(x;\cdot)$ are continued analytically in $\C^-$.
\end{lemma}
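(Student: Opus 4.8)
The plan is to treat the Volterra integral equations (\ref{5}) and (\ref{6}) via a standard contraction-mapping/Neumann-series argument, exploiting the smallness of $\|q\|_{L^1(\R)}$ to get the fixed point in $L^\infty(\R)$, and then to establish analyticity by examining how the exponential kernels decay in the appropriate half-planes. I would focus on $\varphi_-(x;k)$ since the other three cases are entirely analogous (with $\pm\infty$ as the base point of integration and the roles of $\C^+,\C^-$ interchanged accordingly). First I would define the integral operator
\begin{equation*}
(K\varphi)(x;k)=\int_{-\infty}^x \mathrm{diag}\bigl(1,e^{2ik(x-y)}\bigr)\,U(q(y))\,\varphi(y;k)\,\d y,
\end{equation*}
so that (\ref{5}) reads $\varphi_-=e_1+K\varphi_-$, and the existence/uniqueness question becomes the invertibility of $I-K$ on $L^\infty(\R)$.

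The key estimate is that for $k\in\R$ the exponential factor $e^{2ik(x-y)}$ has modulus one on the integration range $y\le x$, so the off-diagonal entries of $\mathrm{diag}(1,e^{2ik(x-y)})U(q(y))$ are bounded in absolute value by $|q(y)|$ or $|\bar q(-y)|=|q(-y)|$. Hence one obtains the pointwise bound
\begin{equation*}
\|(K\varphi)(\cdot;k)\|_{L^\infty(\R)}\le \int_{-\infty}^{\infty}\bigl(|q(y)|+|q(-y)|\bigr)\,\d y\;\|\varphi(\cdot;k)\|_{L^\infty(\R)}
= 2\|q\|_{L^1(\R)}\,\|\varphi(\cdot;k)\|_{L^\infty(\R)}.
\end{equation*}
I would then iterate this bound: the $n$-th iterate $K^n$ satisfies $\|K^n\|\le (2\|q\|_{L^1})^n/n!$ by the usual ordering argument on the nested simplex $\{-\infty<y_n<\cdots<y_1<x\}$, which makes the Neumann series $\sum_{n\ge0}K^n e_1$ converge absolutely in $L^\infty(\R)$ for \emph{any} value of $\|q\|_{L^1}$ and in particular establishes existence and uniqueness; the hypothesis $\|q\|_{L^1}<1$ is then what makes the geometric bound $\|(I-K)^{-1}\|\le(1-2\|q\|_{L^1})^{-1}$ usable for later quantitative estimates. (One should double-check whether the intended smallness constant is $1$ or $1/2$ given the factor $2$ above; this is a routine bookkeeping point rather than a genuine obstacle.)

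For the analyticity claim, I would observe that in $\varphi_-$ the only $k$-dependence sits in the factor $e^{2ik(x-y)}$ with $y\le x$, so $x-y\ge0$. Writing $k=\mathrm{Re}\,k+i\,\mathrm{Im}\,k$ gives $|e^{2ik(x-y)}|=e^{-2\,\mathrm{Im}(k)(x-y)}$, which stays bounded (indeed decays) precisely when $\mathrm{Im}\,k\ge0$, i.e.\ $k\in\C^+$. Thus the same Neumann series converges uniformly on compact subsets of $\C^+$; since each term is an entire function of $k$ (being an integral of analytic integrands against an $L^1$ measure in $y$), Morera's theorem or uniform-limit analyticity yields that $\varphi_-(x;\cdot)$ extends analytically to $\C^+$. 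The identical reasoning applied to $\phi_+$ (base point $+\infty$, factor $e^{-2ik(x-y)}$ with $y\ge x$) gives analyticity in $\C^+$, while $\varphi_+$ and $\phi_-$ produce the opposite sign of $x-y$ and hence analyticity in $\C^-$.

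I expect the main obstacle to be bookkeeping rather than conceptual: one must verify that the reversed sign of the nonlocal potential $-\sigma\bar q(-x,t)$ does not spoil the $L^1$ bound (it does not, since $\|q(-\cdot)\|_{L^1}=\|q\|_{L^1}$), and one must keep careful track of which column/base-point pairing lands in which half-plane. The only genuinely delicate point is confirming that differentiation under the integral sign (or uniform convergence of the analytic Neumann series) is justified on the whole closed or open half-plane; this is handled by the factorial decay of $\|K^n\|$, which dominates any polynomial growth and secures locally uniform convergence.
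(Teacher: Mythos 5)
Your proposal is correct, and it uses the same core machinery as the paper: the Volterra operator $K$, the Neumann series, and the observation that $|e^{2ik(x-y)}|=e^{-2\,\mathrm{Im}(k)(x-y)}\le 1$ on the integration range exactly when $\mathrm{Im}\,k\ge 0$, which fixes the half-plane pairing. The one genuine difference is in the existence/uniqueness step: the paper invokes the Banach fixed point theorem, using the bound $\|Kf\|_{L^\infty}\le\|q\|_{L^1}\|f\|_{L^\infty}$ together with the hypothesis $\|q\|_{L^1}<1$ to make $K$ a contraction (after a somewhat redundant appeal to the Fredholm alternative), and only uses the factorial Neumann-series bound $\|K^n\|\le\|U(q)\|_{L^1}^n/n!$ for the analyticity claim. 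You instead run the factorial bound from the start, which proves existence, uniqueness and analyticity in one stroke and shows --- correctly --- that this particular lemma needs no smallness assumption at all; in the paper the hypothesis $\|q\|_{L^1}<1$ (later sharpened to $<1/6$) is genuinely needed only downstream, for the invertibility constant $e^{\|U(q)\|_{L^1}}$, the absence of zeros of $a(k)$ and $d(k)$ (Lemma \ref{2.77}), and the bound $|r_{1,2}(k)|<1$. On your bookkeeping worry: the factor $2$ in your estimate is avoidable, since the first component of $Kf$ depends only on $f_2$ and the second only on $f_1$, and $\|\bar q(-\cdot)\|_{L^1}=\|q\|_{L^1}$; with the paper's norm $\|f\|=\|f_1\|_{L^\infty}+\|f_2\|_{L^\infty}$ one gets the sharp constant $\|q\|_{L^1}$, which is what makes the paper's contraction argument work under the stated hypothesis. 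Your uniqueness claim could be made fully explicit with one line --- two solutions satisfy $f-g=K^n(f-g)$, and $\|K^n\|\to 0$ by the factorial bound --- but this follows immediately from the operator estimate you already have.
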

\begin{proof}
We only give the proof for the Jost function $\varphi_-$, others can be given by similar procedure.

For a   vector function  $f(x;k)=(f_1(x;k), f_2(x;k))^T$, define a operator $K$ by
\begin{equation}
(Kf)(x;k)=\int_{-\infty}^x {\rm diag}(1,e^{2ik(x-y)})U(q(y))f(y;k)dy,
\label{31}\end{equation}
then  the integral equation can be rewritten as
\begin{equation}
    \varphi_-=e_1+K\varphi_-,
\end{equation}
its deformation is
$$(I-K)\varphi_-=e_1.$$
According to the Fredholm's alternative theorem, it is sufficient to prove that there exists a unique zero solution to the homogeneous  equation $Kf=f$.

Since $q\in L^1(\R)$, for every $k\in\C^+$, we have
\begin{equation}
    \| Kf(\cdot;k)\|_{L^{\infty}(\R)}\leq \| q\|_{L^1(\R)}\|f(\cdot;k) \|_{L^{\infty}(\R)}.
    \label{LxLk}
\end{equation}
where the norm is defined by
$$\| f(\cdot;k)\|_{L^{\infty}(\R)}=\| f_1(\cdot;k)\|_{L^{\infty}(\R)}+f_2 (\cdot;k)\|_{L^{\infty}(\R)}.$$
Due to the small-norm assumption $\| q\|_{L^1(\R)}<1$,  the operator $K$ is a contraction from $L^{\infty}(\R)$ to $L^{\infty}(\R)$.  By the Banach fixed point theorem, there exists a unique solution $\varphi_-(\cdot;k)\in L^{\infty}(\R)$ of the integral equation (\ref{31}) for  every $k\in\C^+$.


To prove the analyticity of $\varphi(x;\cdot)$ in $\C^+$, we define Neumann series
$$w(x;k)=\sum_{n=0}^{\infty}w_n(x;k)$$
with
\begin{align}
&w_0=e_1, \ \ w_{n+1}(x;k)=\int_{-\infty}^x {\rm diag}(1,e^{2ik(x-y)})U(q(y))w_n(y;k)dy.\nonumber
\end{align}
We see that
$$\|w_{n+1}(x;k)\|_{L^{\infty}_x}\leq \frac{1}{n!}\| U(q)\|^n_{L^1},$$
consequently, the Neumann series $w(x;k)$ converges absolutely and uniformly for every $x\in\R$ and $k\in\C^+$. As a result, $\varphi_-(x;\cdot)$ is analytic in $\C^+$ for every $x\in\R$.
\end{proof}
\begin{lemma} \label{2.2}Let $q\in L^1(\R)$ and $\| q\|_{L^1(\R)}<1$. For every $x\in\R$, the Jost functions $\varphi_{\pm}$ and $\phi_{\pm}$ satisfy the following limits as |k| approaches to infinity in their analytic domains such that $|\rm{Im}\ k|\rightarrow\infty$
    \begin{align}
        &\lim_{|k|\rightarrow \infty} \varphi_{\pm}(x;k)= e_1,\label{lim1}\\
        &\lim_{|k|\rightarrow \infty} \phi_{\pm}(x;k)= e_2.
    \end{align}
If in addition, $q\in C(\R)$, then
    \begin{align}
        &\lim_{|k|\rightarrow \infty} 2i\sigma k[\varphi_{\pm}(x;k)-e_1]=s_{\pm}^{(1)}(x)e_1+\bar{q}(-x)e_2,\label{9}\\
         &\lim_{|k|\rightarrow \infty} 2i\sigma k[\phi_{\pm}(x;k)-e_2]=-\sigma{q}(x)e_1+s_{\pm}^{(2)}(x)e_2,\label{10}
         \end{align}
where
\begin{equation}
\begin{split}
&s_{\pm}^{(1)}(x)=\int_{\pm\infty}^x q(y)\bar{q}(-y) \d y,\\
&s_{\pm}^{(2)}(x)=\int_{\pm\infty}^x q(y)\bar q(-y) \d y.
\end{split}\label{q12}
\end{equation}
\end{lemma}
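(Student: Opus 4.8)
The plan is to argue directly from the Volterra equations (\ref{5})--(\ref{6}), componentwise, carrying out the details only for $\varphi_-$; the remaining three functions follow verbatim after changing the sign of the exponential and the half-plane. Writing $\varphi_-=(\varphi_-^{(1)},\varphi_-^{(2)})^T$ and inserting $U$, the system decouples into
\[
\varphi_-^{(1)}(x;k)=1+\int_{-\infty}^x q(y)\varphi_-^{(2)}(y;k)\,\d y,\qquad \varphi_-^{(2)}(x;k)=-\sigma\int_{-\infty}^x e^{2ik(x-y)}\bar q(-y)\varphi_-^{(1)}(y;k)\,\d y,
\]
so that only the second component carries the kernel $e^{2ik(x-y)}$, which satisfies $|e^{2ik(x-y)}|=e^{-2\,\mathrm{Im}(k)(x-y)}\le1$ for $y\le x$, $k\in\C^+$, and tends to $0$ pointwise as $\mathrm{Im}\,k\to\infty$. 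Throughout I would use the uniform bound $\sup_{k\in\C^+}\|\varphi_-(\cdot;k)\|_{L^\infty}\le(1-\|q\|_{L^1})^{-1}$ supplied by the contraction estimate in Lemma \ref{2.1}.

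For the leading limits (\ref{lim1}) I would apply dominated convergence to the second equation: its integrand is dominated by the integrable function $(1-\|q\|_{L^1})^{-1}|q(-y)|$ and vanishes pointwise, so $\varphi_-^{(2)}(x;k)\to0$; substituting into the first equation and again invoking dominated convergence yields $\varphi_-^{(1)}(x;k)\to1$. The companion statements for $\varphi_+,\phi_\pm$ are identical.

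The substantive content is the $O(1/k)$ refinement (\ref{9})--(\ref{10}). The mechanism is that, deep in the half-plane, the kernel $e^{2ik(x-y)}$ behaves as an approximate identity concentrated at the endpoint $y=x$ and contributing a factor $(2ik)^{-1}$. After the substitution $u=x-y$ and the elementary identity $-2ik\int_0^\infty e^{2iku}\,\d u=1$, I would write
\[
2i\sigma k\,\varphi_-^{(2)}(x;k)-\bar q(-x)=-2ik\int_0^\infty e^{2iku}\big[\bar q(u-x)\varphi_-^{(1)}(x-u;k)-\bar q(-x)\big]\,\d u
\]
and estimate the right side by splitting the $u$-integral at a small $\delta>0$: on $[0,\delta]$ the bracket is uniformly small by continuity of $q$ together with $\varphi_-^{(1)}(\cdot;k)\to1$, while on $[\delta,\infty)$ the factor $e^{-2\,\mathrm{Im}(k)u}$ produces exponential decay controlling the prefactor $|k|$. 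This gives $2i\sigma k\,\varphi_-^{(2)}(x;k)\to\bar q(-x)$, the second component of (\ref{9}). Multiplying the first equation by $2i\sigma k$ and passing to the limit under the integral sign then gives
\[
2i\sigma k\big[\varphi_-^{(1)}(x;k)-1\big]=\int_{-\infty}^x q(y)\,\big[2i\sigma k\,\varphi_-^{(2)}(y;k)\big]\,\d y\longrightarrow\int_{-\infty}^x q(y)\bar q(-y)\,\d y=s_-^{(1)}(x),
\]
the first component of (\ref{9}); the formulas (\ref{10}) for $\phi_\pm$ come out the same way, with the roles of the two components interchanged and $e^{2ik(x-y)}$ replaced by $e^{-2ik(x-y)}$ acting in $\C^-$.

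I expect the real obstacle to be the rigorous extraction of the $1/k$ term under the sole hypothesis $q\in C(\R)$: with no derivative available one cannot integrate by parts, so the approximate-identity estimate must be made uniform in the joint limit $|k|,|\mathrm{Im}\,k|\to\infty$. The two delicate points are controlling the tail $\int_\delta^\infty$ against the growing prefactor $|k|$, and the $k$-dependence of $\varphi_-^{(1)}(x-u;k)$ sitting inside the integral. Both are most cleanly handled through the Neumann/Born decomposition of Lemma \ref{2.1}: the entire leading $1/k$ contribution to $\varphi_-^{(2)}$ comes from the first iterate $-\sigma\int_{-\infty}^x e^{2ik(x-y)}\bar q(-y)\,\d y$, whose integrand carries no $k$-dependent factor so that continuity of $q$ alone suffices for the approximate-identity limit, while the geometric bound $\|w_{n+1}\|_{L^\infty}\le(n!)^{-1}\|U(q)\|_{L^1}^n$ shows the remaining iterates to be $o(1/k)$.
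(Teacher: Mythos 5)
Your main argument is, in substance, the paper's own proof. The first part (componentwise dominated convergence in the Volterra equations) is identical, and your endpoint-concentration identity with the splitting at $\delta$ is, after undoing the substitution $u=x-y$, exactly the paper's decomposition $\varphi_-^{(2)}=I_1+I_2+I_3$ with density $\eta(y;k)=-\sigma\bar q(-y)\varphi_-^{(1)}(y;k)$: exponential smallness of the far field, an exact evaluation at the endpoint, a modulus-of-continuity bound for the near field, and the choice $\delta=({\rm Im}\,k)^{-1/2}$. The passage from the second component to the first by multiplying (\ref{112}) by $k$ and passing to the limit is also the paper's step, with the same implicit use of boundedness of the ratio $|k|/{\rm Im}\,k$ that the paper makes.

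The one point where you deviate is your final paragraph, and there the reasoning is flawed. The bound $\|w_{n+1}\|_{L^{\infty}}\le (n!)^{-1}\|U(q)\|_{L^1}^n$ is uniform in $k$; it gives absolute and uniform convergence of the Neumann series but carries no decay in $k$ whatsoever, so it cannot show that $k\sum_{n\ge 3}w_n^{(2)}\rightarrow 0$. The remainder $\varphi_-^{(2)}-w_1^{(2)}=-\sigma\int_{-\infty}^x e^{2ik(x-y)}\bar q(-y)\left[\varphi_-^{(1)}(y;k)-1\right]\d y$ is indeed $o(1/k)$, but proving it requires re-running the very same endpoint-splitting argument with the density $\bar q(-y)[\varphi_-^{(1)}(y;k)-1]$, whose endpoint value tends to zero; the Neumann decomposition buys nothing. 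Fortunately this remark is inessential: your displayed identity already keeps $\varphi_-^{(1)}(x-u;k)$ inside the bracket, and the near-field estimate goes through by writing the bracket as $[\bar q(u-x)-\bar q(-x)]\varphi_-^{(1)}(x-u;k)+\bar q(-x)[\varphi_-^{(1)}(x-u;k)-\varphi_-^{(1)}(x;k)]+\bar q(-x)[\varphi_-^{(1)}(x;k)-1]$, controlling the three terms by the modulus of continuity of $q$, the equicontinuity bound $|\varphi_-^{(1)}(y;k)-\varphi_-^{(1)}(x;k)|\le\|\varphi_-^{(2)}(\cdot;k)\|_{L^{\infty}}\int_{x-\delta}^{x}|q(z)|\d z$ (uniform in $k$), and the limit $\varphi_-^{(1)}(x;k)\rightarrow 1$ from the first part of the lemma --- not, as you say, by ``continuity of $q$ alone.'' Replace the last paragraph by this estimate and your proof coincides with the paper's.
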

\begin{proof}
Again, we only give the proof of the Jost function $\varphi_-$. Let $\varphi_-=[\varphi_-^{(1)}, \varphi_-^{(2)}]^t$. It follows from (\ref{5}) and (\ref{6}) that
\begin{align}
&\varphi_-^{(1)}(x;k)=1+\int_{-\infty}^x q(y)\varphi_-^{(2)}(y;k)dy,\label{112}\\
&\varphi_-^{(2)}(x;k)=-\sigma\int_{-\infty}^x e^{2ik(x-y)}\bar{q}(-y)\varphi_-^{(1)}(y;k)dy.\label{113}
\end{align}
 We acquire  $\varphi_-(\cdot;k)\in L^{\infty}(\R)$ thanks to Lemma \ref{2.1}. Employing  $q\in L^1(\R)$,
 we have
\begin{equation}
    \lim_{|k|\rightarrow\infty}\varphi_-^{(2)}(x;k)=0
\end{equation}
by Lebesgue's dominated convergence theorem. Similarly, we arrive at
$$\lim_{|k|\rightarrow\infty}\varphi_-^{(1)}(x;k)=1$$
in view of  $q\in L^1(\R)$ and $\varphi_-(\cdot;k)\in L^{\infty}(\R)$. This completes the proof of (\ref{lim1}) for $\varphi_-$.

If in addition $q\in C(\R)$, then for small $\delta >0$, we rewrite (\ref{113}) as
\begin{equation}
    \begin{split}
        \varphi_-^{(2)}(x;k)=&\int_{-\infty}^{x-\delta} e^{2ik(x-y)}\eta(y;k)dy+\eta(x;k)\int_{x-\delta}^x e^{2ik(x-y)}dy
        \\&+\int_{x-\delta}^x e^{2ik(x-y)}[\eta(y;k)-\eta(x;k)]dy\\
        =&I_1+I_2+I_3,
    \end{split}
\end{equation}
where $\eta(x;k)=-\sigma\bar{q}(-x)\varphi_-^{(1)}(x;k)$.

Since $\eta(\cdot;k)\in L^1(\R)$, we get
$$|I_1|\leq e^{-2\delta\rm{Im \ k}}\|\eta(\cdot;k) \|_{L^1(\R)}.$$
Since $\eta(\cdot;k)\in C(\rm{R})$, we give
\begin{equation}
\begin{split}
|I_3| \leq& \|\eta(\cdot;k)-\eta(x;k) \|_{L^{\infty}(x-\delta,x)}\left| \int_{x-\delta}^x e^{2ik(x-y)} \d y \right|\\
\leq& \frac{1}{2\rm{Im\ k}}\|\eta(\cdot;k)-\eta(x;k) \|_{L^{\infty}(x-\delta,x)},
\nonumber
\end{split}
\end{equation}
and
$$I_2=\frac{1}{2ik}(e^{2ik\delta}-1)\eta(x;k).$$
Choose $\delta={\rm{Im}\ k}^{-\frac 12}$ such that $\delta\rightarrow 0$ as $|k|\rightarrow\infty$. Thus $kI_1$ and $kI_3$ approach to zero as $|k|\rightarrow\infty$. Therefore, we obtain \begin{equation}\lim_{|k|\rightarrow\infty}k\varphi_-^{(2)}(x;k)=\lim_{|k|\rightarrow\infty}kI_2  =\frac{\sigma}{2i}\bar{q}(-x).
\label{15}
\end{equation}
Multiplying (\ref{112}) with k and substituting (\ref{15}) leads to
\begin{equation}
    \lim_{|k|\rightarrow\infty} k(\varphi_-^{(1)}-1)=\frac{\sigma}{2i}\int_{-\infty}^x q(y)\bar{q}(-y) \d y.
\end{equation}
This completes the proof of (\ref{9}) for $\varphi_-$.
\end{proof}
\begin{proposition}(see \cite{18}) If $w\in L^2(\R)$, then
\begin{equation}
    \sup_{x\in\rm{R}}\left\| \int_{-\infty}^x e^{2ik(x-y)}w(y)dy \right\|_{L^2_k(\R)}\leq \sqrt{\pi}\| w\|_{L^2(\R)}.
\label{17}
\end{equation}
And if $w\in H^1(\R)$, then
\begin{equation}
    \sup_{x\in\rm{R}}\left\| 2ik\int_{-\infty}^x e^{2ik(x-y)}w(y)dy+w(x) \right\|_{L^2_k(\R)}\leq \sqrt{\pi}\| \partial_x w\|_{L^2(\R)}.
\label{18}
\end{equation}
Moreover, if $w\in L^{2,1}(\R)$, then for every $x_0\in \R^-$, we have
\begin{equation}
    \sup_{x\in (-\infty, x_0)}\left\|\langle x \rangle \int_{-\infty}^x e^{2ik(x-y)}w(y)dy \right\|_{L^2_k(\R)}\leq \sqrt{\pi}\| w\|_{L^{2,1}(-\infty, x_0)},
\label{19}
\end{equation}
where $\langle x \rangle=(1+x^2)^{\frac 1 2}$.
\end{proposition}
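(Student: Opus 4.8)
The plan is to handle the three inequalities in turn, using the first as the engine for the other two. For (\ref{17}) I would first remove the $x$-dependence of the domain by the substitution $s=x-y$, writing
\[
\int_{-\infty}^x e^{2ik(x-y)}w(y)\,\d y=\int_0^\infty e^{2iks}w(x-s)\,\d s,
\]
so that, as a function of $k$, this is (up to the rescaling $\xi=2k$) the Fourier transform of $s\mapsto w(x-s)\mathbf 1_{s>0}$. Applying Plancherel's theorem then yields the exact identity
\[
\left\|\int_{-\infty}^x e^{2ik(x-y)}w(y)\,\d y\right\|_{L^2_k(\R)}^2=\pi\int_{-\infty}^x|w(y)|^2\,\d y,
\]
and bounding the right-hand side by $\pi\|w\|_{L^2(\R)}^2$ and taking the supremum over $x$ gives (\ref{17}). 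The factor $\tfrac12$ from the change of variable $\xi=2k$ combines with the Plancherel constant $2\pi$ to produce exactly $\pi$, which is precisely what the stated constant $\sqrt\pi$ demands.

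For (\ref{18}) I would reduce to (\ref{17}) by integration by parts. Using $2ik\,e^{2ik(x-y)}=-\partial_y e^{2ik(x-y)}$ and integrating by parts in $y$,
\[
2ik\int_{-\infty}^x e^{2ik(x-y)}w(y)\,\d y=-w(x)+\int_{-\infty}^x e^{2ik(x-y)}\partial_y w(y)\,\d y,
\]
where the boundary term at $y\to-\infty$ vanishes because $w\in H^1(\R)$ forces $w(y)\to 0$ while $|e^{2ik(x-y)}|=1$ for $k\in\R$. Hence the quantity in (\ref{18}) equals $\int_{-\infty}^x e^{2ik(x-y)}\partial_x w(y)\,\d y$, to which (\ref{17}) applies directly with $w$ replaced by $\partial_x w$.

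For (\ref{19}) I would return to the exact Plancherel identity from the first step, rather than the inequality, which gives
\[
\left\|\langle x\rangle\int_{-\infty}^x e^{2ik(x-y)}w(y)\,\d y\right\|_{L^2_k(\R)}^2=\pi\int_{-\infty}^x\langle x\rangle^2|w(y)|^2\,\d y.
\]
The crucial geometric observation is that for $y\le x<x_0<0$ one has $|y|\ge|x|$ and therefore $\langle x\rangle\le\langle y\rangle$; replacing $\langle x\rangle^2$ by $\langle y\rangle^2$ inside the integral and extending the domain to $(-\infty,x_0)$ bounds the right-hand side by $\pi\|w\|_{L^{2,1}(-\infty,x_0)}^2$. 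Taking the supremum over $x\in(-\infty,x_0)$ then completes the proof.

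I expect the only genuinely delicate points to be bookkeeping ones: fixing the Fourier/Plancherel normalization so that the constant comes out to exactly $\sqrt\pi$, and justifying the vanishing of the boundary term at $-\infty$ in the integration by parts for (\ref{18}). The monotonicity of $\langle\cdot\rangle$ on the negative half-line, which is what makes the weighted estimate (\ref{19}) work and forces the restriction $x_0\in\R^-$, is elementary but is the one structural idea that does not simply follow from (\ref{17}).
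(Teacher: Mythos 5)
Your proof is correct: the change of variables $s=x-y$ plus Plancherel (with the rescaling $\xi=2k$ producing exactly the constant $\sqrt{\pi}$), the integration by parts reducing the second bound to the first applied to $\partial_x w$, and the monotonicity $\langle x\rangle\le\langle y\rangle$ for $y\le x\le x_0<0$ are precisely the needed ingredients. The paper itself gives no proof of this proposition --- it quotes it from Pelinovsky and Shimabukuro \cite{18} --- and your Fourier--Plancherel argument is essentially the standard proof given in that reference, so the two approaches coincide.
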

\begin{lemma} If $q\in L^{2,1}(\R)$ and $\| q\|_{L^1(\R)}< 1$, then for every $x\in\R^{\pm}$, we have
\begin{equation}
    \varphi_{\pm}(x;\cdot)-e_1\in H^1(\R),\quad \phi_{\pm}(x;\cdot)-e_2\in H^1(\R).
\label{20}
\end{equation}
Moreover, if $q\in H^1(\R)\cap L^{2,1}(\R)$, for every $x\in\R$, we have
\begin{align}
    & 2i\sigma k(\varphi_{\pm}(x;k)-e_1)-(s_{\pm}^{(1)}(x)e_1+\bar q(-x)e_2)\in L^2_k(\R),\label{21}\\
     & 2i\sigma k(\phi_{\pm}(x;k)-e_2)-(-\sigma q(x)e_1+s_{\pm}^{(2)}(x)e_2)\in L^2_k(\R)\label{kphi}.
\end{align}
\end{lemma}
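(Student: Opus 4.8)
The plan is to treat the representative case of $\varphi_-(x;\cdot)$ with $x\in\R^-$; the functions $\phi_\pm$ and the remaining sign/reflected cases follow by the identical argument, with the integral based at $+\infty$ and the weighted estimates taken on the appropriate half-line. Writing $\varphi_-(x;k)-e_1=(u(x;k),v(x;k))^{t}$, equations (\ref{112})--(\ref{113}) become the coupled Volterra system $u(x;k)=\int_{-\infty}^x q(y)v(y;k)\,\d y$ and $v(x;k)=-\sigma\int_{-\infty}^x e^{2ik(x-y)}\bar q(-y)(1+u(y;k))\,\d y$, i.e. $(u,v)^t=Ke_1+K(u,v)^t$ with the same operator $K$ as in (\ref{31}). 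The first observation is that, because $|e^{2ik(x-y)}|=1$ for real $k$, Minkowski's integral inequality gives $\|(Kf)_1(x;\cdot)\|_{L^2_k}\le\|q\|_{L^1}\sup_{y\le x}\|f_2(y;\cdot)\|_{L^2_k}$ and $\|(Kf)_2(x;\cdot)\|_{L^2_k}\le\|q\|_{L^1}\sup_{y\le x}\|f_1(y;\cdot)\|_{L^2_k}$, so that under the small-norm hypothesis $K$ is a contraction on $C(\R^-;L^2_k(\R))$. Applying (\ref{17}) to the inhomogeneous term $Ke_1=(0,-\sigma\int_{-\infty}^x e^{2ik(x-y)}\bar q(-y)\,\d y)^t$ shows $Ke_1\in L^2_k$ with norm $\lesssim\|q\|_{L^2}$, whence $(u,v)=(I-K)^{-1}Ke_1\in L^2_k$; this is the $L^2_k$ half of (\ref{20}).

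To complete (\ref{20}) I would differentiate the system in $k$. The pair $(\partial_k u,\partial_k v)$ solves $(I-K)(\partial_k u,\partial_k v)^t=(0,\tilde S)^t$ with the \emph{same} contraction $K$, where the source $\tilde S(x;k)=-2i\sigma\int_{-\infty}^x (x-y)e^{2ik(x-y)}\bar q(-y)(1+u(y;k))\,\d y$ is produced by differentiating the exponential. Its $q$-linear part is controlled by applying (\ref{17}) to $w(y)=(x-y)\bar q(-y)$ and using $|x-y|\le\langle y\rangle$ for $y<x<0$, which yields the bound $\lesssim\|q\|_{L^{2,1}}$ and explains both the weighted hypothesis and the restriction $x\in\R^-$. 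The genuinely delicate term is the nonlinear part $-2i\sigma\int_{-\infty}^x(x-y)e^{2ik(x-y)}\bar q(-y)u(y;k)\,\d y$: here $u$ depends on $k$, so (\ref{17}) is unavailable and Minkowski only gives $\lesssim\int_{-\infty}^x|x-y|\,|q(-y)|\,\|u(y;\cdot)\|_{L^2_k}\,\d y$, an integral which the crude uniform bound $\|u(y;\cdot)\|_{L^2_k}\le C$ leaves (logarithmically) divergent. The main obstacle is therefore to extract enough decay of $\|u(y;\cdot)\|_{L^2_k}$ as $y\to-\infty$; I expect to prove $\|u(y;\cdot)\|_{L^2_k}+\|v(y;\cdot)\|_{L^2_k}\lesssim\langle y\rangle^{-1}$ by a bootstrap seeded by $\|v_0(y;\cdot)\|_{L^2_k}\le\sqrt\pi\|q\|_{L^2(-\infty,y)}\lesssim\langle y\rangle^{-1}$, after which $|x-y|\langle y\rangle^{-1}\le 1$ renders the integral finite and gives $\tilde S\in L^2_k$, hence $\partial_k(u,v)\in L^2_k$.

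For the second claim I would not estimate $\partial_k$ but instead introduce the rescaled unknowns $U=2i\sigma k\,u-s_-^{(1)}$ and $V=2i\sigma k\,v-\bar q(-x)$, i.e. exactly the quantities appearing in (\ref{21}). Substituting $u=(U+s_-^{(1)})/(2i\sigma k)$ and $v=(V+\bar q(-x))/(2i\sigma k)$ into the Volterra system and using $\int_{-\infty}^x q(y)\bar q(-y)\,\d y=s_-^{(1)}(x)$, one finds that $(U,V)$ solves the \emph{same} system $(I-K)(U,V)^t=(0,\,-R_0-R_1)^t$, where $R_1=\sigma\int_{-\infty}^x e^{2ik(x-y)}\bar q(-y)s_-^{(1)}(y)\,\d y\in L^2_k$ by (\ref{17}) (since $s_-^{(1)}\in L^\infty$), and $R_0$ is the remainder in $2ik\int_{-\infty}^x e^{2ik(x-y)}\bar q(-y)\,\d y+\bar q(-x)$, which lies in $L^2_k$ precisely by (\ref{18}). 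This last step is where the stronger hypothesis $q\in H^1(\R)$ enters, through $\|R_0\|_{L^2_k}\lesssim\|\partial_x q\|_{L^2}$, and it simultaneously produces the correct leading coefficients $s_-^{(1)}$ and $\bar q(-x)$ in (\ref{21}). Since $K$ is a contraction, $(U,V)=(I-K)^{-1}(0,-R_0-R_1)\in L^2_k$; note that no weighted estimate (\ref{19}) is needed here, which is why (\ref{21})--(\ref{kphi}) hold for every $x\in\R$. The statement (\ref{kphi}) for $\phi_\pm$, and the remaining $\pm$ cases, follow verbatim with the roles of $q$ and $\bar q(-\cdot)$ interchanged. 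The principal difficulty remains the decay bootstrap of the preceding paragraph; everything else reduces to the contraction property of $K$ together with the estimates (\ref{17})--(\ref{19}).
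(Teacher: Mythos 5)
Your proposal is correct, and its second half is, up to the harmless normalization factor $2i\sigma$, exactly the paper's argument: the paper applies $I-K$ to $k(\varphi_--e_1)-(s_-^{(1)}e_1+\frac{\sigma}{2i}\bar q(-x)e_2)$ and obtains a source $\tilde h e_2$ whose two pieces are precisely your $R_0$ and $R_1$, bounded via (\ref{18}) (this is where $\partial_x q\in L^2$ enters) and via (\ref{17}) applied to $\bar q(-y)s_-^{(1)}(y)\in L^2$, after which $I-K$ is inverted; your remark that no weighted estimate is needed, so that (\ref{21})--(\ref{kphi}) hold for every $x\in\R$, is also how the paper justifies the stronger quantifier there. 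Where you genuinely deviate is the $k$-derivative step for (\ref{20}). The paper never meets your ``delicate nonlinear term'' with the factor $(x-y)$: instead of differentiating the raw system, it differentiates the modified vector $v=(\partial_k\varphi_-^{(1)},\ \partial_k\varphi_-^{(2)}-2ix\varphi_-^{(2)})^t$, for which the sources $h_1,h_2,h_3$ in (\ref{331}) carry the weight $y$ directly on the potential; these are handled by Minkowski's inequality together with the weighted estimates (\ref{19}) and (\ref{28}), and $\partial_k\varphi_-^{(2)}$ is recovered at the end as $v_2+2ix\varphi_-^{(2)}$, using (\ref{28}) once more for $x\varphi_-^{(2)}$. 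You keep the factor $(x-y)$ and tame it with $|x-y|\le\langle y\rangle$ for $y<x<0$ in the linear term and with the decay $\|u(y;\cdot)\|_{L^2_k}\lesssim\langle y\rangle^{-1}$ in the nonlinear term. That bootstrap is sound --- the seed $\|q\|_{L^2(-y,\infty)}\le\langle y\rangle^{-1}\|q\|_{L^{2,1}}$ holds for $y\le 0$, and the Volterra iteration preserves the weight since $\|q\|_{L^1(\R)}<1$ --- but it is nothing other than the paper's estimate (\ref{28}), $\sup_{x<x_0}\|\langle x\rangle(\varphi_-(x;\cdot)-e_1)\|_{L^2_k}\lesssim\|q\|_{L^{2,1}}$, which the paper derives from (\ref{19}) and the resolvent bound (\ref{25}) without iteration. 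So the two proofs consume the same analytic input; the paper's modified vector is a bookkeeping device that trades your pointwise inequality $|x-y|\langle y\rangle^{-1}\le 1$ for an algebraic shift of $2ix$ into the unknown, while your route has the merit of making visible exactly why the weighted hypothesis $q\in L^{2,1}(\R)$ and the half-line restriction $x\in\R^{\pm}$ are needed in the first statement.
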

\begin{proof}
Again, we only give the proof of $\varphi_-$. Recall $\varphi_-=e_1+K\varphi_-$ where the operator K is the same as the one in Lemma 1. Subtracting $( {I}-K)e_1$
from the equation $({I}-K)\varphi_-=e_1$ and calculating leads to
\begin{equation}\begin{split}
    ({I}-K)(\varphi_--e_1)=& e_1-(I-K)e_1=he_2,
\end{split}\label{23}\end{equation}
where $h(x;k)=-\sigma\int_{-\infty}^x e^{2ik(x-y)}\bar{q}(-y)\d y$.

It is sufficient to prove (\ref{20}) for $\varphi_-$ that the operator $\rm{I}-K$ is invertible and bounded as well as   $h(x;k)\in L_x^{\infty}(\R; L^2_k(\R))$ and $\partial_k \varphi_-(x;k)\in L_x^{\infty}(\R;L^2_k(\R))$.

Recall the formulation of the  operator K in the  Lemma \ref{2.1}, we have
\begin{equation}
    \| K^n f(x;k) \|_{L_x^{\infty}L^2_k}\leq \frac{1}{n!} \|U(q) \|_{L^1}^n \|f(x;k) \|_{L_x^{\infty}L^2_k},
\end{equation}
where $\|f\|_{L_x^{\infty}L_k^2}=\sup_{x\in\R}\|f\|_{L^2_k(\R)}$ and $\| U(q)\|_{L^1}=\sum_{i,j=1}^2\| U_{ij}\|_{L^1}$,
thus we arrive at $\| K^n f(x;k) \|_{L_x^{\infty}L^2_k}\rightarrow 0$
which means the operator ${I}-K$ is invertible and bounded due to
\begin{equation}
    \|({I}-K)^{-1} \|\leq \sum_{n=0}^{\infty}\frac{1}{n!}\| U(q)\|_{L_1}^n=e^{\| U(q)\|_{L^1}}.
\label{25}\end{equation}

Since $q\in L^{2,1}(\R)$, it follows from (\ref{17}) in proposition 1 that
\begin{equation} \sup_{x\in\R}\| h(x;k)  \|_{L^2_k(\R)}\leq \sqrt{\pi} \| q\|_{L^2(\R)},
\end{equation}
then we arrive at $h(x;k)\in L_x^{\infty}(\R; L^2_k(\R))$
and for every $x_0\in\rm{R}^-$,
 \begin{equation}
 \sup_{x\in(-\infty, x_0)}\| \langle x \rangle h(x;k)\|_{L^2_k(\R)}\leq \sqrt{\pi}\| q\|_{L^{2,1}(-\infty, x_0)},
\label{27} \end{equation}
  Combining (\ref{25}) with (\ref{27}), we acquire
 \begin{equation}
     \sup_{x\in(-\infty, x_0)}\| \langle x \rangle( \varphi_-(x;k)-e_1 )\|_{L^2_k(\R)}\leq \sqrt{\pi}e^{\| U(q)\|_{L^1}}\| q\|_{L^{2,1}(\R)}.
\label{28} \end{equation}
 Next we consider the vector ${v}(x;k)$. Let
 $${v}(x;k)=\left(\begin{array}{c}
 \partial_k \varphi_-^{(1)}(x;k)\\
 \partial_k \varphi_-^{(2)}(x;k)-2ix\varphi_-^{(2)}(x;k)
 \end{array}\right).$$
 Direct calculation yields
 \begin{equation}
     ({I}-K){v}=\left(\begin{array}{c}\partial_k\varphi_-^{(1)}-\int_{-\infty}^x q(y)(\partial_k\varphi_-^{(2)}-2iy\varphi_-^{(2)})\d y
 \\
 \partial_k\varphi_-^{(2)}-2ix\varphi_-^{(2)}+\int_{-\infty}^x \sigma e^{2ik(x-y)}\bar q(-y)\partial_k \varphi_-^{(1)}(y;k) \d y
 \end{array}\right).
\nonumber\end{equation}
Taking derivative with k for the integral equation (\ref{5}) gives
\begin{equation}
    \begin{split}
         &\partial_k \varphi_-^{(1)}=\int_{-\infty}^x q(y)\partial_k\varphi_-^{(2)}(y;k)\d y,\\
         &\partial_k \varphi_-^{(2)}=-\sigma \int_{-\infty}^x e^{2ik(x-y)}\bar q(-y)\partial_k \varphi_-^{(1)}(y;k)\d y-2i\sigma\int_{-\infty}^x(x-y)e^{2ik(x-y)}\bar q(-y)\varphi_-^{(1)}(y;k)\d y,
    \end{split}\nonumber
\end{equation}
and  utilizing the integral equation (\ref{5}) of $\varphi_-^{(2)}$,  we derive that
\begin{equation}
    (I-K)v=h_1e_1+(h_2+h_3)e_2,
\label{331}\end{equation}
with
\begin{equation}
    \begin{split}
        &h_1(x;k)=2i\int_{-\infty}^x yq(y)\varphi_-^{(2)}(y;k)\d y,  \\
        &h_2(x;k)=2i\sigma \int_{-\infty}^x ye^{2ik(x-y)}\bar{q}(-y)(\varphi_-^{(1)}(y;k)-1) \d y,   \\
        &h_3(x;k)=2i\sigma \int_{-\infty}^x ye^{2ik(x-y)}\bar{q}(-y) \d y.
    \end{split}
\end{equation}
 We estimate the first term in (\ref{331}) by the Minkowski's inequality of integral form and find that for every $x_0\in\R^-$,
 \begin{equation}
     \begin{split}
         \|h_1(x;k) \|_{L^2_k(\R)}\leq & \int_{-\infty}^x \|2iyq(y)\varphi_-^{(2)}(y;k) \|_{L^2_k(\R)} \d y\\
         \leq &c\| q\|_{L^1(\R)}\sup_{x\in(-\infty,x_0)} \|\langle x\rangle \varphi_-^{(2)}(x;k) \|_{L^2_k(\R)}.
     \end{split}
 \nonumber\end{equation}
 where c is a constant.

 Due to the imbedding of $L^{2,1}(\R)$ into $L^1(\R)$ and (\ref{28}), we infer that
 \begin{equation}
     \sup_{x\in(-\infty,x_0)} \|h_1(x;k) \|_{L^2_k(\R)}
     < +\infty,
 \end{equation}
Performing a similar analysis yields
 \begin{equation}
 \begin{split}
     \|h_2(x;k) \|_{L^2_k(\R)}
     \leq & \int_{-\infty}^x  \|2i\sigma ye^{2ik(x-y)}\bar{q}(-y)(\varphi_-^{(1)}(y;k)-1)\|_{L^2_k(\rm R)} \d y\\
     \leq &  c\| q\|_{L^1(\R)}\sup_{x\in(-\infty,x_0)} \|\langle x \rangle (\varphi_-^{(1)}(x;k))-1 \|_{L^2_k(\R)}\\
     <&+\infty,
 \end{split}
 \nonumber\end{equation}
For the second term of (\ref{331}), we utilize the estimate (\ref{19}) of proposition 1 and find that
 \begin{equation}
     \sup_{x\in(-\infty,x_0)} \|h_3(x;k) \|_{L^2_k(\R)}\leq c \|q \|_{L^{2,1}(\R)}<+\infty.
 \end{equation}
 As a result,  we conclude that $v(x;k)\in L_x^{\infty}((-\infty, x_0); L^2_k(\R))$.

 Since $x\varphi_-^{(2)}(x;k)\in L_x^{\infty}((-\infty, x_0); L^2_k(\rm R))$ given the estimate (\ref{28}), we obtain  $\partial_k \varphi_-(x;k)\in L_x^{\infty}((-\infty, x_0); L^2_k(\R))$. This completes the proof of (\ref{20}) for $\varphi_-$.

Moreover, if $q\in H^1(\R)\cap L^{2,1}(\R)$, using the operator $I-K$ again leads to
\begin{equation}\begin{split}
    &( I-K)(k(\varphi_--e_1)-(s_-^{(1)}e_1+\frac{\sigma}{2i}\bar q(-x)e_2))\\
    &=khe_2-(I-K)(s_-^{(1)}e_1+\frac{\sigma}{2i}\bar q(-x)e_2)\\
    &=\left(\begin{array}{c}
    0\\
    -\sigma k\int_{-\infty}^xe^{2ik(x-y)}\bar q(-y)\d y-\frac{\sigma}{2i}\bar q(-x)-\sigma\int_{-\infty}^x e^{2ik(x-y)}\bar q(-y)s_-^{(1)}(y)\d y
    \end{array}\right).
\end{split}\nonumber\end{equation}
Let
\begin{equation}
    \tilde h(x;k)=-\frac{\sigma}{2i}\left( 2ik\int_{-\infty}^xe^{2ik(x-y)}\bar q(-y)\d y+\bar q(-x)\right)-\sigma\int_{-\infty}^x e^{2ik(x-y)}\bar q(-y)s_-^{(1)}(y)\d y,
\nonumber\end{equation}
we have
\begin{equation}
    ( I-K)(k(\varphi_--e_1)-(s_-^{(1)}e_1+\frac{\sigma}{2i}\bar q(-x)e_2))=\tilde h e_2.
\label{H1}\end{equation}
On account of the estimate (\ref{18}) of  proposition 1, we infer that
\begin{equation}
        \sup_{x\in\R} \left\|\left( 2ik\int_{-\infty}^xe^{2ik(x-y)}\bar q(-y)\d y+\bar q(-x)\right) \right\|_{L^2_k(\R)}\\
        \leq c\|\partial_x q \|_{L^2(\R)}<+\infty,
\label{tildeh1}
\end{equation}
As $q\in L^2(\R)$, we have
$\bar q(-x) s_-^{(1)}(x)=\frac{\sigma}{2i}\bar q(-x)\int_{-\infty}^xq(y)\bar q(-y)\d y\in L^2(\R).$
 Then according to the  estimate (\ref{17}), we have
\begin{equation}
    \sup_{x\in\R} \left\|\int_{-\infty}^x e^{2ik(x-y)}\bar q(-y)s_-^{(1)}(y)\d y \right\|_{L^2_k(\R)}\leq c\|\bar q(-x) s_-^{(1)}(x)\|_{L^2(\R)}<+\infty.
\label{tildeh2}\end{equation}
Then it follows from (\ref{tildeh1}) and (\ref{tildeh2}) that $\tilde h(x;k)\in L_x^{\infty}(\R; L^2_k(\R))$.
Cosequently, We  conclude from (\ref{H1}) that for every $x\in\R$,  $k(\varphi_-(x;k)-e_1)-(s_-^{(1)}(x)e_1+\frac{\sigma}{2i}\bar q(-x)e_2)$ belongs to $L^2_k(\R)$ because the operator $I-K$ is invertible and bounded.
\end{proof}

\begin{lemma} Let  $q\in H^1(\R)\cap L^{2,1}(\R)$ and   $\| q\|_{L^1(\R)}<1 $.
The mappings
\begin{align}
   &  L^{2,1}(\R)\ni q  \mapsto [\varphi_{\pm}(x;k)-e_1, \phi_{\pm}(x;k)-e_2]
    \in L_x^{\infty}(\R^{\pm}; H^1_k(\R)),\\
   & H^1(\R)\cap L^{2,1}(\R)\ni q  \mapsto
    [\hat{\varphi}_{\pm}, \hat{\phi}_{\pm}]
   \in L_x^{\infty}(\R; L^2_k(\R))
\end{align}
are Lipschitz continuous,
here
\begin{equation}
    \begin{split}
        &\hat{\varphi}_{\pm}=2i\sigma k(\varphi_{\pm}(x;k)-e_1)-(s_{\pm}^{(1)}(x)e_1+\bar q(-x)e_2), \\
        &\hat{\phi}_{\pm}=2i\sigma k(\phi_{\pm}(x;k)-e_2)-(-\sigma q(x)e_1+s_{\pm}^{(2)}(x)e_2).
    \end{split}\nonumber
\end{equation}
\end{lemma}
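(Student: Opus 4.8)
The plan is to establish (local) Lipschitz continuity by the familiar device of comparing the Jost functions generated by two potentials $q$ and $\tilde q$ drawn from a fixed bounded ball $\{\|q\|_{H^1(\R)\cap L^{2,1}(\R)}\le R,\ \|q\|_{L^1(\R)}<1\}$, and exploiting that the Volterra operator $K$ and all the inhomogeneous terms appearing in the preceding lemmas depend on $q$ either linearly or quadratically. I treat $\varphi_-$ only; the remaining three Jost functions and the $\pm$ cases follow verbatim up to sign and reflection. Write $K$ and $\tilde K$ for the operator of Lemma \ref{2.1} built from $q$ and $\tilde q$. Since $U(q)$ is linear in $q$, the difference $K-\tilde K$ is the Volterra operator built from $U(q-\tilde q)$, so \eqref{LxLk} and its $L^2_k$ analogue give $\|(K-\tilde K)f\|\le c\,\|q-\tilde q\|_{L^1(\R)}\|f\|$ in each relevant norm. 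Together with the uniform resolvent bound \eqref{25}, valid for both potentials, this is the engine driving every estimate below.

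For the first map, fix $x\in\R^-$ and recall from \eqref{23} that $\varphi_--e_1=(I-K)^{-1}he_2$ and $\tilde\varphi_--e_1=(I-\tilde K)^{-1}\tilde he_2$. The resolvent identity $(I-K)^{-1}-(I-\tilde K)^{-1}=(I-K)^{-1}(K-\tilde K)(I-\tilde K)^{-1}$ gives
$$\varphi_--\tilde\varphi_-=(I-K)^{-1}(h-\tilde h)e_2+(I-K)^{-1}(K-\tilde K)(\tilde\varphi_--e_1).$$
Because $h-\tilde h$ is linear in $q-\tilde q$, estimate \eqref{17} controls $\|h-\tilde h\|_{L^2_k}$ by $\sqrt\pi\|q-\tilde q\|_{L^2(\R)}$, while the second summand is bounded by $c\,\|q-\tilde q\|_{L^1(\R)}\sup_x\|\tilde\varphi_--e_1\|_{L^2_k}$, which is finite by the preceding lemma. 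Hence $q\mapsto\varphi_--e_1$ is Lipschitz into $L^\infty_x(\R^-;L^2_k(\R))$.

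The $H^1_k$ part of the first map and the whole of the second map are obtained by running the identical resolvent-identity scheme on the two further integral identities already derived in the preceding lemma: equation \eqref{331} for the $k$-derivative vector $v$, and equation \eqref{H1} for the quantity whose scalar-and-affine modification is $\hat\varphi_-$. Subtracting the $v$- and $\tilde v$-equations and splitting off $(K-\tilde K)\tilde v$ as above, I must show the inhomogeneities $h_1,h_2,h_3$ are Lipschitz; here $h_1,h_3$ are linear in $q$ and are handled by \eqref{17} and \eqref{19}, whereas $h_2$ carries the product $\bar q(-y)(\varphi_-^{(1)}-1)$, whose difference telescopes as $\bar q\,(\varphi_-^{(1)}-\tilde\varphi_-^{(1)})+(\bar q-\bar{\tilde q})(\tilde\varphi_-^{(1)}-1)$ and is controlled by the $L^2_k$ Lipschitz bound just proved together with the weighted bound \eqref{28}. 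For \eqref{H1} the inhomogeneity $\tilde h$ contains the linear piece governed by \eqref{18} and the quadratic term $\bar q(-y)s_-^{(1)}(y)$; writing $s_-^{(1)}-\tilde s_-^{(1)}$ as the integral of $q\bar q-\tilde q\bar{\tilde q}$ and factoring, this term is Lipschitz on the ball of radius $R$, with the remaining linear contribution again handled by \eqref{17}. Since $\hat\varphi_-$ differs from a scalar multiple of the left side of \eqref{H1} only by the separately-Lipschitz terms $s_-^{(1)}e_1$ and $\bar q(-x)e_2$, this closes the second map.

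The main obstacle is tracking the genuinely nonlinear dependence through the $k$-derivative and $\hat\varphi_-$ estimates: the terms $h_2$ and $\bar q\,s_-^{(1)}$ are quadratic in $q$, so upgrading boundedness to Lipschitz continuity forces the telescoping product decompositions above, each factor of which must be measured in a norm for which an estimate already exists — the previously established $L^2_k$ and weighted $L^2_k$ bounds \eqref{28} on the Jost functions being precisely what makes the factoring close. Once these product terms are dispatched, the uniform resolvent bound \eqref{25} converts each inhomogeneity estimate into the asserted Lipschitz bound, with a constant depending only on $R$; the $\phi_\pm$ and $+$ cases require only cosmetic changes.
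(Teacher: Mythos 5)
Your proposal is correct and follows essentially the same route as the paper: the second-resolvent identity $(I-K)^{-1}-(I-\tilde K)^{-1}=(I-K)^{-1}(K-\tilde K)(I-\tilde K)^{-1}$ applied to the representations $(I-K)(\varphi_--e_1)=he_2$, the $v$-equation, and the $\hat\varphi_-$-equation, with the inhomogeneity differences controlled by the estimates of Proposition 1 and the operator difference bounded via the embedding $L^{2,1}(\R)\hookrightarrow L^1(\R)$. In fact you supply more detail than the paper itself, which disposes of the $H^1_k$ and $\hat\varphi_\pm$ cases with a one-line ``repeating an analogous analysis''; your explicit telescoping of the quadratic terms $h_2$ and $\bar q\,s_-^{(1)}$ is exactly what that omitted analysis requires.
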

\begin{proof}
Let $q, \tilde{q}\in L^{2,1}(\R)$. Let the  functions $[\varphi_{\pm}, \phi_{\pm}]$ and $[\tilde{\varphi}_{\pm}, \tilde{\phi}_{\pm} ]$ denote the corresponding Jost functions,  respectively. It is enough to prove that there are constants $c_1$, $c_2$  such that for every $x\in\R^{\pm}$,
\begin{equation}
    \|\varphi_{\pm}(x;\cdot)-\tilde{\varphi}_{\pm}(x;\cdot) \|_{H^1(\R)}\leq c_1 \|q-\tilde q \|_{L^{2,1}(\R)},
\label{36}\end{equation}
and
\begin{equation}
    \|\phi_{\pm}(x;\cdot)-\tilde{\phi}_{\pm}(x;\cdot) \|_{H^1(\R)}\leq c_2 \|q-\tilde q \|_{L^{2,1}(\R)}.
\end{equation}
Moreover, if $q,\tilde q\in H^1(\R)\cap L^{2,1}(\R)$, then there exists constants $c_3$ and $c_4$ such that  for every $x\in\R$,
\begin{equation}
    \| \hat{\varphi}_{\pm}(x;\cdot)-\hat{\tilde{\varphi}}_{\pm}(x;\cdot) \|_{L^2}\leq c_3\|q-\tilde q \|_{H^1\cap L^{2,1}}
  \end{equation}
and
\begin{equation}
    \| \hat{\phi}_{\pm}(x;\cdot)-\hat{\tilde{\phi}}_{\pm}(x;\cdot) \|_{L^2} \leq c_4\|q-\tilde q \|_{H^1\cap L^{2,1}}.
\end{equation}
Again, we only prove the statement (\ref{36}) for $\varphi_-$ . Analogous manipulation lead to other statements.
 Using (\ref{23}), we have
\begin{equation}
    \begin{split}
        \varphi_--\tilde{\varphi}_=&(\varphi_--e_1)-(\tilde{\varphi}_--e_1)\\
        =&(I-K)^{-1}he_2-( I-\tilde{K})^{-1}\tilde{h}e_2\\
        =&( I-K)^{-1}(h-\tilde h)e_2+( I-K)^{-1}(K-\tilde K)( I-\tilde{K})^{-1}\tilde{h}e_2
    \end{split}
\label{39}\end{equation}
where
\begin{equation}
    h(x;k)-\tilde h(x;k)=-\sigma\int_{-\infty}^xe^{2ik(x-y)}(\bar{q}(-y)-\bar{\tilde{q}}(y))\rm dy.
\end{equation}
For the first term of (\ref{39}), it follows from (\ref{17}) that
\begin{equation}
    \sup_{x\in\R} \|h(x;\cdot)-\tilde h(x;\cdot) \|_{L^2(\R)}\leq \sqrt{\pi} \| q-\tilde q\|_{L^{2}(\R)}\leq \sqrt{\pi} \| q-\tilde q\|_{L^{2,1}(\R)}.
\end{equation}
For the second term of (\ref{39}), recall the bounded property (\ref{LxLk}) of the operator K, we have
\begin{equation}
    \| (K-\tilde K)f\|_{L_x^{\infty}L_k^2}\leq \| q-\tilde q\|_{L^{2,1}}\| f\|_{L_x^{\infty}L_k^2},
\end{equation}
where we have used the embedding of $L^{2,1}(\R)$ into $L^1(\R)$.

As a result, we obtain
\begin{equation}
    \|\varphi_{-}(x;\cdot)-\tilde{\varphi}_{-}(x;\cdot) \|_{L^2(\R)}\leq c \|q-\tilde q \|_{L^{2,1}(\R)}.
\end{equation}
Repeating an analogous analysis  for (\ref{331}) for $v=(\partial_k \varphi_-^{(1)}, \partial_k \varphi_-^{(2)}-2ix\varphi_-^{(2)})^t$ , we find
\begin{equation}
    \|\varphi_{-}(x;\cdot)-\tilde{\varphi}_{-}(x;\cdot) \|_{H^1(\R)}\leq c \|q-\tilde q \|_{L^{2,1}(\R)}.
\nonumber\end{equation}
\end{proof}
\subsection{Scattering coefficients and  Lipschitz continuous}
Since both the Jost functions $(\varphi_-(x;k),\phi_-(x;k))$ and $(\varphi_+(x;k),\phi_+(x;k))$ satisfy the first-order linear equation (\ref{spectral}),  there exists a linear dependence
\begin{equation}
    (\varphi_-(x;k)   ,\phi_-(x;k))=(\varphi_+(x;k),\phi_+(x;k))\left(\begin{array}{cc}
    a(k) & c(k)e^{-2ikx}\\
    b(k)e^{2ikx} & d(k)
    \end{array}\right),
\label{linear}
\end{equation}
for every $x\in\R$ and every $k\in\R$.

And given the linear dependence, we have the following properties of the scattering coefficients for $k\in\R$,
\begin{itemize}
    \item The scattering coefficients have the Wronskian's expressions
    \begin{equation}
    \begin{split}
    &a(k)=W(\varphi_-(0;k),\phi_+(0;k)),\\
    &b(k)=W(\varphi_+(0;k),\varphi_-(0;k)),\\
    &d(k)=W(\varphi_+(0;k),\phi_-(0;k)),
    \end{split}\label{ws}
    \end{equation}

    \item The scattering coefficients satisfy the symmetry
     \begin{equation}
    \begin{split}
    &\overline{a(-\bar{k})}=a(k),\\
    &c(k)=-\sigma\overline{b(-\bar{k})},\\
    &\overline{d(-\bar{k})}=d(k),
    \end{split}\label{symm}
    \end{equation}

    \item  The determinant of the scattering matrix is
    \begin{equation}a(k)d(k)+\sigma b(k)\overline{b(-\bar{k})}=1,\label{determi}\end{equation}
    \item The scattering coefficients satisfy the asymptotic
    \begin{equation}
    \begin{split}
    &a(k)\rightarrow 1\quad as\   |k|\rightarrow\infty,\\
    & d(k)\rightarrow 1\quad as\   |k|\rightarrow\infty,\\
    & b(k)\rightarrow 0\quad as\   |k|\rightarrow\infty.
     \end{split}\label{ad}\end{equation}
     \item The scattering coefficients admit the integral expression:
         \begin{equation}
         \begin{split}
         &a(k)=1+\int_{-\infty}^{+\infty} q(y)\varphi_-^{(2)}(y;k)\d y,\\
         &b(k)=-\sigma\int_{-\infty}^{+\infty}e^{-2iky}\bar q(-y)\varphi_-^{(1)}(y;k)\d y,\\
         &d(k)=1-\sigma\int_{-\infty}^{+\infty}\bar q(-y)\phi_-^{(1)}(y;k)\d y.
         \label{binte}\end{split}\end{equation}
\end{itemize}
Define the reflection coefficients
 \begin{equation}
 r_1(k)=\frac{b(k)}{a(k)},\quad r_2(k)=\frac{\overline{b(-k)}}{d(k)},\quad k\in\R.
 \label{r1r2}\end{equation}

\begin{lemma} \label{lemaa4}
If $q\in L^{2,1}(\R)$ and $\| q\|_{L^1(\R)}< 1$, then the function a(k) is continued analytically in $\C^+$, whereas the function d(k) is continued analytically in $\C^-$, in addition,
    $$a(k)-1,\ b(k),\ d(k)-1\in H^1(\R).$$
    Moreover, if $q\in H^{1,1}(\R)$, then
    $$b(k)\in L^{2,1}(\R),\ kb(k)\in L^{\infty}(\R).$$
\end{lemma}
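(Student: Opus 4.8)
The plan is to obtain every assertion from the Wronskian representations (\ref{ws}) evaluated at the single point $x=0$, so that all the $k$-regularity of $a,b,d$ is inherited from that of the Jost functions already controlled in the preceding lemmas. The analyticity is then immediate: since $a(k)=W(\varphi_-(0;k),\phi_+(0;k))$ is a fixed polynomial in the entries of the two columns $\varphi_-(0;\cdot)$ and $\phi_+(0;\cdot)$, both of which Lemma \ref{2.1} continues analytically into $\C^+$, the coefficient $a$ is analytic in $\C^+$; the same reasoning applied to $d(k)=W(\varphi_+(0;k),\phi_-(0;k))$, whose columns are analytic in $\C^-$, gives analyticity of $d$ in $\C^-$.

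For the membership $a-1,b,d-1\in H^1(\R)$ I would use that the earlier lemma (\ref{20}) places all four perturbations $\varphi_\pm(0;\cdot)-e_1$ and $\phi_\pm(0;\cdot)-e_2$ in $H^1(\R)$ as functions of $k$. Expanding each Wronskian in (\ref{ws}) about the constant limits $W(e_1,e_2)=1$ and $W(e_1,e_1)=0$, for instance writing $\varphi_-^{(1)}(0;\cdot)=1+\alpha$ and $\phi_+^{(2)}(0;\cdot)=1+\delta$ with $\alpha,\delta,\varphi_-^{(2)}(0;\cdot),\phi_+^{(1)}(0;\cdot)\in H^1(\R)$, yields $a-1=\alpha+\delta+\alpha\delta-\varphi_-^{(2)}(0;\cdot)\phi_+^{(1)}(0;\cdot)$. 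Because $H^1(\R)$ is a Banach algebra under pointwise multiplication in one dimension (it embeds into $L^\infty(\R)$ and products of $H^1$ functions are again $H^1$), each term lies in $H^1(\R)$, so $a-1\in H^1(\R)$; identical bookkeeping for $b=W(\varphi_+(0;\cdot),\varphi_-(0;\cdot))$ and $d-1$ gives $b,d-1\in H^1(\R)$. This step uses only $q\in L^{2,1}(\R)$, exactly the hypothesis of (\ref{20}).

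For the sharper conclusions under $q\in H^{1,1}(\R)$ I would work directly with $b(k)=-\sigma\int_\R e^{-2iky}\bar q(-y)\varphi_-^{(1)}(y;k)\d y$ and extract a factor of $k$ by integrating by parts in $y$ via $\partial_y e^{-2iky}=-2ike^{-2iky}$. The boundary terms vanish since $\bar q(-y)\to 0$, and using $\partial_y\varphi_-^{(1)}=q\varphi_-^{(2)}$ from (\ref{112}) one finds $2ik\,b(k)=-\sigma\int_\R e^{-2iky}\big[-\overline{q'(-y)}\varphi_-^{(1)}(y;k)+\bar q(-y)q(y)\varphi_-^{(2)}(y;k)\big]\d y$. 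For $kb\in L^\infty(\R)$ I bound the integrand pointwise, using the uniform bound $\sup_{x\in\R,\,k\in\R}|\varphi_-^{(j)}(x;k)|<\infty$ ($j=1,2$) from Lemma \ref{2.1}, together with $q'\in L^1(\R)$ and $\bar q(-\cdot)q\in L^1(\R)$ (both from $q\in H^{1,1}(\R)\hookrightarrow L^{2,1}\hookrightarrow L^1$ and Cauchy--Schwarz). For $kb\in L^2(\R)$ I split $\varphi_-^{(1)}=1+(\varphi_-^{(1)}-1)$: the genuinely $k$-independent piece $\int_\R e^{-2iky}\overline{q'(-y)}\d y$ is controlled in $L^2_k$ by $\|q'\|_{L^2}$ through Plancherel, while the remaining pieces are handled by Minkowski's integral inequality and the uniform-in-$x$ finiteness of $\sup_{x\in\R}\|\varphi_-^{(1)}(x;\cdot)-1\|_{L^2_k}$ and $\sup_{x\in\R}\|\varphi_-^{(2)}(x;\cdot)\|_{L^2_k}$ coming from the boundedness of $(I-K)^{-1}$ and estimate (\ref{17}). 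Combining $b\in L^2(\R)$ from the previous step with $kb\in L^2(\R)$ delivers $b\in L^{2,1}(\R)$.

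The delicate point is this last part: the scattering integrals run over all of $\R$, whereas the sharp weighted $L^2_k$ bounds of the preceding lemma, such as (\ref{28}), are available only on a half-line. The resolution is precisely that integrating by parts trades the missing spatial weight for a derivative of $q$ and reduces everything to the \emph{unweighted}, uniform-in-$x$ Jost bounds, which do hold on all of $\R$; making the vanishing of the boundary terms and the handling of the $k$-dependent integrand rigorous under the sole assumption $q\in H^{1,1}(\R)$ is where the care is needed.
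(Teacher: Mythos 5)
Your proposal is correct, and three of its four parts coincide with the paper's own proof: the analyticity of $a$ and $d$ via the Wronskian formulas (\ref{ws}) together with the analyticity of the Jost functions from Lemma \ref{2.1}; the memberships $a-1,\ b,\ d-1\in H^1(\R)$ via expanding the Wronskians around their constant limits and invoking the Banach-algebra property of $H^1(\R)$, exactly as in (\ref{47})--(\ref{49}); and $kb\in L^{\infty}(\R)$ via integrating the representation (\ref{binte}) by parts and substituting $\partial_x\varphi_-^{(1)}=q\varphi_-^{(2)}$, which is precisely the paper's identity (\ref{3.62}) and its subsequent pointwise bound. Where you genuinely diverge is the proof that $b\in L^{2,1}(\R)$: the paper obtains $kb\in L^2_k(\R)$ from the Wronskian representation (\ref{50}) of $kb(k)$ evaluated at $x=0$, whose building blocks $k\varphi_{\pm}^{(2)}(0;k)-\frac{\sigma}{2i}\bar q(0)$ and $\varphi_{\pm}^{(1)}(0;k)-1$ lie in $L^2_k$ by the refined estimates (\ref{20})--(\ref{21}), so the heavy lifting is done by that earlier lemma; you instead extract $kb\in L^2_k$ from the same integration-by-parts identity used for the $L^{\infty}$ bound, splitting $\varphi_-^{(1)}=1+(\varphi_-^{(1)}-1)$, controlling the free term $\int_{\R}e^{-2iky}\overline{q'(-y)}\,\d y$ by Plancherel, and the remaining terms by Minkowski's integral inequality against the uniform-in-$x$ unweighted bounds $\sup_{x\in\R}\|\varphi_-(x;\cdot)-e_1\|_{L^2_k}<\infty$, which do hold on all of $\R$ by the boundedness (\ref{25}) of $(I-K)^{-1}$ and the estimate (\ref{17}). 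Both routes are valid: the paper's is a clean point evaluation at $x=0$ but rests on the more elaborate asymptotic estimate (\ref{21}), whereas yours is more self-contained, deriving both conclusions about $kb$ from a single identity without ever invoking (\ref{21}), at the cost of some Minkowski/Fubini bookkeeping. Your closing remark also identifies correctly why this works: integration by parts trades the spatial weight (available in (\ref{28}) only on half-lines) for a derivative of $q$, so only unweighted, full-line Jost bounds are needed.
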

\begin{proof}
By Lemma \ref{2.1}, We  obtained that the Jost functions $\varphi_-(0;k)$ and $\phi_+(0;k)$ are continued analytically in $\C^{+}$, and $\varphi_+(0;k)$ and $\phi_-(0;k)$ are continued analytically in $\C^{-}$. Thus we can derive that $a(k)$ and $d(k)$ are continued analytically in $\C^+$ and $\C^-$ respectively  by using the Wronskian's expressions (\ref{ws}).

The scattering coefficients a(k) can be rewritten as
    \begin{equation}
        \begin{split}
            a(k)-1=&\varphi_-^{(1)}(0;k)\phi_+^{(2)}(0;k)-\varphi_-^{(2)}(0;k)\phi_+^{(1)}(0;k)-1\\
        =&(\varphi_-^{(1)}(0;k)-1)(\phi_+^{(2)}(0;k)-1)+(\phi_+^{(2)}(0;k)-1)\\&+(\varphi_-^{(1)}(0;k)-1)-\varphi_-^{(2)}(0;k)\phi_+^{(1)}(0;k).
        \end{split}
\label{47}    \end{equation}
Since $\varphi_-(0;k)-e_1\in H^1_k(\R), \phi_+(0;k)-e_2\in H^1_k(\R)$ and the space $H^1_k(\R)$ is a Banach algebra, we conclude that $a(k)-1\in H^1_k(\R)$.

Utilizing an analogous method, we have
\begin{equation}
    \begin{split}
     d(k)-1=&\phi_+^{(1)}(0;k)\phi_-^{(2)}(0;k)-\varphi_+^{(2)}(0;k)\phi_-^{(1)}(0;k)-1\\
        =&(\varphi_+^{(1)}(0;k)-1)(\phi_-^{(2)}(0;k)-1)+(\phi_-^{(2)}(0;k)-1)\\&+(\varphi_+^{(1)}(0;k)-1)-\varphi_+^{(2)}(0;k)\phi_-^{(1)}(0;k),
        \end{split}
\label{48}\end{equation}
then we obtain $d(k)-1\in H_k^1(\R)$.

For the scattering coefficients b(k), we find
\begin{equation}
    \begin{split}
        b(k)=&\varphi_+^{(1)}(0;k)\varphi_-^{(2)}(0;k)-\varphi_+^{(2)}(0;k)\varphi_-^{(1)}(0;k)\\
        =&(\varphi_+^{(1)}(0;k)-1)\varphi_-^{(2)}(0;k)+\varphi_-^{(2)}(0;k)\\
        &-\varphi_+^{(2)}(0;k)(\varphi_-^{(1)}-1)-\varphi_+^{(2)}(0;k).
    \end{split}
\label{49}\end{equation}
Again, using the Banach algebra property,  we get $b(k)\in H^1_k(\R)$.

Moreover, if $q\in H^1(\R)\cap L^{2,1}(\R)$, we  rewrite $kb(k)$ as
\begin{equation}
\begin{split}
    kb(k)=&k\varphi_+^{(1)}(0;k)\varphi_-^{(2)}(0;k)-k\varphi_+^{(2)}(0;k)\varphi_-^{(1)}(0;k)\\
    =&\varphi_+^{(1)}(0;k)(k\varphi_-^{(2)}(0;k)-\frac{\sigma}{2i}\bar q(0))-\varphi_-^{(1)}(0;k)(k\varphi_+^{(2)}(0;k)-\frac{\sigma}{2i}\bar q(0))\\
    &+\frac{\sigma}{2i}\bar q(0)(\varphi_+^{(1)}(0;k)-1)-\frac{\sigma}{2i}\bar q(0)(\varphi_-^{(1)}(0;k)-1),
\end{split}\label{50}
\end{equation}
Recall (\ref{20}) and (\ref{21}),  we find  the all terms in (\ref{50}) are in $L_k^2(\R)$. Therefore, we arrive at  $b(k)\in L^{2,1}_k(\R)$.
Applying the integral expression (\ref{binte}) of $b(k)$ and integrating by part, we have
\begin{equation}
\begin{split}
b(k)=&-\sigma\int_{\R}e^{-2iky}\bar q(-y)(\varphi_-^{(1)}(y;k)-1)\d y-\sigma\int_{\R}e^{-2iky}\bar q(-y)\d y
\\=&-\frac{\sigma}{2ik}\int_{\R}e^{-2iky}\left(\bar q(-y)\partial_y\varphi_-^{(1)}-\partial_y\bar q(-y)(\varphi_-^{(1)}-1) \right)\d y\\
&-\frac{\sigma}{2ik}\int_{\R}e^{-2iky}\partial_y\bar q(-y)\d y.
\end{split}
\end{equation}
Hence, $2i\sigma kb(k)$ also admits an integral expression
\begin{equation}
\begin{split}
2i\sigma kb(k)=&-\int_{\R}e^{-2iky}\left(\bar q(-y)\partial_y\varphi_-^{(1)}-\partial_y\bar q(-y)(\varphi_-^{(1)}-1) \right)\d y\\
&-\int_{\R}e^{-2iky}\partial_y\bar q(-y)\d y.
\end{split}\label{3.62}
\end{equation}
Taking derivative with respect to x gives
$$\partial_x\varphi_-^{(1)}(x;k)=q(x)\varphi_-^{(2)}(x;k).$$
Therefore, we finally obtain
\begin{equation}
\begin{split}
&\|2i\sigma kb(k)\|_{L^{\infty}(\R)}
\\ \leq & \|\varphi_-^{(2)}(x;k) \|_{L_k^{\infty}L_x^{\infty}}\|q \|_{L^2}^2+ \|\varphi_-^{(1)}(x;k) -1 \|_{L_k^{\infty}L_x^{\infty}}\|\partial_xq \|_{L^1} +\|\partial_x q \|_{L^1(\R)}
\end{split}
\end{equation}
Owing to $q\in H^{1,1}(\R)$, we obtain $kb(k)\in L^{\infty}(\R)$.
\end{proof}
\begin{lemma}Let $\| q\|_{L^1(\R)}\leq \frac16$.
The mappings
   \begin{align}
       L^{2,1}(\R)\ni q &\mapsto a(k)-1,\ b(k),\ d(k)-1\in H^1_k(\R),\\
       H^{1,1}(\R)\ni q &\mapsto b(k)\in \H(\R),
   \end{align}
   are Lipschitz continuous  with $$\H(\R)=\left\{r(k)|r(k)\in H^1(\R)\cap L^{2,1}(\R),kr(k)\in L^{\infty}(\R)\right\}.$$
\end{lemma}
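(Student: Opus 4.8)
The plan is to exploit the fact that every scattering coefficient is a finite sum of products of the Jost functions evaluated at $x=0$, as recorded in the decompositions (\ref{47})--(\ref{49}) and in the integral representations (\ref{50}) and (\ref{3.62}), and to transfer to these sums the Lipschitz continuity of the Jost-function maps already established. Throughout I would fix $q,\tilde q$ in the ball $\|q\|_{L^1(\R)},\|\tilde q\|_{L^1(\R)}\le\tfrac16$ and denote the corresponding quantities with and without tildes. The elementary bilinear identity $XY-\tilde X\tilde Y=(X-\tilde X)Y+\tilde X(Y-\tilde Y)$ reduces each product difference to terms carrying exactly one difference factor and one single-Jost-function factor. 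Setting $\tilde q=0$ in the earlier Lipschitz estimates yields the uniform bounds $\|\varphi_\pm(0;\cdot)-e_1\|_{H^1_k}\le c\|q\|_{L^{2,1}}$ and the analogous ones for $\phi_\pm$, so that every single-factor term is uniformly bounded on the small-norm ball, while the difference factors are controlled by $\|q-\tilde q\|_{L^{2,1}}$.

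For the first mapping I would apply this scheme to (\ref{47}). Since $H^1_k(\R)$ is a Banach algebra, each product difference satisfies $\|XY-\tilde X\tilde Y\|_{H^1_k}\le c\,(\|X-\tilde X\|_{H^1_k}+\|Y-\tilde Y\|_{H^1_k})$ with $c$ depending only on the uniform $H^1_k$ bounds of the factors. Summing over the four terms in (\ref{47}) gives $\|a-\tilde a\|_{H^1_k}\le c\|q-\tilde q\|_{L^{2,1}}$, and an identical treatment of (\ref{48}) and (\ref{49}) produces the corresponding bounds for $d-1$ and $b$, establishing the Lipschitz continuity of $q\mapsto(a-1,b,d-1)$ into $H^1_k(\R)$.

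For the second mapping I must supplement the $H^1_k$ control of $b$ with the weighted-$L^2$ and weighted-$L^\infty$ estimates defining $\H(\R)$. The $L^{2,1}$ part amounts to an $L^2_k$ bound on $kb(k)$; here I would use (\ref{50}), whose constituent blocks are $k\varphi_\pm^{(2)}(0;k)-\tfrac{\sigma}{2i}\bar q(0)$ and $\varphi_\pm^{(1)}(0;k)-1$. The first block is the $x=0$ trace of $\hat\varphi_\pm$ up to the explicit $s^{(1)}_\pm$ term, which the earlier lemma shows to be Lipschitz in $L^2_k$ for $q\in H^1\cap L^{2,1}$, while the second is Lipschitz in $H^1_k\subset L^2_k$. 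Telescoping the differences in (\ref{50}) then gives $\|k(b-\tilde b)\|_{L^2_k}\le c\|q-\tilde q\|_{H^1\cap L^{2,1}}$. For the weighted-$L^\infty$ part I would use the integral representation (\ref{3.62}) together with $\partial_x\varphi_-^{(1)}=q\varphi_-^{(2)}$: differencing, telescoping, and estimating as in the last display of Lemma \ref{lemaa4} bounds $\|k(b-\tilde b)\|_{L^\infty_k}$ by $\|q-\tilde q\|_{H^{1,1}}$ times uniform bounds on $\varphi_-^{(1)}-1$, $\varphi_-^{(2)}$ and their differences.

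The main obstacle is precisely this last, weighted-$L^\infty$ estimate, because it requires controlling the Jost functions and their Lipschitz dependence on $q$ in the $L^\infty_kL^\infty_x$ topology rather than the $L^\infty_xH^1_k$ or $L^\infty_xL^2_k$ topologies covered by the preceding lemma. I expect to fill this gap by returning to the Volterra equations (\ref{112})--(\ref{113}) and the contraction bound (\ref{LxLk}): the resolvent $(I-K)^{-1}$ is uniformly bounded by $e^{\|U(q)\|_{L^1}}$ on the ball $\|q\|_{L^1}\le\tfrac16$, so subtracting the equations for $\varphi_-$ and $\tilde\varphi_-$ and iterating yields a pointwise-in-$k$ estimate $\|\varphi_-(\cdot;k)-\tilde\varphi_-(\cdot;k)\|_{L^\infty_x}\le c\|q-\tilde q\|_{L^1}$ uniform in $k\in\R$. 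Feeding this into the differenced form of (\ref{3.62}) closes the argument, and the quantitative threshold $\tfrac16$ is what keeps the resolvent bound and all the multilinear constants uniform across the ball.
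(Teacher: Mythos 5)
Your proposal is correct and follows essentially the same route as the paper, whose entire proof is the one-line remark that Lipschitz continuity follows from the representations (\ref{47}), (\ref{50}), (\ref{3.62}) together with the Lipschitz continuity of the Jost functions; your telescoping of products via the Banach-algebra property of $H^1_k(\R)$ is exactly the intended filling-in of that remark. The one place you go beyond the paper is the $L^\infty_k$ bound on $k(b-\tilde b)$, where you correctly note that the earlier lemma's $L^\infty_x H^1_k$ and $L^\infty_x L^2_k$ Lipschitz estimates do not directly cover the needed $L^\infty_x L^\infty_k$ control of Jost-function differences, and your resolvent argument $(I-K)(\varphi_--\tilde\varphi_-)=(K-\tilde K)\tilde\varphi_-$ with the uniform bound $\|(I-K)^{-1}\|\le e^{\|U(q)\|_{L^1}}$ is a valid way to close that gap which the paper leaves implicit.
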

\begin{proof}
From the representations (\ref{47}),(\ref{50}) and (\ref{3.62}) and the Lipschitz continuity of  the Jost function $\varphi_{\pm}$ and $\phi_{\pm}$, we can obtain the Lipschitz continuity of the scattering coefficients.
\end{proof}
\begin{lemma}\label{2.77}
 If $q\in L^{2,1}(\R)$ and $\|q\|_{L^1(\R)}<\frac16$, then the spectral problem (\ref{spectral}) admits no eigenvalues or resonances, that is, the scattering coefficients $a(k)$ and $d(k)$ admit no zeros in $\C^+\cup\R$ and $\C^-\cup\R$, respectively.
\end{lemma}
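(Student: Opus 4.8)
The plan is to establish non-vanishing by a quantitative small-norm estimate: I will show that $|a(k)-1|<1$ everywhere on $\C^+\cup\R$ and $|d(k)-1|<1$ everywhere on $\C^-\cup\R$. Since $a(k)=0$ would force $|a(k)-1|=1$, such a strict bound immediately excludes zeros of $a$ in $\overline{\C^+}$, and likewise for $d$ in $\overline{\C^-}$; as zeros of $a$ in $\C^+$ are eigenvalues and zeros on $\R$ are resonances, this yields exactly the assertion of the lemma.

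First I would extract a uniform bound on the Jost functions valid up to the real axis. The key observation is that the contraction estimate (\ref{LxLk}) from Lemma \ref{2.1} holds with the same constant $\|q\|_{L^1}$ for every $k$ with $\mathrm{Im}\,k\ge 0$, because the kernel factor satisfies $|e^{2ik(x-y)}|=e^{-2\,\mathrm{Im}(k)(x-y)}\le 1$ whenever $x\ge y$ and $\mathrm{Im}\,k\ge 0$. Writing $\varphi_-=(I-K)^{-1}e_1$ and using $\|(I-K)^{-1}\|\le(1-\|q\|_{L^1})^{-1}$ then gives the uniform bound
\begin{equation*}
\sup_{k\in\overline{\C^+}}\|\varphi_-(\cdot;k)\|_{L^\infty(\R)}\le\frac{1}{1-\|q\|_{L^1}},
\end{equation*}
and in particular $\|\varphi_-^{(2)}(\cdot;k)\|_{L^\infty}\le(1-\|q\|_{L^1})^{-1}$. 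The identical argument applied to $\phi_-$ produces $\|\phi_-^{(1)}(\cdot;k)\|_{L^\infty}\le(1-\|q\|_{L^1})^{-1}$ uniformly on $\overline{\C^-}$.

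Next I would insert these bounds into the integral representations (\ref{binte}). For $k\in\overline{\C^+}$,
\begin{equation*}
|a(k)-1|=\left|\int_{\R}q(y)\,\varphi_-^{(2)}(y;k)\,\d y\right|\le\|q\|_{L^1}\,\|\varphi_-^{(2)}(\cdot;k)\|_{L^\infty}\le\frac{\|q\|_{L^1}}{1-\|q\|_{L^1}},
\end{equation*}
and similarly, using $|\sigma|=1$ and $\int_{\R}|\bar q(-y)|\,\d y=\|q\|_{L^1}$,
\begin{equation*}
|d(k)-1|\le\frac{\|q\|_{L^1}}{1-\|q\|_{L^1}},\qquad k\in\overline{\C^-}.
\end{equation*}
Under the hypothesis $\|q\|_{L^1}<\tfrac16$ the right-hand side is bounded by $\tfrac15<1$, so the strict inequalities $|a(k)-1|<1$ and $|d(k)-1|<1$ hold throughout the respective closed half-planes, and the conclusion follows.

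I expect the only delicate point to be justifying that the integral representation and the uniform bound genuinely extend up to the real boundary, which is precisely where resonances live. The representation $a(k)=1+\int_{\R}q\,\varphi_-^{(2)}\,\d y$ is obtained from the linear-dependence relation (\ref{linear}) by sending $x\to+\infty$, using that $e^{2ikx}\phi_+(x;k)\to 0$ for $\mathrm{Im}\,k>0$ and that the integrand lies in $L^1$ (since $q\in L^1$ and $\varphi_-^{(2)}\in L^\infty$); by Lemma \ref{2.1} both sides are analytic in $\C^+$ and continuous up to $\R$, so the identity and the estimate persist on the real axis. Because the contraction constant $\|q\|_{L^1}$ does not depend on $k$ over all of $\overline{\C^+}$, no separate boundary analysis is needed, and the same reasoning closes the argument for $d(k)$ on $\overline{\C^-}$.
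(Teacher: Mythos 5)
Your proposal is correct and follows essentially the same route as the paper's proof: a uniform $L^\infty$ bound on the Jost functions from the contraction estimate (\ref{LxLk}) and the invertibility of $I-K$, inserted into the representations (\ref{binte}) to force $|a(k)-1|<1$ on $\overline{\C^+}$ and $|d(k)-1|<1$ on $\overline{\C^-}$. The only (immaterial) differences are that you bound $\|(I-K)^{-1}\|$ by the geometric series $(1-\|q\|_{L^1})^{-1}$ where the paper uses the Volterra bound $e^{2\|q\|_{L^1}}$, and that you get the boundary case $k\in\R$ directly from the $k$-uniformity of the contraction constant rather than by the paper's continuity argument.
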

\begin{proof}
Recall that $\varphi_-=e_1+K\varphi_-$ in Lemma \ref{2.1} and the operator $I-K$ is invertible and bounded from (\ref{25}). Using (\ref{LxLk}), we reach that for every $k\in\C^+$,
\begin{equation}
\|\varphi_-(\cdot;k)-e_1\|_{L^{\infty}}=\|(I-K)^{-1}\|\| Ke_1(\cdot;k)\|_{L^{\infty}}\leq e^{2\| q\|_{L^1}}<e^{1/3}.
\label{e2}\end{equation}
Employing (\ref{binte}), we derive for every $k\in\C^+$,
\begin{equation}
|a(k)|\geq 1-\left|       \int_{\R} q(y)\varphi_-^{(2)}(y;k)\d y      \right|> 1-\frac16e^{1/3}>0,
\end{equation}
Due to the continuity of $a(k)$, we obtain $|a(k)|\geq 1-\frac16e^{1/3}>0$ for $k\in\R$. As a result, $a(k)$ admits no zeros in  $\C^+\cup\R$.
Carrying out a similar manipulation for $d(k)$, we see that $d(k)$ admits no zeros in  $\C^-\cup\R$.
\end{proof}
\begin{lemma}
If $q\in L^{2,1}(\R)$ and $\|q\|_{L^1(\R)}<\frac16$, then for every $k\in\R$, we have $|r_{1,2(k)}|<1$.
\end{lemma}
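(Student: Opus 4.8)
The plan is to bound the numerators and denominators of $r_1(k)=b(k)/a(k)$ and $r_2(k)=\overline{b(-k)}/d(k)$ separately on the real axis: I would combine an upper bound on $|b(k)|$ with the lower bounds on $|a(k)|$ and $|d(k)|$ that are already available from Lemma \ref{2.77}. Indeed, the proof of Lemma \ref{2.77} establishes $|a(k)|\ge 1-\tfrac16 e^{1/3}$ for $k\in\C^+\cup\R$ and, symmetrically, $|d(k)|\ge 1-\tfrac16 e^{1/3}$ for $k\in\C^-\cup\R$; in particular both inequalities hold for every $k\in\R$, and both right-hand sides are strictly positive.

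First I would control $|b(k)|$. Starting from the integral representation (\ref{binte}), I estimate
\begin{equation}
|b(k)|\le \int_{\R}|\bar q(-y)|\,|\varphi_-^{(1)}(y;k)|\,\d y\le \|q\|_{L^1(\R)}\,\|\varphi_-^{(1)}(\cdot;k)\|_{L^\infty(\R)}.
\nonumber
\end{equation}
The required $L^\infty$ bound on the first component follows from $\varphi_-=(I-K)^{-1}e_1$ together with the operator estimate (\ref{25}), namely $\|(I-K)^{-1}\|\le e^{\|U(q)\|_{L^1}}=e^{2\|q\|_{L^1}}$, so that $\|\varphi_-^{(1)}(\cdot;k)\|_{L^\infty}\le \|\varphi_-(\cdot;k)\|_{L^\infty}\le e^{2\|q\|_{L^1}}<e^{1/3}$. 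Hence $|b(k)|<\tfrac16 e^{1/3}$ for every $k\in\R$, and since $-k\in\R$ as well, the identical argument yields $|b(-k)|<\tfrac16 e^{1/3}$.

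Finally I would assemble the pieces. For every $k\in\R$,
\begin{equation}
|r_1(k)|=\frac{|b(k)|}{|a(k)|}<\frac{\tfrac16 e^{1/3}}{1-\tfrac16 e^{1/3}},\qquad
|r_2(k)|=\frac{|b(-k)|}{|d(k)|}<\frac{\tfrac16 e^{1/3}}{1-\tfrac16 e^{1/3}}.
\nonumber
\end{equation}
It then remains only to verify the elementary inequality $\tfrac16 e^{1/3}<1-\tfrac16 e^{1/3}$, i.e. $e^{1/3}<3$, which holds because $e<27$. This forces the common right-hand side to be strictly less than $1$, giving $|r_1(k)|<1$ and $|r_2(k)|<1$.

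The computation is essentially routine; the one point requiring care — and the step I expect to be the real crux — is checking that the small-norm threshold $\|q\|_{L^1}<\tfrac16$ is tight enough to make the ratio strictly less than one, that is, that the upper bound on $|b|$ and the lower bounds on $|a|,|d|$ produced by the \emph{same} constant $e^{2\|q\|_{L^1}}$ are mutually compatible. Since this reduces to $e^{1/3}<3$, with ample room to spare, no sharpening of the constant is needed; had the threshold been looser one would instead either improve the Neumann-series bound (\ref{25}) or exploit the determinant relation (\ref{determi}) to transfer information between $b$, $a$, and $d$.
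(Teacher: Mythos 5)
Your proposal is correct and follows essentially the same route as the paper: bound $|b(k)|$ on $\R$ using the $L^\infty$ estimate on the Jost function obtained from the Neumann series for $(I-K)^{-1}$, combine this with the lower bounds $|a(k)|,|d(k)|\geq 1-\tfrac16 e^{1/3}$ from Lemma \ref{2.77}, and conclude with an elementary numerical inequality. The only cosmetic difference is that the paper splits $\varphi_-^{(1)}=(\varphi_-^{(1)}-1)+1$ and applies (\ref{e2}) to get $|b(k)|\leq \tfrac16(1+e^{1/3})$, whereas you bound $\|\varphi_-^{(1)}\|_{L^\infty}$ directly by the operator norm, giving the slightly sharper $|b(k)|<\tfrac16 e^{1/3}$; both comfortably pass the final check.
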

\begin{proof}
Rewrite the expression (\ref{binte}) for $b(k)$ as
\begin{equation*}
b(k)=\int_{\R} e^{-2iky}\bar q(-y)(\varphi_-^{(1)}(y;k)-1)\d y+\int_{\R} e^{-2iky}\bar q(-y)\d y.
\end{equation*}
for every $k\in\R$, applying (\ref{e2}) yields
\begin{equation*}
|b(k)|\leq \|\varphi_-^{(1)}(\cdot;k)-1 \|_{L^{\infty}}\|q \|_{L^1}+\|q \|_{L^1}\leq \frac16(1+e^{1/3}),
\end{equation*}
Due to $|a(k)|\geq 1-\frac16e^{1/3}, k\in\R$, we have
$$|r_1(k)|=\frac{|b(k)|}{|a(k)|}\leq \frac{\frac16(1+e^{1/3})}{1-\frac16e^{1/3}}<1.$$ Similarly, we get $|r_2(k)|< 1$.
\end{proof}
For the reflection coefficients $r_{1,2}(k)$, we have the following results:
\begin{lemma}\label{2.7}
If $q\in L^{2,1}(\R)$ and $\|q\|_{L^1(\R)}<\frac16$, then we have $$r_{1,2}(k)\in H^1(\R).$$
 Moreover, if $q\in H^{1,1}(\R)$, then we have $$r_{1,2}(k)\in L^{2,1}(\R),\ kr_{1,2}(k)\in L^{\infty}(\R),$$
 that is, $r_{1,2}(k)\in\H(\R)$ . As well, the mapping
\begin{equation}
H^{1,1}(\R)\ni q \mapsto (r_1,r_2)\in \H(\R)
\end{equation}
is Lipschitz continuous.
\end{lemma}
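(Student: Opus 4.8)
The plan is to exploit that the reflection coefficients $r_1=b/a$ and $r_2=\overline{b(-k)}/d$ are quotients whose numerators already live in the right spaces by the preceding lemmas, while their denominators are uniformly bounded away from zero. First I would record from Lemma~\ref{2.77} that under $\|q\|_{L^1(\R)}<\frac16$ one has the uniform lower bounds $|a(k)|\ge 1-\frac16 e^{1/3}=:c>0$ and $|d(k)|\ge c>0$ for all $k\in\R$, so that $1/a$ and $1/d$ are bounded on the line. Since $a-1\in H^1(\R)$ by Lemma~\ref{lemaa4} and $H^1(\R)$ is a Banach algebra embedding into $L^\infty(\R)$, the reciprocal obeys $\tfrac1a-1=-\tfrac{a-1}{a}\in H^1(\R)$: its $L^2$ part is controlled by $c^{-1}\|a-1\|_{L^2}$ and its derivative $-a'/a^2$ by $c^{-2}\|a'\|_{L^2}$. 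The same reasoning gives $\tfrac1d-1\in H^1(\R)$.

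For the membership assertions I would write $r_1=b+b(\tfrac1a-1)$; since $b\in H^1(\R)$ and $\tfrac1a-1\in H^1(\R)$, the Banach-algebra property yields $r_1\in H^1(\R)$, and because complex conjugation and the reflection $k\mapsto-k$ preserve $H^1(\R)$, the identical computation with $\overline{b(-k)}$ and $\tfrac1d$ gives $r_2\in H^1(\R)$. Under the stronger hypothesis $q\in H^{1,1}(\R)$, Lemma~\ref{lemaa4} supplies $b\in L^{2,1}(\R)$ and $kb\in L^\infty(\R)$; then $\langle k\rangle r_1=\langle k\rangle b\cdot\tfrac1a$ gives $\|r_1\|_{L^{2,1}}\le c^{-1}\|b\|_{L^{2,1}}$ and $kr_1=(kb)\cdot\tfrac1a$ gives $\|kr_1\|_{L^\infty}\le c^{-1}\|kb\|_{L^\infty}$, so $r_1\in\H(\R)$; since $k\overline{b(-k)}$ has the same $L^\infty$ norm as $kb$ and $\langle k\rangle\overline{b(-k)}$ the same $L^2$ norm as $\langle k\rangle b$, likewise $r_2\in\H(\R)$.

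For the Lipschitz continuity I would fix two admissible potentials $q,\tilde q$ and start from the algebraic identity
\[
r_1-\tilde r_1=\frac{b-\tilde b}{\tilde a}+\frac{b(\tilde a-a)}{a\tilde a},
\]
estimating each summand in the three norms making up $\|\cdot\|_{\H(\R)}$. The lower bounds $|a|,|\tilde a|\ge c$ (valid for both potentials, since each has $L^1$ norm below $\frac16$) control the denominators in $L^\infty$, while $\tfrac1a,\tfrac1{\tilde a}\in 1+H^1(\R)$ control them in $H^1$; the Lipschitz bounds already established for $q\mapsto a-1\in H^1(\R)$ and $q\mapsto b\in\H(\R)$ bound $\|a-\tilde a\|_{H^1}$ and $\|b-\tilde b\|_{\H}$ by $\|q-\tilde q\|_{H^{1,1}(\R)}$. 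The Banach-algebra property of $H^1$ together with the $L^\infty$ bounds on $kb$ and $\tilde a-a$ then close all three estimates, and the same manipulation handles $r_2$.

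I expect the main obstacle to be this last Lipschitz estimate in the full $\H(\R)$ norm: one must simultaneously control the $H^1$, $L^{2,1}$ and weighted $L^\infty$ components of $r_1-\tilde r_1$, and in particular the cross term $\tfrac{b(\tilde a-a)}{a\tilde a}$ forces one to distribute the weight $k$ (respectively $\langle k\rangle$) onto the factor that can absorb it—using $kb\in L^\infty$ for the $L^\infty$ estimate and $\langle k\rangle b\in L^2$ for the $L^{2,1}$ estimate—while keeping the remaining factors bounded. A secondary but essential point is that the uniform lower bound on $|a|$ and $|d|$ must hold for every admissible potential, which is precisely what the small-norm hypothesis $\|q\|_{L^1(\R)}<\frac16$ guarantees through Lemma~\ref{2.77}.
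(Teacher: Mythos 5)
Your proposal is correct and takes essentially the same route as the paper: the uniform lower bounds $|a(k)|,|d(k)|\ge 1-\tfrac16 e^{1/3}>0$ from Lemma \ref{2.77} reduce every membership claim to the properties of $b(k)$ from Lemma \ref{lemaa4}, and the Lipschitz estimate rests on the same quotient decomposition (the paper writes $r_1-\tilde r_1=\frac{b-\tilde b}{a}+\frac{\tilde b(\tilde a-a)}{a\tilde a}$, a trivial variant of your identity). Your write-up simply makes explicit the Banach-algebra and weight-distribution details that the paper's brief proof leaves implicit.
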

\begin{proof}
 Due to
$|a(k)|\geq 1-\frac16e^{1/3}>0$  and $|d(k)|\geq 1-\frac16e^{1/3}>0$ for $k\in\R$,
the property of $r_{1,2}(k)$ follow from the property of $b(k)$.

Let $(r_1,r_2)$ and $(\tilde r_1,\tilde r_2)$ denote the reflection coefficients corresponding to $q$ and $\tilde q$, respectively.
Owing to
\begin{equation}
r_1-\tilde r_1=\frac{b-\tilde b}{a}+\frac{\tilde b((\tilde a-1)-(a-1))}{a\tilde a},
\end{equation}
the Lipschitz continuity of $r_1$ follows from the Lipschitz continuity of $a-1$ and $b$.
\end{proof}

\section{Inverse scattering transform}
\label{sec:section3}
In this section, we will set up a RH problem
 and show  the existence and uniqueness of the solution to the RH problem  for the given data $r_{1,2}(k)\in \H(\R)$ satisfying $|r_{1,2}(k)|<1$.

\subsection{Setup of a RH   problem}
Define the matrix-valued functions
\begin{equation}
    M(x;k)=\left\{\begin{split}
    &\left[ \frac{\varphi_-(x;k)}{a(k)}, \phi_+(x;k) \right],\quad&k\in\C^+\\
    &\left[ \varphi_+(x;k), \frac{\phi_-(x;k)}{d(k)} \right],\quad&k\in\C^-.
    \end{split}\right.
\end{equation}

It follows from the linear dependence (\ref{linear}) and (\ref{symm}) that for $k\in\R$,
\begin{align}
        &\varphi_-=a(k)\varphi_++b(k)e^{+2ikx}\phi_+,\label{s1} \\
        &\phi_-=-\sigma \overline{b(-k)}e^{-2ikx}\varphi_++d(k)\phi_+.\label{s2}
\end{align}
Rewriting (\ref{s2}) as
\begin{equation}
    \phi_+=\frac{\sigma \overline{b(-k)}e^{-2ikx}}{d(k)}\varphi_++\frac{\phi_-}{d(k)}.
    \label{rh1}
\end{equation}
Utilizing (\ref{s1}) and (\ref{rh1}), we have
\begin{equation}
    \begin{split}
        \frac{\varphi_-}{a(k)}=&\varphi_++\frac{b(k)e^{2ikx}}{a(k)}\phi_+ \\
        =&(1+\frac{b(k)}{a(k)}\frac{\sigma\overline{b(-k)}}{d(k)})\varphi_++\frac{b(k)e^{2ikx}}{a(k)}\frac{\phi_-}{d(k)}.
    \end{split}\label{rh2}
\end{equation}
Therefore, for $k\in\R$, we obtain
\begin{equation}
    M_+(x;k)-M_-(x;k)=M_-(x;k)S(x;k),\quad k\in\R,
\label{11}
\end{equation}
where
$$S(x;k)=\left(\begin{array}{cc}
         \sigma r_1(k)r_2(k) & \sigma r_2(k)e^{-2ikx}\\
         r_1(k)e^{2ikx}  &   0
         \end{array}\right).
$$
And the RH problem can be described as

\noindent\textbf{RH problem} Find a $2\times 2$ matrix-valued function M(x;k) with the following properties:
\begin{itemize}
\item $M(x;\cdot):\ \C \backslash\R\rightarrow \C ^{2\times 2}$ is analytic.
\item The limits of $M(x;k)$ as k approaches $\R$ from the upper and lower half-plane exists and are continuous on $\R$, and satisfy
\begin{equation}
    M_+(x;k)-M_-(x;k)=M_-(x;k)S(x;k),\quad k\in\R,
\label{jump}\end{equation}
\item $M(x;k)$ satisfies the asymptotic
$$M(x;k) \rightarrow I,\quad |k|\rightarrow\infty, k\in\C\backslash\R.$$
\end{itemize}
It is convenient to deal with the RH problem to introduce a transformation
$$\Psi_{\pm}(x;k)=M_{\pm}(x;k)-{I},$$
then the jump condition (\ref{11}) can be written as
\begin{align}
    &\Psi_+(x;k)-\Psi_-(x;k)=\Psi_-(x;k)S(x;k)+S(x;k),\quad k\in \R,\label{12}\\
    &\Psi_{\pm}(x;k)\rightarrow 0,\quad |k|\rightarrow \infty, k\in\C\backslash\R.
\end{align}

\subsection{Solvability of the RH problem}

Before looking for the solution to the RH problem, we show some preliminary knowledge that will be used in the subsequent section. And this results have been given in the previous work.\cite{19}

For any function $h\in L^p(\R)$ with $1\leq p<\infty$, the Cauchy operator is defined as
\begin{equation}
    \mathcal C (h)(k):=\frac{1}{2\pi i}\int_{\R} \frac{h(s)}{s-k}\rm ds,\quad k\in \C\backslash \R,
\end{equation}
and the Plemelj projection is given by
\begin{equation}
    \mathcal P^{\pm}(h)(k)=\lim_{\epsilon\downarrow 0}\frac{1}{2\pi i} \int_{\R} \frac{h(s)}{s-(k\pm \epsilon i)}\rm ds,\quad k\in \R.
\end{equation}
\begin{proposition}(see \cite{18}) For every $h\in L^p(\R),\ 1\leq p<\infty $, the Cauchy operator $\mathcal{C}(h)$ is analytic off the real line, decays to zero as $|k|\rightarrow\infty$, and approaches to $\mathcal P^{\pm}(h)$ almost everywhere, when a point $k\in \C^{\pm}$ approaches to a point on the real axis by any non-tangential contour from $\C^{\pm}$. If $1< p<\infty$, then there exists a positive constant $C_p$(with $C_{p=2}=1$) such that \begin{equation}
    \| \mathcal P^{\pm}(h)\|_{L^p}\leq C_p \| h\|_{L^p},
\label{56}\end{equation}
If $h\in L^1(\R)$, then the Cauchy operator admits the following asymptotic limit in either $\C^+$ or $\C^-$:
\begin{equation}
    \lim_{|k|\rightarrow \infty} k\mathcal C(h)(k)=-\frac{1}{2\pi i}\int_R h(s)\d s.
\label{57}\end{equation}
\end{proposition}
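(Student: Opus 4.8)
The statement splits into four essentially independent assertions, each reducing to the classical theory of the Cauchy integral and the Hilbert transform, and the plan is to dispatch them in the order analyticity $\to$ decay $\to$ boundary values and $L^p$ bound $\to$ asymptotic expansion. The only genuinely deep input is the $L^p$ mapping theory of the Hilbert transform; everything else is elementary real analysis, so I would keep the exposition brief and cite the singular-integral result rather than reprove it.

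First, for analyticity I would observe that for fixed real $s$ the kernel $(s-k)^{-1}$ is holomorphic in $k$ on $\C\setminus\R$ with $\partial_k(s-k)^{-1}=(s-k)^{-2}$. On any compact subset of $\C^{\pm}$ the distance $\delta$ to $\R$ is positive, so $|(s-k)^{-2}|\le\delta^{-2}$ and the difference quotients are dominated uniformly; differentiation under the integral sign (dominated convergence, or Morera together with Fubini) then shows $\mathcal C(h)$ is analytic off $\R$. For the decay, if $h\in L^1(\R)$ the crude bound $|\mathcal C(h)(k)|\le\frac{1}{2\pi}\int_{\R}\frac{|h(s)|}{|s-k|}\,\d s\le\frac{1}{2\pi|\mathrm{Im}\,k|}\|h\|_{L^1}$ already forces $\mathcal C(h)(k)\to0$; for $1<p<\infty$ I would first prove decay for compactly supported $h$ (using $|s-k|\ge|k|/2$ for $|k|$ large) and then pass to general $h\in L^p$ by density, controlling the remainder with Hölder's inequality and the scaling $\|(s-k)^{-1}\|_{L^{p'}}=c_p\,|\mathrm{Im}\,k|^{1/p'-1}$.

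The heart of the matter is the non-tangential boundary behavior. Using the elementary splitting $\frac{1}{s-(k\pm i\epsilon)}=\frac{s-k}{(s-k)^2+\epsilon^2}\pm\frac{i\epsilon}{(s-k)^2+\epsilon^2}$, together with the facts that the first term converges to the principal-value kernel while $\frac{\epsilon}{(s-k)^2+\epsilon^2}\to\pi\delta(s-k)$, I would derive the Sokhotski--Plemelj formula $\mathcal P^{\pm}(h)=\pm\tfrac12 h+\tfrac{i}{2}\mathscr H h$, where $\mathscr H$ is the Hilbert transform $\mathscr H h(k)=\frac{1}{\pi}\,\mathrm{p.v.}\int_{\R}\frac{h(s)}{k-s}\,\d s$. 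The a.e. existence of these limits and the bound $\|\mathcal P^{\pm}h\|_{L^p}\le C_p\|h\|_{L^p}$ for $1<p<\infty$ then follow from the corresponding a.e. convergence and strong-type $(p,p)$ boundedness of $\mathscr H$, that is, the M. Riesz theorem. For the sharp constant at $p=2$ I would pass to the Fourier side: $\mathscr H$ has multiplier $-i\,\mathrm{sgn}(\xi)$ of unit modulus, so $\mathcal P^{+}$ acts as multiplication by $\mathbf 1_{\xi>0}$ and $\mathcal P^{-}$ by $-\mathbf 1_{\xi<0}$; both multipliers are bounded by $1$ in modulus, whence Plancherel gives $\|\mathcal P^{\pm}\|_{L^2\to L^2}=1$, i.e. $C_2=1$.

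Finally, for the asymptotic I would use the algebraic identity
\[
k\,\mathcal C(h)(k)+\frac{1}{2\pi i}\int_{\R}h(s)\,\d s=\frac{1}{2\pi i}\int_{\R}\frac{s}{s-k}\,h(s)\,\d s,
\]
and show the right-hand side vanishes as $|k|\to\infty$ in $\C^{\pm}$. Along the imaginary direction (the regime relevant to the $1/k$-expansion of the RH solution, consistent with the $|\mathrm{Im}\,k|\to\infty$ convention in Lemma \ref{2.2}) one has the clean domination $\left|\frac{s}{s-k}\right|\le1$ together with the pointwise limit $\frac{s}{s-k}\to0$, so dominated convergence applies at once; for a general off-axis approach I would split $h=h\mathbf 1_{[-R,R]}+h\mathbf 1_{|s|>R}$, treating the compactly supported piece by dominated convergence and the tail through the smallness of $\|h\mathbf 1_{|s|>R}\|_{L^1}$ against the uniform bound $\frac{|s|}{|s-k|}\le1+\frac{|k|}{|\mathrm{Im}\,k|}$. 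I expect the principal obstacle to be not any of these limits but the invocation of the M. Riesz theorem: the a.e. existence of boundary values and the $L^p$ bound for $p\neq2$ are precisely the statement that $\mathscr H$ is of strong type $(p,p)$, a nontrivial Calder\'on--Zygmund result that I would cite; all remaining steps are routine.
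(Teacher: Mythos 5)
Your proposal cannot be compared line-by-line with the paper, because the paper gives no proof of this proposition at all: it is imported verbatim from Pelinovsky--Shimabukuro \cite{18}, who in turn rest on the classical Hardy-space and singular-integral literature. What you have written is a correct, essentially self-contained reconstruction of that classical chain: differentiation under the integral sign for analyticity; the Sokhotski--Plemelj splitting identifying $\mathcal P^{\pm}=\pm\tfrac12 I+\tfrac{i}{2}\mathscr H$; the M.~Riesz theorem for the $L^p$ bound; Plancherel for the sharp constant $C_2=1$ (your multiplier computation, $\mathcal P^{+}\leftrightarrow\mathbf 1_{\xi>0}$ and $\mathcal P^{-}\leftrightarrow-\mathbf 1_{\xi<0}$, is right); and the identity $k\,\mathcal C(h)(k)+\frac{1}{2\pi i}\int_{\R} h(s)\,\d s=\frac{1}{2\pi i}\int_{\R}\frac{s}{s-k}h(s)\,\d s$ for the asymptotic limit. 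What this buys is independence from \cite{18}, at the cost of invoking the strong-type $(p,p)$ bound for $\mathscr H$ as a black box (which is reasonable; note only that the a.e.\ existence of boundary values for $p=1$ requires the weak-$(1,1)$ theory of $\mathscr H$ rather than the strong-type bound you cite, a point worth a word). One caveat deserves emphasis, and your treatment implicitly contains it: both the decay of $\mathcal C(h)$ and the limit (\ref{57}) are false along arbitrary paths $|k|\to\infty$ that hug the real axis (take $k_n=n+i\epsilon_n$ with $\epsilon_n\downarrow 0$ fast, so that $\mathcal C(h)(k_n)$ tracks $\mathcal P^{+}(h)(n)$, which need not vanish); they hold for $|\mathrm{Im}\,k|\to\infty$ or for non-tangential approach to infinity, which is exactly the regime your dominated-convergence bounds cover ($|s/(s-k)|\le 1$ along the imaginary direction, $|s/(s-k)|\le 1+|k|/|\mathrm{Im}\,k|$ in a sector) and exactly the regime in which the paper uses the proposition, namely the $|\mathrm{Im}\,k|\to\infty$ limits in Lemma \ref{2.2} and the reconstruction formulas (\ref{99})--(\ref{103}). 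So the looseness is in the proposition's own wording, not in your argument.
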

\begin{lemma}\label{5.1} For every $r_{1,2}(k)\in H^1(\R)$ satisfying $|r_{1,2}(k)|<1$.
there exist positive constants $c_-$ and $c_+$ such that for every $x\in\R$ and every column-vector $g\in \C^2$
\begin{equation}
    \rm Re\ g^*( I+S(x;k))g\geq c_-g^*g,\quad k\in\R
\label{60}\end{equation}
and
\begin{equation}
    \|(I+S(x;k))g\|\leq c_+\|g \|,\quad k\in\R,
\label{61}
\end{equation}
where the asterisk denotes the Hermite conjugate.
\end{lemma}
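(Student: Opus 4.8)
The two inequalities are of quite different character: (\ref{61}) is a routine uniform operator bound, while (\ref{60}) is the coercivity estimate that carries the real content and is what the subsequent vanishing lemma will need. For (\ref{61}) the plan is simply to observe that, since $r_{1,2}\in H^1(\R)\hookrightarrow L^{\infty}(\R)$, the entries of $S(x;k)$ are bounded uniformly in both $x$ and $k$: the factors $e^{\pm 2ikx}$ have modulus one and $|r_j(k)|\le\|r_j\|_{L^{\infty}}<\infty$. Hence the operator norm $\|I+S(x;k)\|$ is dominated by a constant independent of $x$ and $k$, which we take to be $c_+$.

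For (\ref{60}) I would first rewrite $\mathrm{Re}\,g^*(I+S)g=g^*Hg$, where $H(x;k)=\tfrac12\big((I+S)+(I+S)^*\big)$ is the Hermitian part of $I+S$. A direct computation gives
\[
H(x;k)=\begin{pmatrix} 1+\sigma\,\mathrm{Re}(r_1r_2) & \tfrac12 e^{-2ikx}(\sigma r_2+\bar r_1)\\ \tfrac12 e^{2ikx}(r_1+\sigma\bar r_2) & 1\end{pmatrix}.
\]
The crucial structural observation is that the diagonal entries and the \emph{modulus} of the off-diagonal entry are independent of $x$, since the phases $e^{\pm 2ikx}$ cancel in $|H_{12}|$. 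Consequently the eigenvalues of $H$ depend on $k$ only, and any lower bound obtained will be automatically uniform in $x$; the whole matter reduces to showing that the $2\times2$ Hermitian matrix $H(k)$ is uniformly positive definite.

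Next I would control the two quantities governing positivity of a $2\times2$ Hermitian matrix. The $(1,1)$ entry satisfies $1+\sigma\,\mathrm{Re}(r_1r_2)\ge 1-|r_1||r_2|>0$, and a short computation using $|\sigma r_2+\bar r_1|^2=|r_1|^2+|r_2|^2+2\sigma\,\mathrm{Re}(r_1r_2)$ yields
\[
\det H = 1+\tfrac12\sigma\,\mathrm{Re}(r_1r_2)-\tfrac14\big(|r_1|^2+|r_2|^2\big)\ge 1-\tfrac12\big(|r_1|^2+|r_2|^2\big),
\]
where the last step uses $|\mathrm{Re}(r_1r_2)|\le|r_1||r_2|\le\tfrac12(|r_1|^2+|r_2|^2)$.

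The main step, and the one genuine obstacle, is to upgrade the pointwise bound $|r_{1,2}(k)|<1$ to a uniform strict bound; the pointwise statement alone is insufficient because the suprema could equal $1$ and annihilate the coercivity. Here the $H^1$-regularity is decisive: by the Sobolev embedding $H^1(\R)\hookrightarrow C_0(\R)$ each $r_j$ is continuous and vanishes at infinity, so $\rho_j:=\sup_{k\in\R}|r_j(k)|$ is \emph{attained}, whence $\rho_j<1$. Therefore $\det H\ge 1-\tfrac12(\rho_1^2+\rho_2^2)=:\delta>0$ uniformly in $x$ and $k$. Since the trace $\mathrm{tr}\,H=2+\sigma\,\mathrm{Re}(r_1r_2)\le 3$ is bounded above while $\det H\ge\delta$, and both eigenvalues are positive, the smallest eigenvalue obeys $\lambda_{\min}=\det H/\lambda_{\max}\ge\delta/\mathrm{tr}\,H\ge c_->0$, giving $\mathrm{Re}\,g^*(I+S)g=g^*Hg\ge c_-\,g^*g$, which is precisely (\ref{60}). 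I expect the algebraic estimate on $\det H$ and the operator bound to be entirely routine, with the passage from $|r_{1,2}|<1$ to $\sup_k|r_{1,2}|<1$ via the embedding being the one point that genuinely must be argued.
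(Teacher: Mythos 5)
Your proof is correct and follows essentially the same route as the paper: both pass to the Hermitian part of $I+S(x;k)$, verify pointwise positive definiteness via the $2\times 2$ principal minors (your $\det H$ identity is exactly the paper's minor computation), and invoke the embedding $H^1(\R)\hookrightarrow C_0(\R)$ to upgrade the pointwise positivity to a uniform bound, with the upper bound (\ref{61}) following from the uniform boundedness of the entries of $S$. The only cosmetic difference is in how the uniform constant $c_-$ is extracted: the paper uses the explicit smaller eigenvalue $\mu_-(k)$ together with continuity and the limit $\mu_-(k)\to 1$ as $|k|\to\infty$, whereas you use attainment of $\sup_k|r_j(k)|<1$ and the bound $\lambda_{\min}\geq \det H/\operatorname{tr} H$, which is arguably a more quantitative rendering of the same compactness argument.
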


\begin{proof}
The original scattering matrix $S(x;k)$ is not Hermitian due to the fact there is no symmetry between $a(k)$ and $d(k)$. Hence  it is difficult to use the theory of Zhou \cite{20} to obtain a unique solution to the RH problem.
Therefore, we define Hermitian part of $S(x;k)$ by
\begin{equation}
\begin{split}
    S_H(x;k)&=\frac 12 S(x;k)+\frac12 S^*(x;k)\\
    &=\left( \begin{array}{cc}
    \sigma \rm Re(r_1r_2) & \frac12(\bar r_1+\sigma r_2)e^{-2ikx}\\
    \frac 12(r_1+\sigma \bar{r}_2)e^{2ikx} & 0
    \end{array}\right),
\end{split}
\end{equation}
Since $|r_{1,2}(k)|<1$, we have the 2-order principle minor of the matrix $I+S_H$
$$1+\sigma\rm{Re}(r_1r_2)-\frac14|r_1+\sigma\bar r_2|^2=1-\frac14|r_1-\sigma\bar r_2|^2>0,$$
further, we have the 1-order  principle minor $1+\sigma\rm{Re}(r_1r_2)>0$. As a result, the matrix $I+S_H$ is positive definite.

In view of the algebra theory, for a Hermitian matrix, there exists an unitary matrix A such that $$A^*(I+S_H)A={\rm diag}(
\mu_+, \mu_- ),$$
where $\mu_{\pm}$ are the eigenvalues of the matrix $I+S_H$
\begin{equation}
    \mu_{\pm}(k)=\frac{2+\sigma \rm{Re}(r_1r_2)\pm\sqrt{\rm{Re}^2(r_1r_2)+|r_1+\sigma \bar{r}_2|^2}}{2}.
    \nonumber
\end{equation}
    Note $\mu_+(k)>\mu_-(k)>0$ since $I+S_H$ is positive definite. It follows from $r_{1,2}(k)\rightarrow 0$  that $\mu_-(k)\rightarrow 1$ as $|k|\rightarrow\infty,\ k\in\R$. Due to  $r_{1,2}(k)\in H^1(\R)$, there exists a constant $c_->0$ such that $\mu_->c_->0$.

Consequently, for every $g\in\C^2$, let $\tilde g=A^{-1}g$, we have
\begin{equation}
    c_-g^*g<\mu_-g^*g\leq\rm Re\ g^*( I+S(x;k))g=g^*(I+S_H)g=\tilde g^*A^*(I+S_H)A\tilde g.
\end{equation}

Finally, calculating componentwise gives that
\begin{equation}
    \begin{split}
       \|(I+S(x;k))g \|^2\leq & 2(1+|r_1 |^2+|r_2 |^2)\| g \|^2\\
       &+2 \rm{Re}\{ ((1+r_1r_2)\bar r_2+r_1)e^{2ikx}g^{(1)}\overline{g^{(2)}}\}\\
       \leq&\left(2(1+|r_1 |^2+|r_2 |^2)(1+| r_2|)+|r_1 |\right)\| g \|^2,
    \end{split}
\end{equation}
here the norm for a 2-component vector $f$ is $\|f\|^2=| f^{(1)}|^2+| f^{(2)}|^2$.
Let
$$c_+=\sup_{k\in\R}\left(\sqrt{2(1+|r_1 |^2+|r_2 |^2)(1+| r_2|)+|r_1 |}\right)<+\infty,$$
we obtain the bound (\ref{61}).
\end{proof}
\begin{lemma}\label{5.2} For every $r_{1,2}(k)\in H^1(\R)$ satisfying $|r_{1,2}(k)|<1$ and every $F(k)\in L^2_k(\R)$, there exists a unique solution $\Psi(k)\in L_k^2(\R)$ of the equation
\begin{equation}
(\rm I-\P_S^-)\Psi(k)=F(k),\ k\in\R,
\end{equation}
where $\P_S^-\Psi(k)=\P^-(\Psi S)$.
\end{lemma}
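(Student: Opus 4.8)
The plan is to prove that $I-\mathcal{P}_S^-$ is a bounded operator on $L^2_k(\R)$ that is bounded below, and whose adjoint has trivial kernel; the closed range theorem then delivers bijectivity together with a bounded inverse. Boundedness is immediate: since $r_{1,2}\in H^1(\R)\hookrightarrow L^\infty(\R)$, the matrix $S(x;\cdot)$ lies in $L^\infty$, so $\|\mathcal{P}_S^-\Psi\|_{L^2}=\|\mathcal{P}^-(\Psi S)\|_{L^2}\le\|\Psi S\|_{L^2}\le\|S\|_{L^\infty}\|\Psi\|_{L^2}$, where I used $C_{p=2}=1$ from the preceding proposition. It therefore suffices to establish the a priori estimate $\|\Psi\|_{L^2}\le C\|F\|_{L^2}$ whenever $(I-\mathcal{P}_S^-)\Psi=F$.

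To derive this estimate I would introduce the Cauchy transform $\Omega(k)=\mathcal{C}(\Psi S)(k)$, analytic off $\R$, decaying at infinity, with boundary values $\Omega_\pm=\mathcal{P}^\pm(\Psi S)$. From $\Psi-\mathcal{P}^-(\Psi S)=F$ and the identity $\mathcal{P}^+-\mathcal{P}^-=I$ one reads off $\Omega_-=\Psi-F$ and $\Omega_+=\Psi(I+S)-F$. The crucial input is the orthogonality of the Plemelj projections, $\mathcal{P}^+\mathcal{P}^-=0$ (the $L^2$ Hardy-space splitting), which gives $\int_\R\Omega_+\Omega_-^{*}\,dk=0$. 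Expanding this identity, taking real parts, and estimating termwise, I would bound the quadratic term $\mathrm{Re}\int_\R\Psi(I+S)\Psi^{*}\,dk$ from below by $c_-\|\Psi\|_{L^2}^2$ using (\ref{60}) with $g=\Psi^{*}$, and bound the two cross terms from above by $(c_++1)\|\Psi\|_{L^2}\|F\|_{L^2}$ using (\ref{61}) and the Cauchy--Schwarz inequality, the term $-\|F\|_{L^2}^2$ being discarded. This yields $\|\Psi\|_{L^2}\le\frac{c_++1}{c_-}\|F\|_{L^2}$, hence injectivity and closed range of $I-\mathcal{P}_S^-$.

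It then remains to prove surjectivity, for which I would show that the adjoint has trivial kernel. Writing $\mathcal{P}_S^-=\mathcal{P}^-R_S$ with $R_S$ right multiplication by $S$, and using $(\mathcal{P}^-)^{*}=\mathcal{P}^-$ together with $R_S^{*}=R_{S^{*}}$, one obtains $(I-\mathcal{P}_S^-)^{*}=I-R_{S^{*}}\mathcal{P}^-$. If $\Phi$ satisfies $\Phi=(\mathcal{P}^-\Phi)S^{*}$, then $\chi:=\mathcal{P}^-\Phi$ obeys $\chi=\mathcal{P}^-(\chi S^{*})$, i.e. the homogeneous equation for $I-\mathcal{P}_{S^{*}}^-$. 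Since $\mathrm{Re}\,g^{*}(I+S^{*})g=\mathrm{Re}\,g^{*}(I+S)g$ and $\|I+S^{*}\|=\|I+S\|$, the matrix $S^{*}$ satisfies the very same bounds (\ref{60})--(\ref{61}), so the a priori estimate applies and forces $\chi=0$, whence $\Phi=0$. Thus the range is dense; being also closed, it is all of $L^2_k(\R)$, and $I-\mathcal{P}_S^-$ is bijective with $\|(I-\mathcal{P}_S^-)^{-1}\|\le\frac{c_++1}{c_-}$. The main obstacle is the rigorous justification of $\int_\R\Omega_+\Omega_-^{*}\,dk=0$: one must invoke the orthogonality $\mathcal{P}^+\mathcal{P}^-=0$ of the Plemelj projections, verify that $\Psi S\in L^2$ so that $\Omega_\pm\in L^2$ and the product $\Omega_+\Omega_-^{*}\in L^1$, and keep the $2\times2$ vector conventions consistent so that (\ref{60}) and (\ref{61}) may be applied row by row with $g=\Psi^{*}$.
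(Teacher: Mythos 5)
Your proof is correct, and it reaches the conclusion by a genuinely different functional-analytic route than the paper. The paper first asserts (citing Beals--Coifman) that $I-\P_S^-$ is a Fredholm operator of index zero, so that by the Fredholm alternative unique solvability for every $F$ reduces to triviality of the kernel of the homogeneous equation; it then kills the kernel by setting $g_1=\mathcal C(gS)$, $g_2=\mathcal C(gS)^*$, integrating $g_1g_2$ over a large semicircle, and using Cauchy's theorem together with the decay (\ref{57}) to arrive at $\int_{\R}g(I+S)g^*\,\d k=0$, whence $g=0$ by the positivity bound (\ref{60}) of Lemma \ref{5.1}. Your injectivity step is the same vanishing mechanism: your identity $\int_{\R}\Omega_+\Omega_-^*\,\d k=0$ is exactly what the paper's contour integration produces, only you quote the $L^2$ Hardy-space orthogonality $\P^+\P^-=0$ instead of deriving it by contour deformation, and you keep the inhomogeneous term $F$ in the computation, which upgrades the vanishing lemma to the quantitative a priori bound $\|\Psi\|_{L^2}\le c_-^{-1}(c_++1)\|F\|_{L^2}$. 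The genuine divergence is surjectivity: where the paper outsources this to the index-zero Fredholm property, you obtain closed range from the lower bound and dense range from triviality of the kernel of the adjoint $I-R_{S^*}\P^-$, the key observation being that the bounds (\ref{60}) and (\ref{61}) are invariant under $S\mapsto S^*$, so the same estimate applies verbatim to the adjoint's homogeneous equation; this observation is correct and is what makes the argument close. What the paper's route buys is brevity, at the cost of an external citation for the Fredholm property; what your route buys is self-containedness and, as a by-product, the explicit inverse bound that the paper proves separately in Lemma \ref{3.3} (estimate (\ref{92})) by the more laborious decomposition $\Psi=\Psi_+-\Psi_-$ with two separate contour arguments.
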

\begin{proof}
Since $\rm I-\P_S^-$ is a Fredholm operator of the index zero,\cite{21}\cite{22} by Fredholm's alternative theorem, there exists a unique solution of the equation $(\rm I-\P_S^-)\Psi(k)=F(k)$ if and only if the zero solution of the equation $(\rm I-\P_S^-)g=0  $ is the unique solution in $L^2_k(\R)$.

Assume that there exists a function $g(k)\in L^2_k(\R)$ and $g(k)\neq 0$ such that $(\rm I-\P_S^-)g=0 $.
Define two analytic functions in $\mathbb C\backslash \mathbb R$
\begin{equation}
    g_1(k)=\mathcal C(gS)(k), \quad g_2(k)=\mathcal C(gS)^*(k).
\end{equation}
 The functions $g_1(k)$ and $g_2(k)$ are well-defined due to $S(k)\in L_k^2(\mathbb R)\cap L_k^{\infty}(\mathbb R)$.

 We integrate the function $g_1(k)g_2(k)$ along the semi-circle of radius R centered at zero in $\mathbb C^+$, it follows from Cauchy theorem that
\begin{equation}
    \oint g_1(k)g_2(k)\d k=0.
\end{equation}
Since $g(k)S(k)\in L_k^1(\mathbb R)$, by using (\ref{57}), we have $g_{1,2}=\mathcal O(k^{-1}),\ |k|\rightarrow \infty$. Thus, the integral on the arc tends to zero as the radius tends to infinity. Therefore, we obtain
\begin{equation}
    \begin{split}
        0&=\int_{\R} g_1(k)g_2(k) \d k =\int_{\R} \mathcal P^+(gS)[\mathcal P^-(gS)]^*\rm dk\\
        &=\int_{\R}[P^-(gS)+gS][P^-(gS)]^*\rm dk,
    \end{split}
\end{equation}
Utilizing the assumption $\P^-(gS)=g$, we have
\begin{equation}
\int_{\R} g(I+S)g^*\d k=0.
\end{equation}
 By Lemma 5.1, we have $\rm{Re}\ g(I+S)g^*>c_-g^*g$ with $c_-$ is a positive constant. Hence the function g(k) has to be zero function. This contradicts to the assumption $g\neq 0$. As a result, $g=0$ is a unique solution to the equation $(I-\P^-_S)g=0$ in $L^2_k(\R)$. Finally, there exists a unique solution to the equation $(I-\P_S^-)\Psi(k)=F(k)$.
\end{proof}
\begin{lemma}\label{th5.1}
 For $r_{1,2}(k)\in H^1(\rm{R})\cap L^{2,1}(\rm{R})$ satisfying $|r_{1,2}(k)|<1$ and for every $x\in\R$,
there exists a unique solution $\Psi_{\pm}(x;k)\in L^2_k(\R)$ of the problem
$$\Psi_+(x;k)-\Psi_-(x;k)=\Psi_-S(x;k)+S(x;k),\quad k\in\R $$
Moreover, $\Psi_{\pm}(x;k)$ are analytic functions for $k\in\C^{\pm}$.
\end{lemma}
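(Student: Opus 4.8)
The plan is to reformulate the additive (inhomogeneous) RH jump problem as a singular integral equation for the minus boundary value $\Psi_-$, and then invoke the unique solvability already established in Lemma \ref{5.2}. Since $x$ enters only as a fixed parameter, I suppress it in the notation below.

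First I would record that $S(\cdot)\in L^1_k(\R)\cap L^2_k(\R)$. Indeed, up to the constant $\sigma$ the entries of $S$ are $r_1r_2$, $r_2e^{-2ikx}$ and $r_1e^{2ikx}$, and since $r_{1,2}\in H^1(\R)\subset L^{\infty}(\R)$ while $r_{1,2}\in L^{2,1}(\R)\hookrightarrow L^1(\R)\cap L^2(\R)$, the product $r_1r_2$ and each single entry lie in $L^1\cap L^2$; the unimodular factors $e^{\pm 2ikx}$ do not change the norms. Consequently $\mathcal P^-(S)\in L^2_k(\R)$ by the Plemelj projection bound (\ref{56}) with $C_{p=2}=1$. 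I then set $F(k):=\mathcal P^-(S)(k)\in L^2_k(\R)$ and, by Lemma \ref{5.2}, obtain the unique solution $\Psi_-\in L^2_k(\R)$ of
$$(\mathrm I-\mathcal P^-_S)\Psi_-=F,\qquad \mathcal P^-_S\Psi_-:=\mathcal P^-(\Psi_-S).$$

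Next I would build the solution explicitly as a Cauchy transform. Define the analytic function
$$\Psi(k):=\mathcal C(\Psi_-S+S)(k),\qquad k\in\C\setminus\R,$$
whose non-tangential boundary values are $\Psi_{\pm}=\mathcal P^{\pm}(\Psi_-S+S)$. By construction $\mathcal P^-(\Psi_-S+S)=\mathcal P^-_S\Psi_-+\mathcal P^-(S)=\Psi_-$, so the minus boundary value is consistent, and the Plemelj jump identity $\mathcal P^+-\mathcal P^-=\mathrm I$ yields $\Psi_+-\Psi_-=\Psi_-S+S$, which is exactly (\ref{12}). The analyticity of $\Psi_{\pm}$ in $\C^{\pm}$ is immediate since $\mathcal C$ produces a function analytic off $\R$. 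Moreover $\Psi_-S\in L^1_k$ (a product of two $L^2_k$ functions) and $S\in L^1_k$, so the jump $\Psi_-S+S\in L^1_k(\R)$, and the asymptotic formula (\ref{57}) gives $\Psi(k)=\mathcal O(k^{-1})$, hence $\Psi_{\pm}\to 0$ as $|k|\to\infty$.

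For uniqueness I would take two $L^2_k$ solutions, analytic in $\C^{\pm}$ and vanishing at infinity, and subtract them. The difference $\Psi$ and the Cauchy transform $\mathcal C(\Psi_+-\Psi_-)$ of its own jump differ by a function analytic across $\R$ that decays at infinity, hence vanish identically by Liouville's theorem; taking the minus boundary value then shows that $\Psi_-$ solves the homogeneous equation $(\mathrm I-\mathcal P^-_S)\Psi_-=0$, so the uniqueness part of Lemma \ref{5.2} forces $\Psi_-\equiv 0$ and therefore $\Psi_+\equiv 0$. I expect the main obstacle to be justifying precisely this equivalence between the RH jump problem and the singular integral equation — that an $L^2_k$ solution which is analytic and decaying is recovered as the Cauchy transform of its jump. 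This step rests on the integrability $\Psi_-S+S\in L^1_k$ established above, together with the decay (\ref{57}) and the projection bound (\ref{56}), which are exactly the ingredients packaged in the cited propositions and in Lemma \ref{5.2}.
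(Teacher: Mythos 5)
Your proposal follows essentially the same route as the paper's own proof: solve the singular integral equation $(\mathrm{I}-\mathcal{P}^-_S)\Psi_-=\mathcal{P}^-(S)$ via Lemma \ref{5.2}, define $\Psi_+=\mathcal{P}^+(\Psi_-S+S)$ so that the jump relation follows from $\mathcal{P}^+-\mathcal{P}^-=\mathrm{I}$, and obtain analyticity by extending both boundary values as the Cauchy transform $\mathcal{C}(\Psi_-S+S)$. The only difference is that you also spell out the uniqueness of the Riemann--Hilbert solution itself (the Liouville argument recovering an analytic, decaying $L^2_k$ solution as the Cauchy transform of its own jump), a step the paper leaves implicit by relying solely on the uniqueness in Lemma \ref{5.2}; this is a useful completion rather than a different method.
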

\begin{proof}
 Owing to $S(x;k)\in L^2_k(\R)$, we have $\P^-(S)(k)\in L^2_k(\R)$ by (\ref{56}), hence,  we infer from Lemma \ref{5.2} that there exists a unique solution $\Psi_-(x;k)\in L^2_k(\R)$ to the  problem for every $x\in\R$
 \begin{equation}
      \Psi_-(x;k)=\P^-(\Psi_-(x;k)S(x;k)+S(x;k)),\quad k\in\R.\label{psi-}
 \end{equation}
 Then we define a function $\Psi_+(x;k)$ by
 \begin{equation}
     \Psi_+(x;k)=\P^+(\Psi_-(x;k)S(x;k)+S(x;k)),\quad k\in\R,\label{psi+}
 \end{equation}
 And analytic extensions of $\Psi_{\pm}(x;k)$ to $k\in \C^{\pm}$ are defined by Cauchy integrals
 \begin{equation}
     \Psi_{\pm}(x;k)=\mathcal C(\Psi_-(x;k)S(x;k)+S(x;k)),\quad k\in\C^{\pm}.
 \end{equation}
 Finally we obtain the solution $\Psi_{\pm}(x;k)\in L^2_k(\R)$ of the  problem. Moreover, given the property of the Cauchy operator and projection operator, the solutions $\Psi_{\pm}(x;k)$ are analytic functions for $k\in\C^{\pm}$.
\end{proof}
\begin{lemma}
For every $r_{1,2}(k)\in H^1(\R)$ satisfying $|r_{1,2}(k)|<1$, the operator $(I-\P_S^-)^{-1}$ is bounded from $L_k^2(\R)$ to $L_k^2(\R)$, and there exists a constant $c$ that only depends on $\| r_{1,2}(k)\|_{L_k^{\infty}}$ such that
\begin{equation}
\|(I-\P_S^-)^{-1} f \|_{L_k^2}\leq c\| f\|_{L_k^2}
\label{92}\end{equation}\label{3.3}
\end{lemma}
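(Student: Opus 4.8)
The plan is to upgrade the qualitative invertibility of Lemma~\ref{5.2} to a quantitative bound through an energy estimate, exploiting the pointwise coercivity and boundedness of $I+S(x;k)$ recorded in Lemma~\ref{5.1}. Given $f\in L^2_k(\R)$, let $\Psi=(I-\P_S^-)^{-1}f$ be the unique solution guaranteed by Lemma~\ref{5.2}, so that $\P^-(\Psi S)=\Psi-f$. First I would introduce the two boundary values $\Psi_-:=\P^-(\Psi S)$ and $\Psi_+:=\P^+(\Psi S)$, which by the Plemelj jump relation $\P^+-\P^-=I$ satisfy $\Psi_+-\Psi_-=\Psi S$; combining this with the equation gives the crucial identities $\Psi_-=\Psi-f$ and $\Psi_+=\Psi(I+S)-f$.

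The heart of the argument is the orthogonality relation $\int_\R \Psi_+\Psi_-^*\,\d k=0$. This is exactly the contour computation already carried out in Lemma~\ref{5.2}: since $\Psi S\in L^1_k(\R)$ (as $\Psi\in L^2_k$ and $S\in L^2_k\cap L^\infty_k$), the Cauchy transforms $\mathcal C(\Psi S)$ decay like $\mathcal O(k^{-1})$ by (\ref{57}), so integrating the product of the $\C^+$-analytic extension against the conjugate of the $\C^-$-analytic extension over a large semicircle and letting the radius tend to infinity yields $\int_\R \P^+(\Psi S)[\P^-(\Psi S)]^*\,\d k=0$. Substituting the identities above and expanding gives
$$\int_\R \Psi(I+S)\Psi^*\,\d k=\int_\R \Psi(I+S)f^*\,\d k+\int_\R f\Psi^*\,\d k-\int_\R ff^*\,\d k.$$
Taking real parts, the left-hand side is bounded below by $c_-\|\Psi\|_{L^2}^2$ thanks to the coercivity (\ref{60}), while the right-hand side is estimated by Cauchy--Schwarz together with the operator bound (\ref{61}): the three terms are controlled by $c_+\|\Psi\|_{L^2}\|f\|_{L^2}$, $\|\Psi\|_{L^2}\|f\|_{L^2}$, and $\|f\|_{L^2}^2$ respectively.

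This produces the quadratic inequality
$$c_-\|\Psi\|_{L^2}^2\le (c_++1)\|\Psi\|_{L^2}\|f\|_{L^2}+\|f\|_{L^2}^2,$$
and solving for $\|\Psi\|_{L^2}$ gives $\|\Psi\|_{L^2}\le c\|f\|_{L^2}$ with $c=\big((c_++1)+\sqrt{(c_++1)^2+4c_-}\big)/(2c_-)$, which establishes (\ref{92}). Since Lemma~\ref{5.1} shows $c_\pm$ depend only on $\|r_{1,2}\|_{L^\infty_k}$, the constant $c$ does too. The only delicate point is the justification of the orthogonality relation --- specifically the vanishing of the arc contribution and the identification of the boundary values of the conjugated Cauchy transform --- but this is precisely the mechanism already validated in the proof of Lemma~\ref{5.2}, so it can be invoked directly rather than reproved.
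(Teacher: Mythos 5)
Your proof is correct, and it reaches (\ref{92}) by a genuinely more streamlined route than the paper's. The paper first splits $f=\P^+f-\P^-f$ and, invoking Lemma~\ref{5.2} a second time, decomposes the solution uniquely as $\Psi=\Psi_+-\Psi_-$ with $(I-\P_S^-)\Psi_\pm=\P^\pm f$; it then runs the contour-integral orthogonality argument \emph{twice} --- once pairing $\mathcal C(\Psi_-S)$ against $\mathcal C(\Psi_-S+f)^*$, once pairing $\mathcal C(\Psi_+S)$ against $\mathcal C(\Psi_+S+f)^*$ --- obtaining the two separate bounds (\ref{f1}) and (\ref{f2}), which are finally combined by the triangle inequality into $c=c_-^{-1}(1+c_+)$. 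You avoid the decomposition entirely: because the equation itself expresses both boundary values of $\mathcal C(\Psi S)$ in terms of $\Psi$ and $f$, namely $\P^-(\Psi S)=\Psi-f$ and $\P^+(\Psi S)=\Psi(I+S)-f$, the single orthogonality relation $\int_\R \P^+(\Psi S)[\P^-(\Psi S)]^*\,\d k=0$ (justified exactly as in Lemma~\ref{5.2}, since $\Psi S\in L^1_k(\R)$ and (\ref{57}) gives $O(k^{-1})$ decay) closes the argument in one energy estimate, using (\ref{60}) on the left and (\ref{61}) plus Cauchy--Schwarz on the right. What your version buys: one contour computation instead of two, no appeal to the uniqueness of the $\Psi_\pm$ decomposition, and no need for the $L^2$-boundedness of $\P^\pm$ acting on $f$; what the paper's buys: the individual estimates (\ref{f1})--(\ref{f2}) for $(I-\P_S^-)^{-1}\P^\pm$ and a marginally cleaner constant. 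On that last point, your quadratic inequality is avoidable: since $\mathrm{Re}\bigl(-\int_\R ff^*\,\d k\bigr)=-\|f\|_{L^2}^2\le 0$, you may simply drop that term and obtain the linear bound $c_-\|\Psi\|_{L^2}^2\le (c_++1)\|\Psi\|_{L^2}\|f\|_{L^2}$, i.e. $c=(c_++1)/c_-$, which coincides with the paper's constant.
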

\begin{proof}
For every $f(k)\in L_k^2(\R)$, it follows from Lemma \ref{5.2} that there exists a solution $\Psi(k)\in L_k^2(\R)$ to $(I-\P_S^-)\Psi(k)=f(k)$. Note that $\P^+-\P^{-}=I$, we decompose the function $\Psi(k)$ into $\Psi=\Psi_+-\Psi_-$ with
\begin{equation}
\Psi_--\P^-(\Psi_-S)=\P^-(f),\quad \Psi_+-\P^-(\Psi_+S)=\P^+(f).
\label{69}
\end{equation}
Since $\P^{\pm}(f)\in L_k^2(\R)$, it follows from Lemma \ref{5.2} that there exists unique solutions for the equation (\ref{69}),  hence, the decomposition is unique.
Therefore, we only need the estimates of $\Psi_{\pm}$ in $L^2_k(\R)$.

To deal with $\Psi_-$, define two analytic functions in $\C\backslash\R$
$$g_1(k)=\mathcal C(\Psi_-S)(k),\quad g_2(k)=\mathcal C(\Psi_-S+f)^*(k),$$
Analogous manipulation as the proof of Lemma \ref{5.2}, we integrate on the semi-circle in the upper half-plane and have
\begin{equation}
    \oint g_1(k)g_2(k)\d k=0.
\end{equation}
Since $g_1(k)=\mathcal O(k^{-1})$ and $g_2(k)\rightarrow 0$ as $|k|\rightarrow\infty$, we have
\begin{equation}
    \begin{split}
        0=&\int_{\R}\P^+(\Psi_-S)[\P^-(\Psi_-S+f)]^* \d k\\
        =&\int_{\R}(\P^-(\Psi_-S)+\Psi_- S)[\P^-(\Psi_-S+f)]^* \d k\\
        =&\int_{\R}(\Psi_--\P^-(f)+\Psi_-S)\Psi_-^* \d k,
    \end{split}
\end{equation}
By the bound (\ref{60}) and the H\"{o}lder inequality, there exists a positive constant $C_-$ such that
\begin{equation}
    c_-\| \Psi_-\|_{L^2}^2\leq \rm{Re} \int_{\R} \Psi_-(I+S)\Psi_-^* \d k=\rm{Re}\int_{\R} \P^-(f)\Psi_-^*\d k\leq \| f\|_{L^2}\| \Psi_-\|_{L^2},
\end{equation}
this completes the estimates of $\Psi_-$:
\begin{equation}
 \|(I-\P_S^{-})^{-1}\P^-f \|_{L^2_k}\leq c_-^{-1}\| f\|_{L^2_k}.
    \label{f1}
\end{equation}
To deal with $\Psi_+$, define two functions in $\C\backslash\R$
$$g_1(k)=\mathcal C(\Psi_+S)(k),\quad g_2(k)=\mathcal C(\Psi_+S+f)^*(k).$$
Performing the similar procedure leads to
\begin{equation}
    \begin{split}
        0=&\oint g_1(k)g_2(k) \d k\\
        =&\int_{\R} \P^-(\Psi_+S)[\P^+(\Psi_+S+f)]^* \d k\\
        =&\int_{\R}[\Psi_+-\P^+(f)][\Psi_+(I+S)]^* \d k,
        \end{split}
\end{equation}
where we have used (\ref{69}).
By the bounds (\ref{60}) and (\ref{61}) in Lemma \ref{5.1}, there are positive constants $c_-$ and $c_+$ such that
\begin{equation}
    c_-\| \Psi_+\|_{L^2}^2\leq \rm{Re} \int_{\R} \Psi_+(I+S)^*\Psi_+^* \d k=\rm{Re} \int_{\R}\P^+(f)(I+S)^*\Psi_+^* \d k\leq c_+\|f \|_{L^2}\| \Psi_+\|_{L^2},
    \end{equation}
which means
\begin{equation}
    \| (I-\P_S^-)^{-1}\P^+f\|_{L^2_k}\leq c_-^{-1}c_+\| f\|_{L^2_k}.\label{f2}
\end{equation}
Combining (\ref{f1}) and (\ref{f2}), we obtain
$$\|(I-\P_S^-)^{-1} f \|_{L_k^2}\leq c\| f\|_{L_k^2}.$$
\end{proof}

\subsection{Estimate on solution to  the RH problem}

Next, we come back to the original RH problem about $M(x;k)$. Denote the functions $M_{\pm}$ column-wise
$$M_{\pm}(x;k)=[\mu_{\pm}(x;k), \nu_{\pm}(x;k)].$$

 We write column-wise the functions $\Psi_{\pm}$
$$\Psi_{\pm}(x;k)=[\mu_{\pm}(x;k)-e_1, \nu_{\pm}(x;k)-e_2 ].$$
According to (\ref{psi-}) and (\ref{psi+}),  we have the first column
\begin{equation}
    \mu_{\pm}(x;k)-e_1=\P^{\pm}(\psi_- S+S)^{(1)}(x;k)=\P^{\pm}(M_-S)^{(1)}(x;k),\quad k\in{\R},
\end{equation}
and the second column
\begin{equation}
    \nu_{\pm}(x;k)-e_2=\P^{\pm}(\psi_- S+S)^{(2)}(x;k)=\P^{\pm}(M_-S)^{(2)}(x;k),\quad k\in{\R}.
\end{equation}
Thus we obtain
\begin{equation}
M_{\pm}(x;k)={I}+\P^{\pm}(M_-(x;\cdot)S(x;\cdot))(k),\quad k\in\R,
\label{pleme}\end{equation}
furthermore,
\begin{equation}
M_{\pm}(x;k)={I}+\mathcal C (M_-(x;\cdot)S(x;\cdot))(k),\quad k\in{\C}^{\pm},
\label{79}\end{equation}
By Lemma \ref{th5.1}, there exists a unique solution $M_{\pm}(x;k)$ to (\ref{pleme}) and (\ref{79}).
\begin{lemma}\label{3.4} Let $r_{1,2}\in H^1(\R)\cap L^{2,1}(\R)$ satisfy $|r_{1,2}(k)|<1$, then there exists a constant $c$ only depending on $\| r_{1,2}\|_{L^{\infty}}$ such that for every $x\in\R$, the solution $M_{\pm}(x;k)$ satisfies
\begin{equation}
    \| M_{\pm}(x;\cdot)-I\|_{L^2}\leq c(\|r_1 \|_{L^2}+\|r_2 \|_{L^2} ).\label{3.42}
\end{equation}
\end{lemma}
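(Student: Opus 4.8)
The plan is to reduce the whole estimate to the resolvent bound of Lemma \ref{3.3} together with the Plemelj estimate (\ref{56}) at $p=2$ (for which $C_2=1$), and then to control $\|S(x;\cdot)\|_{L^2}$ and $\|S(x;\cdot)\|_{L^\infty}$ entrywise. The structural point that makes the constant $c$ uniform in $x$ is that each entry of $S(x;k)$ carries the oscillation $e^{\pm2ikx}$ only as a unimodular factor, so neither $\|S(x;\cdot)\|_{L^2_k}$ nor $\|S(x;\cdot)\|_{L^\infty_k}$ depends on $x$. I would treat $M_-$ and $M_+$ separately, since the two Plemelj projections interact differently with the fixed-point equation.

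First I would handle $M_-$. Setting $\Psi_-(x;k)=M_-(x;k)-I$, the integral equation (\ref{psi-}) reads $\Psi_-=\P^-(\Psi_-S+S)=\P_S^-\Psi_-+\P^-(S)$, that is $(I-\P_S^-)\Psi_-=\P^-(S)$. Inverting with Lemma \ref{3.3} and using (\ref{92}) together with (\ref{56}) gives
\[
\|M_-(x;\cdot)-I\|_{L^2}=\|(I-\P_S^-)^{-1}\P^-(S)\|_{L^2}\leq c\,\|\P^-(S)\|_{L^2}\leq c\,\|S(x;\cdot)\|_{L^2}.
\]
It then remains to estimate $\|S(x;\cdot)\|_{L^2}$. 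Since $\|S(x;\cdot)\|_{L^2}^2=\int_{\R}\big(|r_1r_2|^2+|r_2|^2+|r_1|^2\big)\,\d k$ and $|r_2(k)|<1$ forces $|r_1r_2|\leq|r_1|$, one obtains $\|S(x;\cdot)\|_{L^2}\leq c\,(\|r_1\|_{L^2}+\|r_2\|_{L^2})$ with a constant depending only on $\|r_{1,2}\|_{L^\infty}$, which establishes (\ref{3.42}) for $M_-$.

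Next I would deal with $M_+$. From the plus version of (\ref{pleme}), $\Psi_+=\P^+(M_-S)=\P^+(\Psi_-S+S)$, so by (\ref{56}) again
\[
\|M_+(x;\cdot)-I\|_{L^2}\leq\|\Psi_-S\|_{L^2}+\|S(x;\cdot)\|_{L^2}\leq\|S(x;\cdot)\|_{L^\infty}\,\|\Psi_-\|_{L^2}+\|S(x;\cdot)\|_{L^2}.
\]
The bound $|r_{1,2}|<1$ controls $\|S(x;\cdot)\|_{L^\infty}$ by a constant depending only on $\|r_{1,2}\|_{L^\infty}$, while $\|\Psi_-\|_{L^2}$ and $\|S(x;\cdot)\|_{L^2}$ have already been bounded by $c\,(\|r_1\|_{L^2}+\|r_2\|_{L^2})$ in the previous step; combining these yields (\ref{3.42}) for $M_+$.

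The main obstacle I anticipate is not any individual inequality but the bookkeeping required to certify that \emph{every} constant is genuinely independent of $x$. One must verify that the resolvent constant in Lemma \ref{3.3} (inherited from the $x$-independent bounds $c_\pm$ of Lemma \ref{5.1}) and both norms $\|S(x;\cdot)\|_{L^2}$, $\|S(x;\cdot)\|_{L^\infty}$ depend only on $\|r_{1,2}\|_{L^\infty}$, which ultimately rests on the identity $|e^{\pm2ikx}|=1$ for the oscillatory entries of $S$.
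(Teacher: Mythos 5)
Your proof is correct, and for the column $M_-$ and the estimate of $\|S(x;\cdot)\|_{L^2_k}$ it coincides with the paper's argument: both invert $(I-\P_S^-)\Psi_-=\P^-(S)$ via Lemma \ref{3.3}, apply the Plemelj bound (\ref{56}), and bound $S$ entrywise using $|e^{\pm 2ikx}|=1$ to get uniformity in $x$. Where you genuinely diverge is the column $M_+$, and there your route is the better one. The paper disposes of both signs in one stroke by citing ``the equation $(I-\P_S^-)\Psi_{\pm}=\P^-S$''; read literally this cannot hold, since by the uniqueness statement of Lemma \ref{5.2} it would force $\Psi_+=\Psi_-$, and even the intended reading $(I-\P_S^-)\Psi_+=\P^+S$ is not an identity satisfied by $\Psi_+=\P^+(\Psi_-S+S)$: it would require $\P^+(\Psi_-S)=\P^-(\Psi_+S)$, which fails in general because multiplication by $S$ does not preserve the Hardy spaces. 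Your argument---apply (\ref{56}) with $C_2=1$ directly to $\Psi_+=\P^+(\Psi_-S+S)$, estimate $\|\Psi_-S\|_{L^2}\le\|S(x;\cdot)\|_{L^\infty}\|\Psi_-\|_{L^2}$ with $\|S(x;\cdot)\|_{L^\infty}$ controlled by $\|r_{1,2}\|_{L^{\infty}}$, and feed in the bound on $\Psi_-$ already obtained---is rigorous, uniform in $x$ for exactly the reason you flag, and is equivalent to using the jump relation $\Psi_+=\Psi_-(I+S)+S$. In short, the paper's version buys symmetry and brevity at the price of a step that is wrong as written, while your asymmetric treatment of the two columns costs one extra line and closes that gap; the final constants depend only on $\|r_{1,2}\|_{L^{\infty}}$ in both versions.
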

\begin{proof}
Due to $r_{1,2}(k)\in H_1(\R)\cap L^{2,1}(\R)$, we find  $r_{1,2}(k)\in L^2(\R)\cap L^{\infty}(\R)$ and $S(x;k)\in L^2_k(\R)$. Moreover, there exists a constant $c_1$ only depending on $\|r_{1,2}\|_{L^{\infty}}$ such that
\begin{equation}
    \|S(x;k) \|_{L_k^2}\leq c_1(\| r_1\|_{L^2}+\| r_2\|_{L^2}),
\end{equation}
By Lemma \ref{3.3} and the bound (\ref{56}), we obtain
\begin{equation}
    \|M_{\pm}-I \|=\| \psi_{\pm}\|_{L^2}\leq c_2\|\P^-S \|_{L^2}\leq c_2\| S\|_{L^2}\\
    \leq c(\| r_1\|_{L^2}+\| r_2\|_{L^2}),
\end{equation}
where we have used the equation $(I-\P_S^-)\Psi_{\pm}=\P^-S$, and $c_2$ and $c$ are  constants only depending on $\| r_{1,2}\|_{L^{\infty}}$.
\end{proof}
\begin{proposition}(see \cite{18})  For every $x_0\in\R^+$ and every $r_{1,2}\in H^1(\R)$, we have
\begin{align}
    &\sup_{x\in (x_0,+\infty)} \| \langle x \rangle\P^+(r_2(k)e^{-2ikx})\|_{L^2_k} \leq \| r_2\|_{H^1},\label{83}\\
    &\sup_{x\in (x_0,+\infty)} \| \langle x \rangle \P^-(r_1(k)e^{2ikx})\|_{L^2_k} \leq \| r_1\|_{H^1},\label{109}
\end{align}
where $\langle x \rangle=(1+x^2)^{1/2}$, moreover, we have
\begin{align}
    &\sup_{x\in \R} \|  \P^+(r_2(k)e^{-2ikx})\|_{L^{\infty}_k} \leq \frac{1}{\sqrt{2}}\| r_2\|_{H^1},\label{85}\\
    &\sup_{x\in \R} \|  \P^-(r_1(k)e^{2ikx})\|_{L^{\infty}_k} \leq \frac{1}{\sqrt{2}}\| r_1\|_{H^1},\label{886}
\end{align}
furthermore, if $r_{1,2}\in L^{2,1}(\R)$, then we have
\begin{align}
    &\sup_{x\in\R} \|\P^+(kr_2(k)e^{-2ikx}) \|_{L^2_k}\leq \| kr_2(k)\|_{L^2_k},\label{87}\\
    &\sup_{x\in\R} \|\P^-(k{r}_1(k)e^{2ikx}) \|_{L^2_k}\leq \| kr_1(k)\|_{L^2_k}\label{872}.
\end{align}
\end{proposition}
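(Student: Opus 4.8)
The plan is to transfer all six inequalities to the Fourier side, where $\P^{+}$ and $\P^{-}$ act as restrictions of the Fourier transform to the two half-lines. Fixing the convention $\hat{f}(t)=\int_{\R}f(s)e^{ist}\,\d s$ and using $\frac{1}{s-k}=i\int_{-\infty}^{0}e^{i(s-k)t}\,\d t$ for $\mathrm{Im}\,k>0$ together with the analogous identity on $(0,+\infty)$ for $\mathrm{Im}\,k<0$, I would first record the representations
\[
\P^{+}(f)(k)=\frac{1}{2\pi}\int_{-\infty}^{0}e^{-ikt}\hat{f}(t)\,\d t,\qquad \P^{-}(f)(k)=-\frac{1}{2\pi}\int_{0}^{+\infty}e^{-ikt}\hat{f}(t)\,\d t .
\]
Two elementary facts then do all the work: the shift rule $\widehat{r(\cdot)\,e^{\mp 2i(\cdot)x}}(t)=\hat{r}(t\mp 2x)$, which turns the oscillatory factor into a translation of $\hat r$, and the Fourier description of the norm, $\|r\|_{H^{1}}^{2}=\frac{1}{2\pi}\int_{\R}(1+t^{2})|\hat{r}(t)|^{2}\,\d t$, obtained from $\widehat{r'}(t)=-it\,\hat{r}(t)$.

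For the weighted $L^{2}$ bounds (\ref{83}) and (\ref{109}), I would insert $f=r_{2}(k)e^{-2ikx}$ into the formula for $\P^{+}$ and apply Plancherel, which after the substitution $u=t-2x$ gives $\|\P^{+}(r_{2}(k)e^{-2ikx})\|_{L^{2}_{k}}^{2}=\frac{1}{2\pi}\int_{-\infty}^{-2x}|\hat{r}_{2}(u)|^{2}\,\d u$; the companion computation for $\P^{-}(r_{1}(k)e^{2ikx})$ produces integration over $(2x,+\infty)$. For $x>x_{0}>0$ the integration domain lies in $\{|u|\ge 2x\ge x\}$, so the pointwise comparison $\langle x\rangle^{2}=1+x^{2}\le 1+u^{2}=\langle u\rangle^{2}$ holds there; multiplying by $\langle x\rangle^{2}$ and enlarging the domain to $\R$ yields $\langle x\rangle^{2}\|\P^{+}(r_{2}(k)e^{-2ikx})\|_{L^{2}_{k}}^{2}\le\frac{1}{2\pi}\int_{\R}(1+u^{2})|\hat{r}_{2}(u)|^{2}\,\d u=\|r_{2}\|_{H^{1}}^{2}$, which is exactly the claim, and likewise for $r_{1}$.

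For the $L^{\infty}_{k}$ bounds (\ref{85}) and (\ref{886}) I would bound the modulus of $\P^{+}(r_{2}(k)e^{-2ikx})$ pointwise in $k$ by $\frac{1}{2\pi}\int_{-\infty}^{-2x}|\hat{r}_{2}(u)|\,\d u$, then write $|\hat{r}_{2}|=\langle u\rangle^{-1}\cdot\langle u\rangle|\hat{r}_{2}|$ and apply Cauchy--Schwarz; since $\int_{\R}\langle u\rangle^{-2}\,\d u=\pi$ and $\int_{\R}\langle u\rangle^{2}|\hat{r}_{2}|^{2}\,\d u=2\pi\|r_{2}\|_{H^{1}}^{2}$, the constant collapses to $\frac{1}{2\pi}\sqrt{\pi}\sqrt{2\pi}=\frac{1}{\sqrt{2}}$, uniformly in $x$. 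Finally, the bounds (\ref{87}) and (\ref{872}) are immediate from the $L^{2}$-boundedness of the Plemelj projection with operator norm $C_{p=2}=1$ recorded in (\ref{56}): because $|e^{\mp 2ikx}|=1$ for $k\in\R$, one has $\|\P^{+}(kr_{2}(k)e^{-2ikx})\|_{L^{2}_{k}}\le\|kr_{2}(k)e^{-2ikx}\|_{L^{2}_{k}}=\|kr_{2}(k)\|_{L^{2}_{k}}$, which needs only $r_{2}\in L^{2,1}(\R)$.

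The work here is essentially bookkeeping rather than hard analysis: the one step that demands care is establishing the half-line Fourier representations of $\P^{\pm}$ with the correct normalisation, after which tracking the constants so that the sharp values $1$ and $\tfrac{1}{\sqrt{2}}$ appear is routine. The only genuinely structural ingredient is the pointwise weight comparison $\langle x\rangle\le\langle u\rangle$ on the truncated Fourier domain; this is what forces the restriction $x>x_{0}>0$ in (\ref{83})--(\ref{109}), and its absence for the $L^{\infty}$ statement, where the weight is surrendered in the Cauchy--Schwarz step, is exactly why (\ref{85})--(\ref{886}) may be taken over all $x\in\R$.
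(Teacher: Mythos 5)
Your proof is correct, and since the paper itself states this proposition without proof (citing Pelinovsky--Shimabukuro \cite{18}), the relevant comparison is with that reference, whose argument is essentially the one you give: represent $\P^{\pm}$ as Fourier restrictions to half-lines, use the shift rule and Plancherel, compare the weights $\langle x\rangle\le\langle u\rangle$ on the shifted domain for the weighted bounds, apply Cauchy--Schwarz for the $L^{\infty}$ bounds, and invoke the unit-norm $L^2$-boundedness of the Riesz projection for the last pair. Your constant bookkeeping ($\frac{1}{2\pi}\sqrt{\pi}\sqrt{2\pi}=\frac{1}{\sqrt{2}}$) and the explanation of why positivity of $x_0$ is needed only for the weighted estimates are both accurate.
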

In order to obtain the estimates on the vector columns $\mu_--e_1$ and $\nu_+-e_2$ that will be needed in the subsequent section, we rewrite
the functions $\mu_-(x;k)-e_1$ and $\nu_+(x;k)-e_2$ by (\ref{pleme}),
\begin{equation}
\mu_-(x;k)-e_1=\P^-(M_-S(x;\cdot))^{(1)}(k)=\P^-(r_1(k)e^{2ikx}\nu_+(x;k))(k), k\in\R
\label{89}\end{equation}
and
\begin{equation}
\nu_+(x;k)-e_2=\P^+(M_-S(x;\cdot))^{(2)}(k)=\sigma \P^+(r_2(k)e^{-2ikx}\mu_-(x;k))(k), k\in\R,
\label{91}
\end{equation}
where we have used the fact
\begin{equation}
    \begin{split}
        M_-S=&[\mu_-,\nu_-]\left( \begin{array}{cc}
        \sigma r_1r_2 & \sigma r_2e^{-2ikx}\\
        r_1e^{2ikx} & 0
        \end{array}
        \right)\\
        =&[\sigma r_1r_2\mu_-+r_1e^{2ikx}\nu_-,\sigma r_2 e^{-2ikx}\mu_-]
    \end{split}
\end{equation}
and the identity
\begin{equation}
    \begin{split}
        &\nu_+=\sigma r_2 e^{-2ikx}\mu_-+\nu_-
    \end{split}\label{11117}
\end{equation}
 follows from (\ref{jump})
 \begin{equation}
 [\mu_+,\nu_+]=[\mu_-,\nu_-]\left( \begin{array}{cc}
 1+\sigma r_1r_2 & \sigma r_2e^{-2ikx}\\
 r_1e^{2ikx} & 1
 \end{array}\right).
 \end{equation}

Introduce a function
\begin{equation}
    M(x;k)=[\mu_-(x;k)-e_1, \nu_+(x;k)-e_2]
\end{equation}
then we have
\begin{equation}
    \begin{split}
        M-\P^+(MS_+)-\P^-(MS_-)=F,
    \end{split}
\label{1120}\end{equation}
where
\begin{equation}
    \begin{split}
        &F(x;k)=[\P^-(r_1e^{2ikx})e_2,\P^+(\sigma r_2e^{-2ikx})e_1],\\
        &S_+(x;k)=\left(\begin{array}{cc}
        0 & \sigma r_2e^{-2ikx}\\
        0 & 0
        \end{array}\right),\quad S_-(x;k)=\left(\begin{array}{cc}
        0 & 0\\
        r_1e^{2ikx} & 0
        \end{array}\right).
    \end{split}
\nonumber\end{equation}
\begin{lemma} For every $x_0\in\R^+$ and every $r_{1,2}\in H^1(\R)$, the solution of (\ref{89}) and (\ref{91}) satisfies
\begin{align}
    &\sup_{x\in (x_0,+\infty)} \| \langle x \rangle \mu_-^{(2)}(x;k)\|_{L^2_k}\leq c\| r_1\|_{H^1},\label{xmu}\\
    &\sup_{x\in (x_0,+\infty)} \| \langle x \rangle \nu_+^{(1)}(x;k)\|_{L^2_k}\leq c\| r_2\|_{H^1},\label{96}
\end{align}
where $c$ is a constant that only depends on $\|r_{1,2} \|_{L^{\infty}}$.
If $r_{1,2}\in \H(\R)$, then we have
\begin{align}
&\sup_{x\in (x_0,+\infty)} \| \partial_x \mu_-^{(2)}(x;k)\|_{L^2_k}\leq c(\|r_1 \|_{\H}+\|r_2 \|_{\H})\label{982}\\
&\sup_{x\in (x_0,+\infty)} \| \partial_x \nu_+^{(1)}(x;k)\|_{L^2_k}\leq c(\|r_1 \|_{\H}+\|r_2 \|_{\H}),\label{98}
\end{align}
where c is another  constant that depends on $\|r_{1,2} \|_{L^{\infty}}$ and $\|kr_{1,2}(k) \|_{L^{\infty}}$.
\end{lemma}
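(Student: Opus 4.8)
The plan is to exploit the fact that the four component relations coming from (\ref{89}) and (\ref{91}) decouple into two closed scalar systems, each governed by a composed Plemelj operator that is a strict contraction on $L^2_k(\R)$. Reading off components of (\ref{89})--(\ref{91}) together with the identity (\ref{11117}), I obtain for the pair $(\mu_-^{(2)},\nu_+^{(2)})$ the relations
\[
\mu_-^{(2)}=\P^-\!\big(r_1 e^{2ikx}\nu_+^{(2)}\big),\qquad
\nu_+^{(2)}-1=\sigma\P^+\!\big(r_2 e^{-2ikx}\mu_-^{(2)}\big),
\]
while the pair $(\nu_+^{(1)},\mu_-^{(1)})$ satisfies the analogous equations with the roles of $r_1,r_2$ and $\P^-,\P^+$ interchanged. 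Substituting the second relation into the first yields a single scalar equation
\[
(I-T_x)\mu_-^{(2)}=\P^-\!\big(r_1 e^{2ikx}\big),\qquad
T_x\phi:=\sigma\,\P^-\!\Big(r_1 e^{2ikx}\,\P^+\!\big(r_2 e^{-2ikx}\phi\big)\Big).
\]
Since $\|\P^{\pm}\|_{L^2\to L^2}=1$ by the Proposition (with $C_{p=2}=1$) and $|e^{\pm2ikx}|=1$, one has $\|T_x\|\leq\|r_1\|_{L^\infty}\|r_2\|_{L^\infty}$ uniformly in $x$; because $r_{1,2}\in H^1(\R)\hookrightarrow C_0(\R)$ with $|r_{1,2}|<1$, the sup-norms are strictly below $1$, so $T_x$ is a contraction and $(I-T_x)^{-1}$ is bounded on $L^2_k(\R)$ with norm independent of $x$.

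For the weighted bounds (\ref{xmu})--(\ref{96}), I would multiply the scalar equation by the scalar weight $\langle x\rangle$. As $\langle x\rangle$ is independent of $k$, it commutes with the linear operator $(I-T_x)^{-1}$, so
\[
\langle x\rangle\mu_-^{(2)}=(I-T_x)^{-1}\big[\langle x\rangle\,\P^-(r_1 e^{2ikx})\big].
\]
The source $\langle x\rangle\,\P^-(r_1 e^{2ikx})$ is controlled in $L^2_k$, uniformly for $x\in(x_0,+\infty)$, by estimate (\ref{109}), and applying the uniform bound on $(I-T_x)^{-1}$ yields (\ref{xmu}); the estimate (\ref{96}) for $\langle x\rangle\nu_+^{(1)}$ follows identically from the companion system using (\ref{83}).

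For the derivative bounds (\ref{982})--(\ref{98}), I would differentiate the scalar equation in $x$ (justified since the $x$-dependence enters only through the smooth factors $e^{\pm2ikx}$, so $\partial_x$ passes inside $\P^{\pm}$), giving
\[
(I-T_x)\,\partial_x\mu_-^{(2)}=\partial_x\P^-(r_1 e^{2ikx})+(\partial_x T_x)\mu_-^{(2)},
\]
with the same contraction $T_x$. The first source equals $2i\P^-(k r_1 e^{2ikx})$, bounded in $L^2_k$ by $2\|k r_1\|_{L^2_k}$ via (\ref{872}), and $kr_1\in L^2_k$ since $r_1\in L^{2,1}(\R)$. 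The term $(\partial_x T_x)\mu_-^{(2)}$ produces factors $k r_1 e^{2ikx}$ and $kr_2 e^{-2ikx}$, which I estimate by pulling out $\|kr_1\|_{L^\infty}$ and $\|kr_2\|_{L^\infty}$ (finite because $r_{1,2}\in\H(\R)$) and using the already established bound $\|\mu_-^{(2)}\|_{L^2_k}\leq\|\langle x\rangle\mu_-^{(2)}\|_{L^2_k}\leq c\|r_1\|_{H^1}$. Inverting $(I-T_x)$ once more produces (\ref{982}), and (\ref{98}) follows symmetrically, with constants depending only on $\|r_{1,2}\|_{L^\infty}$ and, in the derivative case, additionally on $\|k r_{1,2}\|_{L^\infty}$, as claimed.

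The norm bookkeeping is routine; the main obstacle is organizational, namely recognizing that the four components close into two independent scalar fixed-point equations with the \emph{same} operator $T_x$, and verifying that this operator is a contraction uniformly in the parameter $x$, so that both the weight $\langle x\rangle$ and the derivative $\partial_x$ can be transferred onto source terms controlled by the oscillatory Plemelj estimates of the Proposition. The only genuinely delicate point is the extra spectral factor $k$ generated by differentiating $e^{\pm2ikx}$, which is precisely why the stronger space $\H(\R)$, encoding $k r_{1,2}\in L^\infty$, is required for (\ref{982})--(\ref{98}) but not for the weighted estimates (\ref{xmu})--(\ref{96}).
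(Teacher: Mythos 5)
Your argument is correct and yields exactly the claimed bounds, but it inverts the linear system by a genuinely different mechanism than the paper. The paper rewrites (\ref{1120}) as the matrix equation (\ref{1125}), $G-\P^-(GS)=F$ with $G=M(I-S_+)$, whose rows close separately (the same decoupling you observed), and then quotes the resolvent bound (\ref{92}) of Lemma \ref{3.3}, $\|(I-\P_S^-)^{-1}f\|_{L^2_k}\le c\|f\|_{L^2_k}$, which was built on the Hermitian-positivity and contour-integration (vanishing lemma) arguments of Lemmas \ref{5.1} and \ref{5.2}. You instead substitute one component relation into the other and invert the composed operator $T_x=\sigma\P^-\left(r_1e^{2ikx}\P^+(r_2e^{-2ikx}\,\cdot\,)\right)$ by Neumann series, using $C_{p=2}=1$ and $\|T_x\|\le\|r_1\|_{L^{\infty}}\|r_2\|_{L^{\infty}}<1$; after that, both proofs transfer $\langle x\rangle$ and $\partial_x$ onto sources controlled by (\ref{83}), (\ref{109}), (\ref{87}), (\ref{872}) in the same way. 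Your route is more elementary and bypasses the Fredholm/vanishing-lemma machinery entirely, but be explicit that the contraction property is not a consequence of $r_{1,2}\in H^1(\R)$ alone: you need the standing assumption $|r_{1,2}(k)|<1$ of Section \ref{sec:section3} (under which the solution of (\ref{89}), (\ref{91}) exists at all), combined with the fact that $H^1(\R)$ functions are continuous and vanish at infinity so the supremum is attained and strictly below one---the same continuity-plus-decay argument the paper uses to get $\mu_-\ge c_->0$ in Lemma \ref{5.1}. The paper's detour through Lemma \ref{3.3} buys robustness (it needs only positive definiteness of $I+S_H$, not a contraction, and reuses a bound already needed in Lemma \ref{3.4}), while yours buys self-containedness. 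Two minor points: the system for $(\mu_-^{(1)},\nu_+^{(1)})$ involves the swapped operator $\sigma\P^+\left(r_2e^{-2ikx}\P^-(r_1e^{2ikx}\,\cdot\,)\right)$, not literally the same $T_x$ (with identical norm bound); and differentiating the fixed-point equation in $x$ deserves the routine remark that $x\mapsto T_x$ and the source are differentiable in operator and $L^2_k$ norm, the same liberty the paper takes when differentiating (\ref{1120}).
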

\begin{proof}
Recall  $\P^+-\P^-=I$ and
$$S_++S_-=(I-S_+)S,$$
Eq.(\ref{1120}) can be rewritten as
\begin{equation}
    G-\P^-(GS)=F
\label{1125}\end{equation}
with $G=M(I-S_+)$. The matrix G(x;k) is written component-wise as
\begin{equation}
    G(x;k)=\left( \begin{array}{cc}
     \mu_-^{(1)}(x;k)-1    &  \nu_+^{(1)}-\sigma r_2e^{-2ikx}(\mu_-^{(1)}(x;k)-1)\\
    \mu_-^{(2)}(x;k)     &  \nu_+^{(2)}-1-\sigma r_2e^{-2ikx}\mu_-^{(2)}(x;k)
    \end{array}\right).
\end{equation}
Comparing The second row of F(x;k) with G(x;k) and considering the bound (\ref{92}) and (\ref{109}), we have
\begin{equation}
\begin{split}
    &\sup_{x\in (-\infty,x_0)}\|\langle x \rangle \mu_-^{(2)} \|_{L^2_k}\leq c\sup_{x\in (-\infty,x_0)}\|\langle x\rangle \P^-(r_1e^{2ikx}) \|_{L^2_K}\leq c\| r_1\|_{H^1},\\
   &\|\nu_+^{(2)}-1-\sigma r_2e^{2ikx}\mu_-^{(2)}(x;k) \|_{L^2_k}\leq c\|\P^-(r_1e^{2ikx}) \|_{L^2_k},
\end{split}\label{1277}
\end{equation}
this completes the proof of (\ref{xmu}).

Similarly, Comparing the first row of F(x;k) and G(x;k) yields
\begin{equation}
    \begin{split}
        &\|\mu_-^{(1)}(x;k)-1 \|_{L^2_k}\leq c\|\P^+(r_2e^{-2ikx}) \|_{L^2_k}, \\
        &\|\nu_+^{(1)}(x;k)-\sigma r_2e^{-2ikx}(\mu_-^{(1)}(x;k)-1) \|_{L^2_k}\leq c\|\P^+(r_2e^{-2ikx}) \|_{L^2_k}.
    \end{split}\label{1288}
\end{equation}
Because of $r_2\in L^{\infty}(\R)$ and the triangle inequality, we have
\begin{equation}
    \sup_{x\in (-\infty,x_0)}\|\langle x \rangle\nu_+^{(1)}(x;k) \|_{L^2_k}\leq c^{'}\sup_{x\in (-\infty,x_0)}\|\langle x \rangle\P^+(r_2e^{-2ikx}) \|\leq c\|r_2 \|_{H^1},
\end{equation}
this completes the proof of (\ref{96}).

Taking derivative in x of (\ref{1120}), we obtain
\begin{equation}
    \partial_x M-\P^+(\partial_x M)S_+-\P^-(\partial_x M)S_-=\tilde F
\end{equation}
with
\begin{equation}
    \begin{split}
        \tilde F=&\partial_x F+\P^+M\partial_xS_++\P^-M\partial_xS_- \\
        =&2i[e_2\P^-(-kr_1(k)e^{2ikx}),e_1\P^+(\sigma kr_2(k)e^{-2ikx})] \\
        &+2i\left(\begin{array}{cc}
        \P^-(-kr_1(k)e^{2ikx}\nu_+^{(1)}(x;k)) &\sigma\P^+(kr_2(k)e^{-2ikx}(\mu_-^{(1)}(x;k)-1)) \\
        \P^-(-kr_1(k)e^{2ikx}(\nu_+^{(2)}(x;k)-1)) & \sigma\P^+(kr_2(k)e^{-2ikx}\mu_-^{(2)}(x;k))
        \end{array}\right).
    \end{split}    \nonumber
\end{equation}
According to the estimates (\ref{1277})- (\ref{1288}), we obtain  $\mu_-(x;k)-e_1\in L_x^{\infty}((-\infty,x_0);L^2_k(\R))$ and $\nu_+(x;k)-e_2\in L_x^{\infty}((-\infty,x_0);L^2_k(\R))$ . On account of
$kr_{1,2}(k)\in L^{\infty}(\R)$ , we conclude that   $\tilde F$ belongs to $L_x^{\infty}((-\infty,x_0);L^2_k(\R))$.

Then we repeat the analysis about (\ref{1125}) and  give the estimates (\ref{982}) and (\ref{98}).
\end{proof}

\section{Reconstruction and estimates of the potential}
\label{sec:section4}

Comparing the 2-element of the limits (\ref{9}) leads to
\begin{equation}
    \bar{q}(-x)=2i\sigma \lim_{|k|\rightarrow\infty} k\varphi_{\pm}^{(2)}(x;k).
\label{99}\end{equation}
As well, Comparing the 2-element of the limits (\ref{10}) leads to
\begin{equation}
    {q}(x)=-2i\lim_{|k|\rightarrow\infty} k\phi_{\pm}^{(1)}(x;k).
\label{100}\end{equation}
It follows from (\ref{99}) and (\ref{79}) that
\begin{equation}
    \bar{q}(-x)=2i\sigma\lim_{|k|\rightarrow\infty}k\mathcal C((M_-S)_{21} )\label{136}
\end{equation}
If $r_{1,2}\in H^1(\R)\cap L^{2,1}(\R)$, then $S(x;\cdot)\in L^1(\R)\cap L^2(\R)$. And the estimate (\ref{3.42}) implies that $M_-(x;\cdot)-I\in L^2(\R)$, hence, we arrive at $M_-S=(M_--I)S+S\in L^1(\R)$. Subsequently, Applying (\ref{57}) to (\ref{136}), we obtain
\begin{equation}
    \begin{split}
        \bar{q}(- x)=&-\frac{\sigma}{\pi}\int_{\R} r_1(k)e^{2ikx}( \nu_-^{(2)}(x;k)+\sigma  r_2(k) e^{-2ikx}\mu_-^{(2)}(x;k) )\d k\\
        =&-\frac{\sigma}{\pi}\int_{\R}r_1(k)e^{2ikx}\nu_+^{(2)}(x;k)\d k,
    \end{split}\label{134}
\end{equation}
where we have used the equation $\nu_+^{(2)}=\sigma r_2(k)e^{-2ikx}\mu_-^{(2)}+\nu_-^{(2)}$ due to (\ref{11117}).

Performing the same manipulation
for (\ref{100})
yields
\begin{equation}
    {q}(x)=\frac{\sigma}{\pi}\int_{\R} r_2(k)e^{-2ikx} \mu_-^{(1)}(x;k) \d k.
\label{103}\end{equation}

\begin{lemma} \label{3.6}
Let $r_{1,2}(k)\in \H(\R)$ satisfying $|r_{1,2}(k)|<1$, then
$q\in H^{1,1}(\R^+)$, moreover,
\begin{equation}
    \| q\|_{H^{1,1}(\R^+)}\leq c(  \| r_1\|_{\H(\R)}+\|r_2\|_{\H(\R)}),
\label{104}\end{equation}
where c is a constant that  depends on $\| r_{1,2}\|_{L^{\infty}}$ and $\|kr_{1,2}\|_{L^{\infty}}$.
\end{lemma}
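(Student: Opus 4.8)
The plan is to read $q|_{\R^+}$ off the reconstruction formula (\ref{103}) and to bound, one at a time, the four quantities $\|q\|_{L^2}$, $\|xq\|_{L^2}$, $\|\partial_x q\|_{L^2}$ and $\|x\partial_x q\|_{L^2}$ on $(0,+\infty)$ that together constitute $\|q\|_{H^{1,1}(\R^+)}$. On a bounded piece $(0,x_0]$ the weight $\langle x\rangle$ is comparable to $1$, so only boundedness of $q$ and $\partial_x q$ is needed there, which is immediate from $r_2\in L^2$ and the uniform $L^2_k$ control of $\mu_-$; all the substance is on $(x_0,+\infty)$, where the decaying estimates (\ref{xmu})--(\ref{98}) are available. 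There I would insert $\mu_-^{(1)}=1+\psi$, with $\psi:=\mu_-^{(1)}-1$, into (\ref{103}) and split $q=q_1+q_2$, where $q_1(x)=\frac{\sigma}{\pi}\int_\R r_2(k)e^{-2ikx}\,dk$ and $q_2(x)=\frac{\sigma}{\pi}\int_\R r_2(k)e^{-2ikx}\psi(x;k)\,dk$.

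For the three lower–order norms this split is convenient. Up to the rescaling $k\mapsto 2k$ the term $q_1$ is an inverse Fourier transform of $r_2$, so Plancherel yields $\|q_1\|_{L^2}\le c\|r_2\|_{L^2}$, $\|xq_1\|_{L^2}\le c\|r_2\|_{H^1}$ and $\|\partial_x q_1\|_{L^2}\le c\|kr_2\|_{L^2}=c\|r_2\|_{L^{2,1}}$, all dominated by $\|r_2\|_{\H}$. For $q_2$ I would use that the first component of (\ref{89}) reads $\psi=\mathcal P^-(r_1e^{2ikx}\nu_+^{(1)})$. A plain Cauchy--Schwarz in $k$ with the $L^2$–boundedness (\ref{56}) of $\mathcal P^-$ and the bound (\ref{96}) on $\nu_+^{(1)}$ gives $\|\psi\|_{L^2_k}\lesssim\langle x\rangle^{-1}$, hence $q_2\in L^2$. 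For the weighted norm $\|xq_2\|_{L^2}$ a single power $\langle x\rangle^{-1}$ is not enough, so I would move the projection off $\psi$ by the duality $\langle f,\mathcal P^-g\rangle=-\langle\mathcal P^+f,g\rangle$, rewriting $q_2$ as a pairing of $\mathcal P^+(r_2e^{-2ikx})$ against $r_1e^{2ikx}\nu_+^{(1)}$; \emph{both} factors now decay like $\langle x\rangle^{-1}$ in $L^2_k$, the first by (\ref{83}) and the second by (\ref{96}), so that $|q_2(x)|\lesssim\langle x\rangle^{-2}$ and $xq_2\in L^2$. The derivative $\partial_x q_2$ is handled analogously after differentiating in $x$ and invoking (\ref{98}) for $\partial_x\nu_+^{(1)}$ together with $kr_1\in L^\infty$.

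The genuinely delicate estimate, and the one I expect to be the \textbf{main obstacle}, is the weighted derivative $\|x\partial_x q\|_{L^2}$. Differentiating (\ref{103}) in $x$ produces $-\frac{2i\sigma}{\pi}\int_\R kr_2(k)e^{-2ikx}\mu_-^{(1)}(x;k)\,dk$, whose $\mu_-^{(1)}\equiv1$ part is $\partial_x q_1\sim\mathcal F[kr_2]$: attaching the extra weight $x$ and integrating by parts in $k$ would land a derivative on $kr_2$ and create $\partial_k(kr_2)=r_2+kr_2'$, and $kr_2'$ is \emph{not} controlled by $r_2\in H^1\cap L^{2,1}$ --- this is exactly the mismatch between $H^{1,1}$ in $x$ and $\H$ in $k$, and indeed the naive $r_2$–only estimate must fail, since consistency of the two reconstruction formulas (\ref{103}) and (\ref{134}) forces a relation between $r_1$ and $r_2$. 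The point of the extra requirement $kr_{1,2}\in L^\infty$ built into $\H(\R)$ is precisely to bypass that integration by parts: I would keep $kr_2$ intact and, rather than isolating the $\mu_-^{(1)}\equiv1$ part, use the $x$–ODE satisfied by the RH columns to keep every $k$–multiplier coupled to a spatially decaying column, pairing $kr_{1,2}\in L^\infty$ against the $\langle x\rangle$–weighted $L^2_k$ bounds (\ref{xmu}), (\ref{96}) and their $x$–derivatives (\ref{982}), (\ref{98}), and bounding the leftover $kr_2\in L^2$ contribution by Plancherel. Because (\ref{982})--(\ref{98}) already carry the constant $\|kr_{1,2}\|_{L^\infty}$, every constant produced depends only on $\|r_{1,2}\|_{L^\infty}$ and $\|kr_{1,2}\|_{L^\infty}$, which is the dependence asserted in (\ref{104}).
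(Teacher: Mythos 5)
Your first two paragraphs are, in method, exactly the paper's proof. The paper makes the same splitting $q=q_1+q_2$ as in (\ref{105}), treats $q_1$ by Plancherel and $\widehat{\partial_k r}=x\hat r$, and treats the remainder $I(x)=\int_{\R}r_2(k)e^{-2ikx}(\mu_-^{(1)}(x;k)-1)\,\d k$ by precisely your duality step: substituting $\mu_-^{(1)}-1=\P^-(r_1e^{2ikx}\nu_+^{(1)})$ from (\ref{89}) and using Fubini to move the Cauchy kernel, so that $I(x)$ becomes the pairing of $r_1e^{2ikx}\nu_+^{(1)}$ against $\P^+(r_2e^{-2ikx})$, after which the two $\langle x\rangle^{-1}$ decays from (\ref{96}) and (\ref{83}) give $|I(x)|\lesssim\langle x\rangle^{-2}$; the derivative $I'(x)$ is handled the same way with (\ref{85}), (\ref{87}), (\ref{98}) and $kr_{1,2}\in L^{\infty}$. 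Up to here your proposal and the paper coincide.

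The gap is your final paragraph, and it is a real one: the "coupling via the $x$-ODE" you invoke for $\|x\partial_x q\|_{L^2(\R^+)}$ is not an argument, and no argument can close it, because the bound you are after is false on the data class of the lemma. Take $r_1\equiv 0$ and $r_2\in\H(\R)$ arbitrary with $|r_2|<1$: then $M_-S$ has vanishing first column, so (\ref{89}) forces $\mu_-^{(1)}\equiv 1$, the remainder vanishes identically, and (\ref{103}) gives $q|_{\R^+}(x)=\frac{\sigma}{\pi}\int_{\R}r_2(k)e^{-2ikx}\d k$ exactly --- there is no spatially decaying column left to couple $kr_2$ to. Now choose $r_2$ to be the Fourier transform of a function $G$ whose derivative is a sum of paired bumps $\pm 1$ of width $n^{-2}$ near $x=n$: then $G,\,xG,\,G'\in L^2$ and $G'\in L^1$, so $r_2\in H^1\cap L^{2,1}$ and $kr_2\in L^{\infty}$, i.e. $r_2\in\H(\R)$ after scaling, yet $xG'=x\,\partial_x q\notin L^2(\R^+)$. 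So $\|x\partial_x q\|_{L^2(\R^+)}$ admits no bound of the form (\ref{104}). This is consistent with the paper only because the paper never estimates that quantity: its proof shows the derivative of the Fourier term lies merely in $L^2(\R)$, and only the remainder's derivative gets a weight, i.e. what the paper's proof actually establishes is $q,\,xq,\,\partial_x q\in L^2(\R^+)$ (the Deift--Zhou reading $H^{1,1}=H^1\cap L^{2,1}$), not the stronger space $\{q,\,\partial_x q\in L^{2,1}\}$ defined in Section 2.1. In short: where your proposal targets what the paper proves, it is correct and identical in approach; the piece you rightly flagged as the main obstacle cannot be overcome by any refinement, and the correct repair is to weaken the target norm (drop $\|x\partial_x q\|_{L^2}$) rather than to search for a cleverer estimate.
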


\begin{proof}
We rewrite (\ref{103} ) for ${q}(x)$ as
\begin{equation}
    {q}(x)=\frac{\sigma}{\pi}\int_{\R} r_2(k)e^{-2ikx} \d k+\frac{\sigma}{\pi} \int_{\R} r_2(k)e^{-2ikx} ( \mu_-{(1)}(x;k)-1 )\d k.
\label{105}\end{equation}
Recall the results from the Fourier theory. For a function $r(k)\in L^2(\R)$, by Parseval's equation, we have
\begin{equation}
    \| r\|_{L^2}=\|\hat{r} \|_{L^2},
\end{equation}
where the function $\hat{r}$ denotes the Fourier transform with the definition $$\hat{r}(x)=\frac{1}{2\pi}\int_{\R}r(k)e^{-ikx}dz.$$
Since $r_2\in H^1(\R)$, the first term of (\ref{105}) belongs to $L^{2,1}(\R)$  due to the property $\widehat{\partial_k r(k)}=x\hat{r}(x)$.

Let
\begin{equation}
    \uppercase\expandafter{\romannumeral1}(x)=\int_{\R} r_2(k)e^{-2ikx} ( \mu_-^{(1)}(x;k)-1 ) \d k.
\end{equation}
Substituting (\ref{91}) into the above expression and applying  Fubini's theorem yields
\begin{equation}
    \begin{split}
       \uppercase\expandafter{\romannumeral1}(x)=&\sigma\int_{\R} r_2(k)e^{-2ikx} \lim_{\epsilon\rightarrow 0} \frac{1}{2\pi i}\int_{\R}\frac{r_1(s)e^{2isx}\nu_+^{(1)}(s)}{s-(k-i\epsilon)}\d s\d k\\
       =&-\sigma\int_{\R}r_1(s) e^{2isx} \nu_+^{(1)}(s)\lim_{\epsilon\rightarrow 0}\frac{1}{2\pi i}\int_{\R}\frac{r_2(k)e^{-2ikx}}{k-(s+i\epsilon)} \d k\d s\\
       =& -\sigma\int_{ \R} r_1( k)e^{2ikx}\nu_+^{(1)}(k) \P^+(r_2(k)e^{-2ikx})(k)\d k,
    \end{split}
\end{equation}
thus for every $x_0\in \R^+$, utilizing the H\"{o}lder's inequality and  the estimates (\ref{109}) and (\ref{xmu}), we find
\begin{equation}
    \begin{split}
      &  \sup_{x\in (x_0,+\infty)}\left| \langle x\rangle^2\uppercase\expandafter{\romannumeral1}(x) \right|\leq  \| r_1\|_{L^{\infty}} \sup_{x\in (x_0,+\infty)} \|\langle x\rangle\nu_+^{(1)}(x;k) \|_{L_k^2}\\
        &\qquad\times\sup_{x\in (x_0,+\infty)} \|\langle x\rangle\P^+(r_2(k)e^{-2ikx}) \|_{L_k^2}
        \leq   c_1\|r_2 \|_{H^1}^2,
    \end{split}
\end{equation}
where $c_1$ is a constant only depends on $\|r_{1,2} \|_{L^{\infty}}$,
hence we obtain
\begin{equation}
    \begin{split}
        \|\langle x\rangle I(x) \|_{L^2(\R^+)}
        \leq & c_1 \sup_{x\in\R^-} | \langle x\rangle^2 I(x)  |
        \leq   c_1\|r_2 \|_{H^1}^2.
    \end{split}
\end{equation}
 Combining the results of the two terms of (\ref{105}) leads to
 \begin{equation}
     \|q(x)\|_{L^{2,1}( \R^+)}\leq c_1^{}(1+\| r_2\|_{H^1})\| r_2\|_{H^1}.
\label{111} \end{equation}
 This completes  the proof of $q\in L^{2,1}(\R^+)$.
 By Fourier theory, the derivative of the first term of (\ref{105}) belongs to $L^2(\R)$. For the second term $I(x)$, we differentiate $I(x)$ in $x$ and obtain
 \begin{equation}
     \begin{split}
         I'(x)=&\frac{\partial}{\partial x}\int_{\R} r_2(k)e^{-2ikx}(\mu_-^{(1)}(x;k)-1) \d k\\
         =&-2i\int_{\R}kr_2(k)e^{-2ikx}(\mu_-^{(1)}(x;k)-1 )\d k
         \\&+\int_{\R} r_2(k)e^{-2ikx}\frac{\partial \mu_-^{(1)}(x;k)}{\partial x} \d k\\
         =&2i\int_{\R} r_1(k) e^{2ikx}\nu_+^{(1)}(x;k)\P^+(kr_2(k)e^{-2ikx})(k)\d k\\
         &-2i\int_{\R} kr_1(k)e^{2ikx}\nu_+^{(1)}(x;k)\P^+(r_2(k)e^{-2ikx})(k)\d k\\
         &-\int_{\R} r_1(k) e^{2ikx} \partial_x \nu_+^{(1)}(x;k)\P^+(r_2(k)e^{-2ikx})(k)\d k,
     \end{split}
 \end{equation}
 where we have used the equation (\ref{91}) and Fubini's theorem.

 Utilizing the estimates (\ref{83}), (\ref{85}), (\ref{87}), (\ref{96}) and (\ref{98}), we find that for every $x_0\in\R^+ $,
 \begin{equation}
     \begin{split}
         \sup_{x\in (x_0,+\infty)}|\langle x\rangle I'(x)  | \leq &2\| r_1\|_{L^{\infty}} \sup\|\langle x\rangle\nu_+^{(1)}(x;k) \|_{L^2_k}\sup \| \P^+(kr_2(k)e^{-2ikx})\|_{L^2_k}\\
         &+2\| kr_1\|_{L^2}\sup\|\langle x\rangle\nu_+^{(1)}(x;k) \|_{L^2_k}\sup \| \P^+(r_2(k)e^{-2ikx})\|_{L^{\infty}_k}\\
         &+\| r_1\|_{L^{\infty}}\sup\|\partial_x\nu_+^{(1)}(x;k) \|_{L^2_k}\sup \|\langle x\rangle \P^+(r_2(k)e^{-2ikx})\|_{L^2_k}\\
         \leq & c_2\|r_1 \|_{\H}\|r_2 \|_{\H}(\|r_1 \|_{\H}+\|r_2 \|_{\H}),
     \end{split}
\nonumber \end{equation}
which implies that
\begin{equation}
\begin{split}
\| \langle x\rangle I'(x) \|_{L^2(\R^+)}
\leq & c_2\|r_1 \|_{\H}\|r_2 \|_{\H}(\|r_1 \|_{\H}+\|r_2 \|_{\H}),
\label{1113}\end{split}
\end{equation}
with $c_2$ is a constant that depends on $\| r_{1,2}\|_{L^{\infty}}$ and $\|kr_{1,2}\|_{L^{\infty}}$,
hence $I'(x)\in L^{2,1}(\R^+)$, then $q\in H^{1,1}(\R^+)$. The estimate (\ref{104}) can be obtained from
(\ref{111}) and (\ref{1113}) with a constant c depending on $\| r_{1,2}\|_{L^{\infty}}$ and $\|kr_{1,2}\|_{L^{\infty}}$.
\end{proof}
\begin{lemma} \label{3.7}Let $r_{1,2}(k)\in \H(\R)$ satisfying $|r_{1,2}(k)|<1$, then the mapping
\begin{equation}
\H(\R)\ni (r_1,r_2)\mapsto q\in H^{1,1}(\R^+)
\end{equation}
is Lipschitz continuous.
\end{lemma}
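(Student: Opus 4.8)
The plan is to reproduce the reconstruction argument of Lemma~\ref{3.6}, applied this time to the difference of two reconstructed potentials, so that every estimate there becomes linear in the data differences. Fix $(r_1,r_2),(\tilde r_1,\tilde r_2)\in\H(\R)$ with $|r_{1,2}|,|\tilde r_{1,2}|<1$, let $q,\tilde q$ be reconstructed by (\ref{103}), and let $M=[\mu_--e_1,\nu_+-e_2]$ and $\tilde M=[\tilde\mu_--e_1,\tilde\nu_+-e_2]$ solve (\ref{1120}) for $S,\tilde S$ respectively. First I would subtract the two copies of (\ref{103}) and split
\begin{equation}
q-\tilde q=\frac{\sigma}{\pi}\int_{\R}(r_2-\tilde r_2)e^{-2ikx}\mu_-^{(1)}\,\d k+\frac{\sigma}{\pi}\int_{\R}\tilde r_2\,e^{-2ikx}\bigl(\mu_-^{(1)}-\tilde\mu_-^{(1)}\bigr)\,\d k.
\nonumber
\end{equation}
The first integral is controlled directly by $\|r_2-\tilde r_2\|_{\H}$ through the same Fourier and Plemelj estimates used for the two terms of (\ref{105}) in Lemma~\ref{3.6}, with the weighted bounds (\ref{96}) and (\ref{109}) entering unchanged. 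The remaining work is therefore to bound the difference of the RH solutions, $\mu_-^{(1)}-\tilde\mu_-^{(1)}$ and $\nu_+^{(1)}-\tilde\nu_+^{(1)}$, in the weighted norms $\langle x\rangle L^2_k$ and $\partial_xL^2_k$.

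To estimate $M-\tilde M$, I would work from the integral equation (\ref{1125}), written as $(I-\P_S^-)G=F$ with $G=M(I-S_+)$, together with its tilded analogue $(I-\P_{\tilde S}^-)\tilde G=\tilde F$. Subtracting and using the second-resolvent manipulation exactly as in (\ref{39}) gives
\begin{equation}
G-\tilde G=(I-\P_S^-)^{-1}\Bigl[(F-\tilde F)+\P^-\bigl(\tilde G\,(S-\tilde S)\bigr)\Bigr],
\nonumber
\end{equation}
where I have used $\P_S^-\,\tilde G-\P_{\tilde S}^-\,\tilde G=\P^-(\tilde G(S-\tilde S))$. Now $S-\tilde S$ is linear in $r_1-\tilde r_1$ and $r_2-\tilde r_2$, while $F-\tilde F$ is built from $\P^\pm\bigl((r_{1,2}-\tilde r_{1,2})e^{\mp2ikx}\bigr)$. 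Applying the uniform bound (\ref{92}) of Lemma~\ref{3.3} for $(I-\P_S^-)^{-1}$, the $L^2$-boundedness (\ref{56}) of $\P^-$, the already-known bounds (\ref{xmu})--(\ref{96}) on $\tilde G$, and the Plemelj projection estimates (\ref{83}), (\ref{109}) applied to the data differences, I obtain
\begin{equation}
\sup_{x\in(x_0,+\infty)}\bigl\|\langle x\rangle\bigl(\mu_-^{(1)}-\tilde\mu_-^{(1)}\bigr)\bigr\|_{L^2_k}+\sup_{x\in(x_0,+\infty)}\bigl\|\langle x\rangle\bigl(\nu_+^{(1)}-\tilde\nu_+^{(1)}\bigr)\bigr\|_{L^2_k}\leq c\bigl(\|r_1-\tilde r_1\|_{\H}+\|r_2-\tilde r_2\|_{\H}\bigr),
\nonumber
\end{equation}
with $c$ depending only on the $L^\infty$-norms of the coefficients.

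Feeding this back into the split for $q-\tilde q$ yields the $L^{2,1}(\R^+)$ Lipschitz bound; to upgrade to $H^{1,1}(\R^+)$, I would differentiate in $x$ as in Lemma~\ref{3.6}, producing terms containing $kr_{1,2}$, $\partial_x\mu_-^{(1)}$ and $\partial_x\nu_+^{(1)}$, whose differences are estimated by the same resolvent identity together with (\ref{87}), (\ref{872}) and (\ref{982}), (\ref{98}). The main obstacle is precisely the difference estimate of the RH solutions: propagating the resolvent identity for $G-\tilde G$ uniformly in $x$ through the $\langle x\rangle$- and $\partial_x$-weighted $L^2_k$ norms, where one must control the difference-of-projections term $\P^-(\tilde G(S-\tilde S))$ without losing the weight. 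Since every contribution is linear in $r_{1,2}-\tilde r_{1,2}$ and the constant in (\ref{92}) is uniform under the constraint $|r_{1,2}|<1$, Lipschitz — and not merely continuous — dependence follows.
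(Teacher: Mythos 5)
Your proposal is correct and takes essentially the same approach as the paper: the paper's proof uses the identical decomposition of $q-\tilde q$ (your first integral is just its first two terms regrouped, via $\mu_-^{(1)}=1+(\mu_-^{(1)}-1)$) and then simply invokes ``repeating the analysis in the proof of Lemma~\ref{3.6}.'' Your resolvent identity for $G-\tilde G$ is precisely the detail that this invocation leaves implicit, and it mirrors the second-resolvent manipulation the paper itself performs in (\ref{39}) for the direct map, so you have in fact supplied more detail than the paper's own argument.
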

\begin{proof}
Let $(r_1,r_2),(\tilde r_1,\tilde r_2)\in \H(\R)$. Let the functions q and $\tilde q$ are the corresponding potentials respectively.  We will show that there exists a constant that  depends on $\| r_{1,2}\|_{L^{\infty}}$ and $\|kr_{1,2}\|_{L^{\infty}}$ such that
\begin{equation}
    \|q-\tilde q \|_{H^{1,1}(\R^+)}\leq c(\|r_1-\tilde r_1 \|_{\H(\R)}+ \|r_2-\tilde r_2 \|_{\H(\R)}). \label{wiw}
\end{equation}
From (\ref{103}) and (\ref{105}), we have
\begin{equation}
\begin{split}
    q-\tilde q=&\frac{\sigma}{\pi}\int_{\R}(r_2-\tilde r_2)e^{-2ikx}\d k+\int_{\R}\frac{\sigma}{\pi}(r_2-\tilde r_2)e^{-2ikx}(\mu_-^{(1)}(x;k)-1)\d k\\
    &+\int_{\R}\frac{\sigma}{\pi}\tilde r_2(k)e^{-2ikx}(\mu_-^{(1)}(x;k)-\tilde\mu_-^{(1)}(x;k))\d k.
\end{split}
\end{equation}
Repeating the analysis in the proof of Lemma \ref{3.6}, we obtain the Lipschitz continuity of q.
\end{proof}
\begin{lemma}\label{4.8}
Let $r_{1,2}(k)\in \H(\R)$ satisfying $|r_{1,2}(k)|<1$, then
$q\in H^{1,1}(\R^-)$, moreover,
\begin{equation}
    \| q\|_{H^{1,1}(\R^-)}\leq c(  \| r_1\|_{\H(\R)}+\|r_2\|_{\H(\R)}    ),
\end{equation}
where c is a constant that depends on $\| r_{1,2}\|_{L^{\infty}}$ and $\|kr_{1,2}\|_{L^{\infty}}$.
\end{lemma}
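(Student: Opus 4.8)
The plan is to mirror the proof of Lemma \ref{3.6}, but to start from the conjugate reconstruction formula (\ref{134}) rather than (\ref{103}), exploiting the reflection $x\mapsto -x$. The observation that makes this work is that, writing $g(x):=\bar q(-x)$, the change of variables $y=-x$ together with $\langle -x\rangle=\langle x\rangle$ yields the isometry $\|q\|_{H^{1,1}(\R^-)}=\|g\|_{H^{1,1}(\R^+)}$. Since (\ref{134}) furnishes $g(x)=\bar q(-x)$ for all $x$, it suffices to bound $\|g\|_{H^{1,1}(\R^+)}$, i.e. to estimate $g$ and $g'$ on $(x_0,+\infty)$ with $x_0\in\R^+$. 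Crucially, this is precisely the range on which the projection estimates (\ref{83})--(\ref{872}) and the column estimates (\ref{xmu})--(\ref{98}) were proved, so no new analytic input is needed: the whole argument of Lemma \ref{3.6} transcribes with the triple $(\mu_-^{(1)},\nu_+^{(1)},\P^+)$ replaced by $(\mu_-^{(2)},\nu_+^{(2)},\P^-)$.

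Concretely, for the $L^{2,1}$ bound I would split (\ref{134}) as
\begin{equation}
g(x)=-\frac{\sigma}{\pi}\int_{\R}r_1(k)e^{2ikx}\,\d k-\frac{\sigma}{\pi}\int_{\R}r_1(k)e^{2ikx}\big(\nu_+^{(2)}(x;k)-1\big)\,\d k=:g_1(x)+g_2(x).
\end{equation}
Since $r_1\in H^1(\R)$, Parseval's identity and the relation $\widehat{\partial_k r}(x)=x\hat r(x)$ place $g_1\in L^{2,1}(\R)$. For $g_2$ I would insert the second component of (\ref{91}), namely $\nu_+^{(2)}-1=\sigma\P^+(r_2e^{-2ikx}\mu_-^{(2)})$, and apply Fubini exactly as in Lemma \ref{3.6}; swapping the order of integration converts the inner $\P^+$ into a factor $\P^-(r_1e^{2ikx})$, so that $g_2$ becomes an integral of $r_2e^{-2ikx}\mu_-^{(2)}\,\P^-(r_1e^{2ikx})$. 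H\"older's inequality then bounds $\sup_{x\in(x_0,+\infty)}|\langle x\rangle^2 g_2(x)|$ by $\|r_2\|_{L^\infty}\,\sup\|\langle x\rangle\mu_-^{(2)}\|_{L^2_k}\,\sup\|\langle x\rangle\P^-(r_1e^{2ikx})\|_{L^2_k}$, which is finite by (\ref{xmu}) and (\ref{109}), giving $g_2\in L^{2,1}(\R^+)$.

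For the weighted $H^1$ part I would differentiate (\ref{134}) in $x$ and treat the three resulting types of terms exactly as in the derivative computation of Lemma \ref{3.6}: the term carrying $\partial_x\nu_+^{(2)}$ is controlled by (\ref{982}), while the two terms carrying the extra factor $k$ are controlled by pairing the bound $kr_{1,2}\in L^\infty(\R)$ with the projection estimates (\ref{872}) and (\ref{886}) and the weighted bounds (\ref{xmu}), (\ref{109}). Collecting the two parts yields $g\in H^{1,1}(\R^+)$ with the stated bound in $\|r_{1,2}\|_{\H}$, and transporting back through the isometry gives $q\in H^{1,1}(\R^-)$ together with the claimed estimate.

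The only genuinely new point — and the step I expect to require the most care — is the bookkeeping of the reflection and conjugation: one must verify that choosing (\ref{134}) rather than (\ref{103}) indeed routes \emph{every} estimate to the half-line $(x_0,+\infty)$ on which the preceding lemmas were established, and that the Fubini swap produces the $\P^-$ projection (and, after differentiation, $\P^-(k\,\cdot)$) rather than its $\P^+$ counterpart. Once these sign and side conventions are pinned down, the remaining manipulations are identical to those of Lemma \ref{3.6}.
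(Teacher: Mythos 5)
Your proposal is correct and follows essentially the same route as the paper: the paper likewise splits (\ref{134}) into the free Fourier term, handled via Parseval and $\widehat{\partial_k r_1}(x)=x\hat r_1(x)$, plus the correction term $-\frac{\sigma}{\pi}\int_{\R}r_1(k)e^{2ikx}\bigl(\nu_+^{(2)}(x;k)-1\bigr)\d k$, which it then bounds by ``repeating the analysis in the proof of Lemma \ref{3.6}'' --- precisely the mirrored Fubini/H\"older argument with $(\mu_-^{(2)},\nu_+^{(2)},\P^-)$ in place of $(\mu_-^{(1)},\nu_+^{(1)},\P^+)$ that you spell out, including the reduction of $\|q\|_{H^{1,1}(\R^-)}$ to estimates of $\bar q(-x)$ on $(x_0,+\infty)$. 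One cosmetic slip: after the Fubini swap the differentiated correction term carries $\partial_x\mu_-^{(2)}$ rather than $\partial_x\nu_+^{(2)}$, and that is exactly what the estimate (\ref{982}) you invoke controls, so the argument goes through as stated.
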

\begin{proof}
We rewrite (\ref{134}) for $\bar q(-x)$ as
\begin{equation}
    \bar q(-x)=-\frac{\sigma}{\pi}\int_{\R} r_1(k)e^{2ikx}  \d k-\frac{\sigma}{\pi}\int_{\R} r_1(k)e^{2ikx} (\nu_+^{(2)}(x;k)-1) \d k.
\end{equation}
Let
\begin{equation}
    \hat r_1(-x)=\int_{\R} r_1(k)e^{-2ik(-x)}\d k.
\end{equation}
According to the Fourier theory, we have
\begin{equation}
    -x\hat r_1(-x)=\widehat{\partial_kr_1(k)}(x)
\end{equation}
and
\begin{equation}
    \|x\hat r_1(-x)\|_{L^2(\R)}=\| \partial_k r_1(k)\|_{L^2(\R)}.
\end{equation}
Since $r_1\in H^1(\R)$,  we obtain
\begin{equation}
    \|\hat r_1(-x) \|_{L^{2,1}(\R)}\leq \| \hat r_1(-x)\|_{L^2(\R)}+\| x\hat r_1(-x)\|_{L^2(\R)}= \| r_1\|_{H^1(\R)}.
\end{equation}
Let
$$\uppercase\expandafter{\romannumeral2}(x)=-\frac{\sigma}{\pi}\int_{\R}r_1(k)e^{2ikx}(\nu_+^{(2)}(x;k)-1)\d k.$$
Repeating the analysis in the proof of Lemma \ref{3.6}, we obtain $q\in H^{1,1}(\R^-)$ and
\begin{equation}
    \| q\|_{H^{1,1}(\R^-)}\leq c(  \| r_1\|_{\H(\R)}+\|r_2\|_{\H(\R)}   ).
\end{equation}
\end{proof}
By similar procedure as Lemma \ref{3.7}, we have the following results:
\begin{lemma}\label{4.9}
 Let $r_{1,2}(k)\in \H(\R)$ satisfying $|r_{1,2}(k)|<1$, then the mapping
\begin{equation}
\H(\R)\ni (r_1,r_2)\mapsto q\in H^{1,1}(\R^-)
\end{equation}
is Lipschitz continuous.
\end{lemma}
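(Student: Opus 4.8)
The plan is to follow the proof of Lemma~\ref{3.7}, but to reconstruct $q$ on $\R^-$ through the formula (\ref{134}) for $\bar q(-x)$ rather than through (\ref{103})--(\ref{105}). Fix two pairs $(r_1,r_2),(\tilde r_1,\tilde r_2)\in\H(\R)$ with $|r_{1,2}(k)|<1$ and $|\tilde r_{1,2}(k)|<1$, let $q,\tilde q$ be the reconstructed potentials, and let $\mu_-,\nu_+$ and $\tilde\mu_-,\tilde\nu_+$ be the corresponding columns of the two RH solutions. Subtracting the two instances of (\ref{134}) and adding and subtracting $\tilde r_1 e^{2ikx}\nu_+^{(2)}$ under the integral, I would split
\begin{equation*}
\begin{split}
\bar q(-x)-\overline{\tilde q}(-x)
=&-\frac{\sigma}{\pi}\int_{\R}(r_1-\tilde r_1)e^{2ikx}\,\d k
-\frac{\sigma}{\pi}\int_{\R}(r_1-\tilde r_1)e^{2ikx}\big(\nu_+^{(2)}(x;k)-1\big)\,\d k\\
&-\frac{\sigma}{\pi}\int_{\R}\tilde r_1(k)e^{2ikx}\big(\nu_+^{(2)}(x;k)-\tilde\nu_+^{(2)}(x;k)\big)\,\d k
\end{split}
\end{equation*}
into a free Fourier term, a term carrying the difference $r_1-\tilde r_1$, and a term carrying the difference $\nu_+^{(2)}-\tilde\nu_+^{(2)}$ of the RH solutions.

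The free term is handled exactly as the linear terms in Lemmas~\ref{3.6} and~\ref{4.8}: by Parseval together with $\widehat{\partial_k r}(x)=-x\hat r(x)$ its $L^{2,1}(\R^-)$ norm is bounded by $\|r_1-\tilde r_1\|_{H^1(\R)}$, while the $x$-derivative brings down a factor $k$ and is controlled by $\|r_1-\tilde r_1\|_{\H(\R)}$. For the second term I would recover $\nu_+^{(2)}-1$ from the $(2,2)$-entry of the matrix $G$ of (\ref{1125}) exactly as the term $\mathrm{II}(x)$ was treated in Lemma~\ref{4.8}, and then apply H\"older's inequality together with the Plemelj estimates (\ref{109}), (\ref{886}), (\ref{872}) for $\P^-(r_1e^{2ikx})$ and the column bounds (\ref{xmu}), (\ref{982}) for $\mu_-^{(2)}$; since $r_2\in L^\infty(\R)$, this produces a bound by $\|r_1-\tilde r_1\|_{\H(\R)}$ times a constant depending only on $\|r_{1,2}\|_{L^\infty}$ and $\|kr_{1,2}\|_{L^\infty}$.

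The decisive term is the third one, which requires a Lipschitz estimate on $\nu_+^{(2)}-\tilde\nu_+^{(2)}$ (and, after differentiating in $x$, on $\partial_x(\nu_+^{(2)}-\tilde\nu_+^{(2)})$), uniform for $x\in(x_0,+\infty)$ and measured in the weighted $L^2_k$ norms. I would obtain this from the integral equation (\ref{1125}) for $G=M(I-S_+)$ and its tilde-counterpart $\tilde G$: writing $G=(I-\P_S^-)^{-1}F$ and using the resolvent identity
\[
G-\tilde G=(I-\P_S^-)^{-1}(F-\tilde F)+(I-\P_S^-)^{-1}\P^-\!\big(\tilde G(S-\tilde S)\big),
\]
the uniform operator bound (\ref{92}) of Lemma~\ref{3.3}, valid for both $S$ and $\tilde S$ because $\|r_{1,2}\|_{L^\infty},\|\tilde r_{1,2}\|_{L^\infty}<1$, reduces the estimate to controlling $\|F-\tilde F\|_{L^2_k}$ and $\|S-\tilde S\|$, both of which depend linearly, hence Lipschitz-continuously, on $r_{1,2}-\tilde r_{1,2}$ through the Plemelj estimates of the preceding proposition. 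This is the step I expect to be the main obstacle: one must propagate the resolvent bound through the $\langle x\rangle$-weighted and the $\partial_x$-differentiated forms of (\ref{1125}) while keeping the constants dependent only on $\|r_{1,2}\|_{L^\infty}$ and $\|kr_{1,2}\|_{L^\infty}$. Collecting the three bounds and recalling that $\bar q(-x)$ recovers $q$ on $\R^-$ then yields the $\R^-$ analogue of (\ref{wiw}), namely $\|q-\tilde q\|_{H^{1,1}(\R^-)}\le c(\|r_1-\tilde r_1\|_{\H(\R)}+\|r_2-\tilde r_2\|_{\H(\R)})$, and hence the claimed Lipschitz continuity.
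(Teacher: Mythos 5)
Your proposal is correct and follows essentially the same route as the paper: the paper proves Lemma~\ref{4.9} simply by invoking the same procedure as Lemma~\ref{3.7}, i.e., the three-term decomposition of $\bar q(-x)-\overline{\tilde q}(-x)$ based on the reconstruction formula (\ref{134}), with the free Fourier term, the term carrying $r_1-\tilde r_1$, and the term carrying $\nu_+^{(2)}-\tilde\nu_+^{(2)}$, which is exactly your splitting. If anything, you supply more detail than the paper does anywhere --- notably the resolvent identity $G-\tilde G=(I-\P_S^-)^{-1}(F-\tilde F)+(I-\P_S^-)^{-1}\P^-\bigl(\tilde G(S-\tilde S)\bigr)$ combined with the operator bound (\ref{92}) to handle the difference of RH solutions --- a step the paper leaves implicit in both Lemma~\ref{3.7} and Lemma~\ref{4.9}.
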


We summary the results in Lemma  \ref{3.6} to Lemma  \ref{4.9}, we have the following proposition

\begin{proposition} \label{prop4.1}
Let $r_{1,2}(k)\in \H(\R)$ satisfying $|r_{1,2}(k)|<1$, then
$q (x) \in H^{1,1}(\R )$ and
\begin{equation}
    \| q\|_{H^{1,1}(\R^+)}\leq c(  \| r_1\|_{\H(\R)}+\|r_2\|_{\H(\R)}).
\label{104}\end{equation}
More the mapping
\begin{equation}
\H(\R)\ni (r_1,r_2)\mapsto q\in H^{1,1}(\R )
\end{equation}
is Lipschitz continuous.
\end{proposition}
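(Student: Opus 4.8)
The plan is to synthesize the four preceding lemmas, so the only genuine content is the passage from half-line to full-line statements. The structural fact I would use is that the weighted Sobolev norm splits additively across the origin: because both the $L^{2,1}$ norm and the $L^2$ norm of the derivative are defined through integrals over $\R$, one has
\[
\| q\|_{H^{1,1}(\R)}^2=\| q\|_{H^{1,1}(\R^+)}^2+\| q\|_{H^{1,1}(\R^-)}^2,
\]
whenever $q$ is a single function on $\R$ whose weak derivative exists across $x=0$. First I would invoke Lemma \ref{3.6} and Lemma \ref{4.8} to conclude that the restrictions of $q$ to $\R^+$ and $\R^-$ belong to $H^{1,1}(\R^+)$ and $H^{1,1}(\R^-)$ respectively, each controlled by $c\bigl(\|r_1\|_{\H(\R)}+\|r_2\|_{\H(\R)}\bigr)$; adding the two squared bounds then delivers $q\in H^{1,1}(\R)$ with the asserted estimate.

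The one point deserving care is that separate membership in $H^{1,1}(\R^+)$ and $H^{1,1}(\R^-)$ does not by itself force $q\in H^1(\R)$, since a jump at the origin would spoil the global weak derivative. I would resolve this by noting that $q$ is not built piecewise: it is produced by the single reconstruction formula (\ref{103}) (equivalently (\ref{134})), valid for every $x\in\R$. As the RH solution $M_-(x;k)$, and hence its entries $\mu_-^{(1)}(x;k)$ and $\nu_+^{(2)}(x;k)$, depends continuously on $x$, the reconstructed $q$ is continuous across $x=0$; the two half-line weak derivatives therefore glue into one weak derivative on $\R$ with no boundary contribution.

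For the Lipschitz statement I would argue identically, applying Lemma \ref{3.7} on $\R^+$ and Lemma \ref{4.9} on $\R^-$ to the difference $q-\tilde q$ of the potentials associated with $(r_1,r_2)$ and $(\tilde r_1,\tilde r_2)$, and then summing the half-line estimates
\[
\|q-\tilde q\|_{H^{1,1}(\R^{\pm})}\leq c\bigl(\|r_1-\tilde r_1\|_{\H(\R)}+\|r_2-\tilde r_2\|_{\H(\R)}\bigr)
\]
to obtain the full-line bound, with a constant depending only on $\|r_{1,2}\|_{L^{\infty}}$ and $\|kr_{1,2}\|_{L^{\infty}}$.

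Since every analytic ingredient is already in place from Lemmas \ref{3.6}--\ref{4.9}, I do not anticipate a real obstacle; the work is purely the bookkeeping of combining the two half-line estimates together with the continuity-at-the-origin observation that licenses treating the two restrictions as one global $H^{1,1}(\R)$ function. I would also flag that the bound labelled (\ref{104}) in the statement should read $\|q\|_{H^{1,1}(\R)}$ on the left rather than $\|q\|_{H^{1,1}(\R^+)}$, which appears to be a transcription slip.
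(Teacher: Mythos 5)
Your proposal is correct and follows essentially the same route as the paper: the paper's own justification of Proposition \ref{prop4.1} is precisely a one-line summary combining the half-line results of Lemmas \ref{3.6}--\ref{4.9}, which is what you do. Your additional care about continuity of the reconstructed potential at $x=0$ (needed so the two half-line memberships glue into a global weak derivative) and your flag that the left side of (\ref{104}) should read $\|q\|_{H^{1,1}(\R)}$ are both sound refinements that the paper silently omits, but they do not change the structure of the argument.
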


\section{ Existence of global solutions}
\label{sec:section5}

\subsection{Time evolution of scattering data }

In  Section \ref{sec:section2}  to  Section \ref{sec:section5},   for  initial data   $q(0,x)\in H^{1,1}(\mathbb{R})$,  we   consider spatial spectral problem (\ref{spectral}) and
 obtain its   a unique solution
\begin{align}
  & \varphi_\pm(0,x; k)\rightarrow e_1,  \  \  \phi_\pm(0,x; k)\rightarrow e_2,   \ \ x \rightarrow  \pm\infty,  \label{pw1}
\end{align}
which cannot satisfy time spectral problem (\ref{3}) since they  are short of a function about  time $t$.
For  every $t\in [0,T]$, we define  the normalized Jost functions  of the Lax pair (\ref{spectral}) and (\ref{3})
\begin{align}
&\varphi_{\pm}(t,x;k)=\varphi_{\pm}(0,x;k)e^{-2ik^2t}, \label{time1}\\
 &\phi_{\pm}(t,x;k)=\phi_{\pm}(0,x;k)e^{2ik^2t},\label{time2}
\end{align}
with the potential $q(0,x)\in H^{1,1}(\R)$.
It follows that for every $t\in[0,T]$, we have
$$\varphi_{\pm}(t,x;k)\rightarrow e^{-2ik^2t}e_1 \quad x\rightarrow\pm\infty,$$
$$\phi_{\pm}(t,x;k)\rightarrow e^{2ik^2t}e_2 \quad x\rightarrow\pm\infty.$$
Repeating the analysis as  the proof of Lemma \ref{2.1}, we prove that there exist unique solutions of the Volttera's integral equation for  Jost functions $\varphi_{\pm}(t,x;k)$ and $\phi_{\pm}(t,x;k)$, and the  Jost functions $\varphi_{\pm}(t,x;k)$ and $\phi_{\pm}(t,x;k)$ admit the same
 analytic property as $\varphi_{\pm}(0,x;k)$ and $\phi_{\pm}(0,x;k)$.

As well,   for every $(x,t)\in\R\times\R^+$ and every $k\in\R$,  the Jost functions $\varphi_\pm(t,x;k)$ and $ \phi_\pm(t,x;k)$  should satisfy  the  scattering  relation
\begin{equation}
\begin{split}
    &\varphi_-(t,x;k)=a(t;k)\varphi_+(t,x;k)+b(t;k)e^{2ikx }\phi_+(t,x;k),\\
    &\phi_-(t,x;k)=c(t;k)e^{-2ikx }\varphi_+(t,x;k)+d(t;k)\phi_+(t,x;k).
\end{split}
\nonumber\end{equation}
By Crammer's law and  evolution relation  (\ref{time1})-(\ref{time2}),  we obtain evolution   of   the scattering coefficients
\begin{equation}
\begin{split}
&a(t;k)=W(\varphi_-(0,0;k)e^{-2ik^2t},\phi_+(0,0;k)e^{ 2ik^2t})=a(0;k),\\
&b(t;k)=W(\varphi_+(0,0;k)e^{-2ik^2t},\varphi_-(0,0;k)e^{-2ik^2t}) = b(0;k) e^{-4ik^2t},\\
&d(t;k)=W(\phi_+(0,0;k)e^{2ik^2t}, \phi_-(0,0;k)e^{-2ik^2t})=d(0;k).
\end{split}\nonumber
\end{equation}

Direct calculation shows that   reflection scattering coefficients are given by
\begin{align}
& r_1(t;k)=\frac{b(t;k) }{a(t;k)}=\frac{b(0;k)}{a(0;k)}e^{-4ik^2t}=r_{1}(0;k)e^{-4ik^2t},\label{eeew}\\
& r_2(t;k)=\frac{b(t;-k) }{d(t;k)}=\frac{\overline{b(-k)}}{d(k)}e^{4ik^2t}=r_{2}(0;k)e^{4ik^2t},
\end{align}
where $r_{1,2}(0;k)$ are initial reflection data found from the initial data $q(0,x)$.

\begin{proposition}
If $r_{1,2}(0;k)\in \H(\R)$, then for a fixed $T>0$ and every $t\in[0,T]$, we have $r_{1,2}(t;k)\in \H(\R).$
\end{proposition}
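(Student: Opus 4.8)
The plan is to exploit that the time flow acts on each reflection coefficient only by multiplication by the unimodular factor $e^{\mp 4ik^2t}$, as recorded in (\ref{eeew}). Since $|e^{\mp 4ik^2t}|=1$ for all $k\in\R$ and $t\in[0,T]$, we have $|r_{1,2}(t;k)|=|r_{1,2}(0;k)|$ pointwise, so every part of the $\H(\R)$-norm that does not involve a $k$-derivative is literally invariant: $\|r_{1,2}(t;\cdot)\|_{L^2}=\|r_{1,2}(0;\cdot)\|_{L^2}$, $\|r_{1,2}(t;\cdot)\|_{L^{2,1}}=\|r_{1,2}(0;\cdot)\|_{L^{2,1}}$, and $\|k r_{1,2}(t;k)\|_{L^\infty}=\|k r_{1,2}(0;k)\|_{L^\infty}$. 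Thus the only quantity requiring an estimate is the $L^2$-norm of $\partial_k r_{1,2}(t;\cdot)$.

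First I would differentiate. Writing $r_j(t;k)=r_j(0;k)e^{\mp 4ik^2t}$ and applying the product rule gives
\begin{equation}
\partial_k r_j(t;k)=e^{\mp 4ik^2t}\,\partial_k r_j(0;k)\mp 8ikt\,e^{\mp 4ik^2t}\,r_j(0;k),
\nonumber
\end{equation}
whence the pointwise bound
\begin{equation}
|\partial_k r_j(t;k)|\leq |\partial_k r_j(0;k)|+8t\,|k|\,|r_j(0;k)|.
\nonumber
\end{equation}
The first term lies in $L^2(\R)$ because $r_j(0;\cdot)\in H^1(\R)$. For the second term I would invoke the elementary embedding $L^{2,1}(\R)\hookrightarrow\{r:kr\in L^2(\R)\}$, namely $\|k r_j(0;\cdot)\|_{L^2}\leq\|r_j(0;\cdot)\|_{L^{2,1}}$ since $k^2\leq\langle k\rangle^2$; combined with $t\leq T$ this yields $\|8t\,|k|\,r_j(0;\cdot)\|_{L^2}\leq 8T\,\|r_j(0;\cdot)\|_{L^{2,1}}$.

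Summing the two contributions shows $\partial_k r_j(t;\cdot)\in L^2(\R)$ uniformly on $[0,T]$, hence $r_j(t;\cdot)\in H^1(\R)$, and together with the three invariant norms above this gives $r_{1,2}(t;\cdot)\in\H(\R)$ for every $t\in[0,T]$, with a uniform bound of the form $\|r_{1,2}(t;\cdot)\|_{\H}\leq (1+8T)\|r_{1,2}(0;\cdot)\|_{\H}$. I do not anticipate any real obstacle: the single place where the hypotheses are genuinely used is the derivative estimate, where the linear weight $|k|$ produced by differentiating the quadratic phase is absorbed exactly by the $L^{2,1}$-weight built into $\H(\R)$, and the compactness of the time interval $[0,T]$ is what keeps the constant finite and independent of $t$.
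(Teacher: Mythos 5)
Your proposal is correct and follows essentially the same route as the paper's own proof: both exploit that the factor $e^{\mp 4ik^2t}$ is unimodular so that the $L^{2,1}$-norm and $\|kr_{1,2}(t;\cdot)\|_{L^\infty}$ are invariant, and both handle the only nontrivial piece, $\partial_k r_{1,2}(t;\cdot)$, by the product rule, bounding the extra term $8t|k||r_{1,2}(0;k)|$ in $L^2$ by $8T\|r_{1,2}(0;\cdot)\|_{L^{2,1}}$. Your write-up merely makes the embedding step ($k^2\leq\langle k\rangle^2$) and the final uniform constant $(1+8T)$ explicit, which the paper leaves implicit.
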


\begin{proof}
From (\ref{eeew}), we obtain
\begin{equation}
\| r_{1,2}(t;\cdot) \|_{L^{2,1}(\R)}=\| r_{1,2}(0;\cdot) \|_{L^{2,1}(\R)},
\end{equation}
and using Lemma \ref{2.7},
\begin{equation}
\| kr_{1,2}(t;\cdot) \|_{L^{\infty}(\R)}=\| kr_{1,2}(0;\cdot) \|_{L^{\infty}(\R)}.
\end{equation}
For every $t\in[0,T]$, we have
\begin{equation}
\begin{split}
\| \| \partial_kr_{1}(t;\cdot) \|_{L^{2}(\R)}=&\|\partial_k r_1(0;\cdot)-8iktr_1(t;k) \|_{L^2(\R)}\\
\leq & \|\partial_k r_1(0;\cdot) \|_{L^2(\R)}+8T\|r_1(0;\cdot) \|_{L^{2,1}},
\end{split}\label{6.3}
\end{equation}
\begin{equation}
\begin{split}
\| \| \partial_kr_{2}(t;\cdot) \|_{L^{2}(\R)}=&\|\partial_k r_2(0;\cdot)+8iktr_2(t;k) \|_{L^2(\R)}\\
\leq & \|\partial_k r_2(0;\cdot) \|_{L^2(\R)}+8T\|r_2(0;\cdot) \|_{L^{2,1}}.
\end{split}\label{6.4}
\end{equation}
Thus we infer  that $r_{1,2}(t;\cdot)\in \H(\R)$ for every $t\in[0,T]$ as $r_{1,2}(0;\cdot)\in \H(\R)$.
\end{proof}

\subsection{Local and global solution }
In this section, we will prove the existence of the local solution and global solution to the Cauchy problem (\ref{nnls})-(\ref{initial}).
The scheme behind the proof of the existence of local and global solution can be described as below

\begin{center}
\begin{tikzpicture}
\node at (-4.0, 1.0 ){\fontsize{8pt}{8pt} $q(0,x)\in H^{1,1}(\mathbb{R}) $};
\draw [->] (-2.7,1.0)--(2.8,1.0);
\node at (4.0, 1.0 ){\fontsize{8pt}{8pt} $r_{1,2}(0;k)\in \mathcal{H}(\mathbb{R}) $};

\node at (-4.0, -1.0 ){\fontsize{8pt}{8pt} $q(t,x) \in C([0,\infty),H^{1,1}(\R)) $};
\draw [<-] (-2,-1.0)--(3,-1.0);
\node at (4.2, -1.0 ){\fontsize{8pt}{8pt} $r_{1,2}(t;k) \in \mathcal{H}(\mathbb{R}) $};

\draw [-] (4,0.8)--(4,-0.6);\draw [->] (4,-0.6)--(4,-0.8);
\draw [dashed] (-4,0.8)--(-4,-0.6);\draw [->] (-4,-0.6)--(-4,-0.8);

\node at (0, 1.2 ){\footnotesize Lipschitz};
\node at (0, -1.2 ){\footnotesize Lipschitz};
\end{tikzpicture}
\center\footnotesize{\textbf{Fig.  1} The general scheme  }
\end{center}
\begin{theorem}    Let   the initial data  $q_0(x)\in H^{1,1}(\R)$  nd $\|q_0 \|_{L^1(\R)}<\frac16$,
then there exists a unique local solution  to the Cauchy problem (\ref{nnls})-(\ref{initial})
\begin{align}
q(t,x)\in C([0,T],H^{1,1}(\R)),\ t\in[0,T].
\end{align}
Furthermore, the map
\begin{equation}
  H^{1,1}(\mathbb{R}) \ni q_{0}(x) \mapsto q(t,x) \in C\left([0, T], H^{1,1}(\mathbb{R})\right)
\end{equation}
is Lipschitz continuous.
\end{theorem}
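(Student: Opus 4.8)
The plan is to realize the data-to-solution map as the composition of the three Lipschitz maps displayed in Fig.~1, and then to verify separately that the reconstructed potential genuinely solves the equation. First I would run the direct scattering transform: since $q_0\in H^{1,1}(\R)$ with $\|q_0\|_{L^1(\R)}<\tfrac16$, Lemma~\ref{2.77} ensures $a$ and $d$ have no zeros, so $r_{1,2}(0;\cdot)$ are well defined, and Lemma~\ref{2.7} places them in $\H(\R)$ with $|r_{1,2}(0;k)|<1$, the map $q_0\mapsto(r_1(0;\cdot),r_2(0;\cdot))$ being Lipschitz. Next I would impose the explicit time evolution $r_1(t;k)=r_1(0;k)e^{-4ik^2t}$, $r_2(t;k)=r_2(0;k)e^{4ik^2t}$. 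The crucial observation is that $|e^{\mp 4ik^2t}|=1$ for $k\in\R$, so $|r_{1,2}(t;k)|<1$ is preserved for every $t$, while the estimates (\ref{6.3})--(\ref{6.4}) bound $\|r_{1,2}(t;\cdot)\|_{\H(\R)}$ uniformly on $[0,T]$; hence the hypotheses of the inverse transform hold at every time. Finally, for each fixed $t$ I would invoke Proposition~\ref{prop4.1} to reconstruct $q(t,\cdot)\in H^{1,1}(\R)$ with $\|q(t,\cdot)\|_{H^{1,1}(\R)}\le c(\|r_1(t)\|_{\H}+\|r_2(t)\|_{\H})$, again by a Lipschitz map.

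To secure membership in $C([0,T],H^{1,1}(\R))$ I would show that $t\mapsto r_{1,2}(t;\cdot)$ is continuous into $\H(\R)$. As $t'\to t$, the differences $r_{1,2}(0;k)(e^{\mp4ik^2t'}-e^{\mp4ik^2t})$ tend to zero in $L^2\cap L^{2,1}$ by dominated convergence, while the weighted piece of $\partial_k r_{1,2}(t;\cdot)$, which contains the term $\mp 8ikt\,r_{1,2}(0;k)e^{\mp4ik^2t}$, is controlled in $L^2$ using $r_{1,2}(0;\cdot)\in L^{2,1}(\R)$ and $\partial_k r_{1,2}(0;\cdot)\in L^2(\R)$. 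Composing this continuous curve with the Lipschitz inverse map of Proposition~\ref{prop4.1} yields $q\in C([0,T],H^{1,1}(\R))$. The Lipschitz continuity of the data-to-solution map then follows at once, since $q_0\mapsto q(t,\cdot)$ is a composition of the Lipschitz direct map, the time-evolution map whose Lipschitz constant is bounded by the uniform factor from (\ref{6.3})--(\ref{6.4}) on $[0,T]$, and the Lipschitz inverse map; taking the supremum over $t\in[0,T]$ produces the bound in the $C([0,T],H^{1,1}(\R))$ norm.

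The main obstacle is verifying that the reconstructed $q(t,x)$ actually satisfies the nonlocal NLS equation (\ref{nnls}), rather than merely lying in the correct space. For this I would return to the RH problem with jump $S(x,t;k)$ built from the time-evolved coefficients and show that its solution $M(x,t;k)$ satisfies both parts of the Lax pair (\ref{spectral})--(\ref{3}); the zero-curvature compatibility $U_t-V_x+[U,V]=0$ then forces $q$ to solve (\ref{nnls}). To justify the $x$- and $t$-differentiation of the reconstruction formulas (\ref{103}) and (\ref{134}) under the weak $H^{1,1}$ regularity, I would first carry out the computation for Schwartz-class initial data, where classical inverse scattering theory legitimizes every manipulation, and then pass to general $q_0\in H^{1,1}(\R)$ by density together with the Lipschitz bounds already established.

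For uniqueness I would exploit the bijectivity of the scattering transform. Any solution in $C([0,T],H^{1,1}(\R))$ issuing from $q_0$ satisfies $\|q(t,\cdot)\|_{L^1(\R)}<\tfrac16$ for $t$ in a (possibly shorter) interval by continuity and the embedding $H^{1,1}(\R)\hookrightarrow L^1(\R)$, so the direct transform applies and yields reflection coefficients with $|r_{1,2}|<1$ that necessarily evolve by the explicit formula $r_{1,2}(t;k)=r_{1,2}(0;k)e^{\mp4ik^2t}$; since the inverse map is single-valued, such a solution must coincide with the one constructed above, and a standard continuation covers all of $[0,T]$.
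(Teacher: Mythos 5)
Your proposal is correct and follows the same overall route as the paper's proof: the Fig.~1 scheme of composing the Lipschitz direct map $q_0\mapsto r_{1,2}(0;\cdot)$ (Lemmas \ref{2.77} and \ref{2.7}), the explicit time evolution $r_{1,2}(t;k)=r_{1,2}(0;k)e^{\mp 4ik^2t}$ with the bounds (\ref{6.3})--(\ref{6.4}), and the Lipschitz inverse map of Proposition \ref{prop4.1}, reproducing exactly the chain of estimates (\ref{158}). The differences are ones of completeness, and they cut in your favor. First, the paper never verifies that the reconstructed $q(t,x)$ actually solves the nonlocal NLS equation; you correctly single this out as the main outstanding obligation and sketch the standard remedy (a dressing/zero-curvature argument for the time-dependent RH problem, justified first for Schwartz data and extended by density together with the Lipschitz bounds). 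Second, the paper asserts uniqueness without argument, whereas you derive it from bijectivity of the scattering transform and the forced evolution law of the reflection coefficients. Third, your continuity-in-time argument (dominated convergence of $r_{1,2}(t;\cdot)$ in the $L^2$-based pieces of $\H(\R)$) is sounder than the paper's, which claims the Lipschitz rate $\|r_{1,2}(t+\Delta t;\cdot)-r_{1,2}(t;\cdot)\|_{\H}\le c\,|\Delta t|$; that rate would require $k^2r_{1,2}(0;k)\in L^2(\R)$, which membership in $\H(\R)$ does not provide (interpolating $|e^{\mp 4ik^2\Delta t}-1|\le\min(2,4k^2|\Delta t|)$ against $\|kr_{1,2}\|_{L^2}$ only yields a H\"{o}lder-$\tfrac12$ rate). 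One caveat applies equally to you and to the paper: pushing time continuity through the inverse map requires continuity of $t\mapsto kr_{1,2}(t;k)$ in $L^{\infty}(\R)$ as well, which does not follow from boundedness of $kr_{1,2}$ alone; it can be repaired by observing that $kb(k)\to 0$ as $|k|\to\infty$ (Riemann--Lebesgue applied to the representation (\ref{3.62}), whose integrands lie in $L^1$ for $q\in H^{1,1}(\R)$), so the tails are uniformly small. With that small repair your argument goes through and, if anything, fills gaps in the published proof rather than leaving any.
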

\begin{proof}
The constraint satisfying $|r_{1,2}(k)|<1$  remains valid for $r_{1,2}(t;k)$ for every $t\in[0,T]$.
Performing a similar analysis as Lemma \ref{3.6}-\ref{4.9},  we can establish a RH problem for $r_{1,2}(t,x;k)$ for every $t\in[0,T]$ and address the existence and uniqueness of the solution to the RH problem. Further, the potential q(t,x) can be recovered from the reflection coefficients $r_{1,2}(t,x;k)$. Moreover, the potential q(t,x) belongs to $H^{1,1}(\R)$ for every $t\in[0,T]$ and is Lipschitz continuous of $r_{1,2}(t;k)$. Thus we have
\begin{equation}
\begin{split}
    \| q(t;\cdot) \|_{H^{1,1}} &\leq c_1(\|r_1(t;\cdot)\|_{\H}+\|r_2(t;\cdot)\|_{\H })\\
    &\leq c_2(r_1(0;\cdot)\|_{\H}+\|r_2(0;\cdot)\|_{\H })   \leq c_3 \|q_0 \|_{H^{1,1}},
\end{split}\label{158}
\end{equation}
where the positive constants $c_1$,$c_2$ and $c_3$ depends on $T, \| r_{1,2}\|_{L^{\infty}}$ and $  \|kr_{1,2}\|_{L^{\infty}} $.

Next we show $u(t,x )$ is continuous  with respect to   every $t\in[0,T]$ under $H^{1,1}(\mathbb{R})$ norm.
Let $t\in \in[0,T]$ and $|\Delta t|<1$ such that $t+\Delta t \in [0,T]$,  then with the Lipschitz continuity from $u(t,x)$ to $r_{1,2}(t;z)$ in Proposition \ref{prop4.1}, we
have
\begin{equation}
\begin{aligned}\nonumber
&\|u(t+\Delta t, x  )-u( t, t+\Delta t  )\|_{H^3\cap H^{2,1}} \nonumber\\
&\leq    c(  \| r_1(t+\Delta t, x  )-r_1( t,x  ) \|_{\H(\R)}+ \| r_2(t+\Delta t, x  )-r_2( t, x  ) \|_{\H(\R))})\\
&\leq c|\Delta t| ( \|r_{1 }(z)\|_{\H }+  \|r_{2 }(z)\|_{\H }) \leq c |\Delta t| \rightarrow 0, \ \Delta t \to 0,
\end{aligned}
\end{equation}
which together with the estimate (\ref{158}) implies that there exists a unique local solution $q(t,x)\in C([0,T],H^{1,1}(\R)),\ t\in[0,T]$ to the Cauchy problem (\ref{nnls})-(\ref{initial} and the map
 $$H^{1,1}(\mathbb{R}) \ni q_{0}(x) \mapsto q(t,x) \in C\left([0, T], H^{1,1}(\mathbb{R})\right)$$
 is Lipschitz continuous.
 \end{proof}
The following theorem shows that there exists a global solution in $H^{1,1}(\R)$:
\begin{theorem}    Let   the initial data  $q_0(x)\in H^{1,1}(\R)$  and $\|q_0 \|_{L^1(\R)}<\frac16$,
then there exists a unique global solution  to the Cauchy problem (\ref{nnls})-(\ref{initial})
\begin{align}
q(t,x)\in C([0,\infty),H^{1,1}(\R)).
\end{align}
Furthermore, the map
\begin{equation}
  H^{1,1}(\mathbb{R}) \ni q_{0}(x) \mapsto q(t,x) \in C\left([0, \infty);  H^{1,1}(\mathbb{R})\right)
\end{equation}
is Lipschitz continuous.
\end{theorem}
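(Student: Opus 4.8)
The plan is to upgrade the local existence theorem to a global one by exploiting the fact that the time dependence of the scattering data is completely explicit and, crucially, enters only through the unimodular phase $e^{\mp 4ik^2t}$. Consequently the ``local'' construction of the previous theorem is in fact available on \emph{every} interval $[0,T]$ with $T$ arbitrary, and the uniqueness of the local solution allows these pieces to be glued into a single solution on $[0,\infty)$.

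First I would fix an arbitrary $T>0$ and record that $|r_{1,2}(t;k)|=|r_{1,2}(0;k)|<1$ for all $t\in[0,T]$ and all $k\in\R$, since $|e^{\mp4ik^2t}|=1$ on the real axis. Thus the admissibility constraint required by Lemma \ref{5.1}, Lemma \ref{5.2} and Lemma \ref{th5.1} persists for every $t$, so the RH problem remains solvable and the reconstruction of Proposition \ref{prop4.1} applies at each time. Next, by the time-evolution proposition of this section, $r_{1,2}(t;\cdot)\in\H(\R)$ for every $t\in[0,T]$, with the estimates (\ref{6.3})--(\ref{6.4}) giving the quantitative bound
\begin{equation}
\|r_{1,2}(t;\cdot)\|_{\H(\R)}\le \|r_{1,2}(0;\cdot)\|_{\H(\R)}+8T\,\|r_{1,2}(0;\cdot)\|_{L^{2,1}(\R)},
\nonumber
\end{equation}
which is finite for every finite $T$. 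Feeding this into the reconstruction estimate (\ref{104}) yields $q(t,\cdot)\in H^{1,1}(\R)$ together with an a priori bound $\|q(t,\cdot)\|_{H^{1,1}(\R)}\le C(T)<\infty$ depending only on $T$, $\|r_{1,2}(0;\cdot)\|_{L^{\infty}}$ and $\|kr_{1,2}(0;\cdot)\|_{L^{\infty}}$. Continuity in $t$ under the $H^{1,1}(\R)$ norm and Lipschitz dependence on the data then follow verbatim from the local theorem together with the Lipschitz continuity of the inverse map in Proposition \ref{prop4.1}.

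The main obstacle a global statement must confront is the possibility of finite-time blow-up, and here I expect ruling it out to be essentially automatic. Because the only time dependence is the modulus-one factor $e^{\mp4ik^2t}$, the quantities $\|r_{1,2}(t;\cdot)\|_{L^{\infty}}$ and $\|kr_{1,2}(t;\cdot)\|_{L^{\infty}}$ are \emph{exactly} time-independent (by (\ref{eeew}) and Lemma \ref{2.7}), so the reconstruction and Lipschitz constants produced by Proposition \ref{prop4.1} do not degenerate as $t$ grows. The only norm that grows is $\|\partial_k r_{1,2}(t;\cdot)\|_{L^2}$, and it grows at most linearly in $t$ because $\partial_k$ lands on the phase and produces a factor proportional to $kt$, which is controlled by $\|r_{1,2}(0;\cdot)\|_{L^{2,1}}$. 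Hence no singularity can form on any bounded interval.

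Finally I would let $T\to\infty$. Since each local solution on $[0,T]$ is unique, the solutions constructed on $[0,T_1]$ and $[0,T_2]$ coincide on $[0,\min(T_1,T_2)]$; defining $q(t,x)$ on $[0,\infty)$ as their common extension gives $q\in C([0,\infty),H^{1,1}(\R))$, and the uniform Lipschitz estimates on each $[0,T]$ transfer to Lipschitz continuity of the global solution map $q_0\mapsto q$. This completes the argument.
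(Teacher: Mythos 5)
Your proof is correct, but it is organized differently from the paper's. The paper runs a maximal-time (blow-up alternative) argument: it supposes the local solution has a maximal existence time $T_{max}$ and derives a contradiction both in the closed-interval case (restarting the scattering transform with $q(T_{max},\cdot)$ as new initial data) and in the open-interval case (using the a priori bound (\ref{158}) to extend the solution continuously up to $t=T_{max}$). You instead observe that the ``local'' theorem is already global in disguise: since the time evolution of the reflection coefficients is the unimodular phase $e^{\mp 4ik^{2}t}$, the constraint $|r_{1,2}(t;k)|=|r_{1,2}(0;k)|<1$ and the norms $\|r_{1,2}(t;\cdot)\|_{L^{\infty}}$, $\|kr_{1,2}(t;\cdot)\|_{L^{\infty}}$ are exactly conserved, while $\|\partial_k r_{1,2}(t;\cdot)\|_{L^{2}}$ grows at most linearly in $t$ by (\ref{6.3})--(\ref{6.4}); hence the RH problem is solvable and the reconstruction of Proposition \ref{prop4.1} applies on every $[0,T]$ with constants controlled by $T$ and the initial data, and uniqueness lets you glue the resulting solutions as $T\to\infty$. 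Both arguments rest on the same a priori estimate, but your route buys something real: it never re-initializes the direct scattering transform at a positive time, whereas the paper's closed-interval case implicitly needs the small-norm hypothesis $\|q(T_{max},\cdot)\|_{L^{1}(\R)}<\frac16$ to redo the analysis of Sections 2--4 from the new initial data, and that smallness is not propagated by the flow --- a gap your argument avoids entirely. What the paper's continuation scheme buys in exchange is robustness: it is the standard template that would still function if the time evolution of the scattering data were not explicit and only a weaker, non-quantitative a priori bound were available. One caveat common to both proofs: Lipschitz continuity of $q_0\mapsto q$ into $C([0,\infty);H^{1,1}(\R))$ should be read as Lipschitz on each compact time interval, since the constants $c(T)$ grow with $T$.
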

\begin{proof}
Suppose the maximal time in which the local solution exists is $T_{max}$.

If $T_{max}=\infty$, then the local solution is global.

If the local solution exists in the closed interval $[0,T_{max}]$, we can use $q(T_{max},x)\in H^{1,1}(\R)$ as a new initial data. By a similar analysis as the previous sections, there exists a positive constant $T_1$ such that the solution $q(t,x)\in C([T_{max},T_{max}+T_1], H^{1,1}(\R))$ exists. This contradicts with the maximal time assumption.

If the local solution exists in the open interval $[0,T_{max})$. According to (\ref{158}), we have
\begin{equation}
\|q(t,x) \|_{H^{1,1}(\R)}\leq c_3(T_{max})\|q_0 \|_{H^{1,1}(\R)},\quad t\in[0,T_{max}).
\end{equation}
 Due to the continuity of $q(t,x)$ to the time $t$, the limit of $q(t,x)$ as $t$ approaches to $T_{max} $ exists. Let $ q_{max}(x):=\lim_{t\rightarrow T_{max}} q(t,x)$. Taking the limit by $t\rightarrow T_{max}$ in  (\ref{158}), we have
 \begin{equation}
 \|q_{max} \|_{H^{1,1}(\R)}\leq  c_3(T_{max})\|q_0 \|_{H^{1,1}(\R)},
 \end{equation}
 which implies that we can extend the local solution $q(t,x)\in C([0,T_{max}),H^{1,1}(\R))$ to $q(t,x)\in C([0,T_{max}],H^{1,1}(\R))$, this contradicts with the premise that $[0,T_{max})$ is the maximal open interval.
\end{proof}

\end{document}